\numberwithin{equation}{section}
\newtheorem{thm}{Theorem}[section]
\newtheorem{Lem}{Lemma}[section]
\newtheorem{Rem}{Remark}[section]
\newtheorem{Exm}{Example}[section]
\newcommand{\bea}{\begin{eqnarray}}
	\newcommand{\eea}{\end{eqnarray}}
\newcommand{\beq}{\begin{equation}}
	\newcommand{\eeq}{\end{equation}}
\newcommand{\beas}{\begin{eqnarray*}}
	\newcommand{\eeas}{\end{eqnarray*}}
\newcommand{\half}{{\frac{1}{2}}}
\newcommand{\xb}{\color{blue}}
\title{Error analysis with exponential decay estimates for a fully discrete approximation of 
a class of strongly damped wave equations}
\author{Krishan Kumar, P. Danumjaya, Anil Kumar  and Amiya K. Pani\\ %\thanks{
Department of Mathematics, Birla Institute of Technology and Science, Pilani\\
K K Birla Goa Campus, Zuarinagar, Sancoale, Goa-403726, India.\\ 
Email: krishan1624@gmail.com,
danu@goa.bits-pilani.ac.in; \\
anilpundir@goa.bits-pilani.ac.in and 
amiyap@goa.bits-pilani.ac.in} 
\date{}
\begin{document}
\maketitle
\begin{abstract}
This paper deals with the asymptotic behavior and FEM error analysis of a class of strongly damped wave equations using a semidiscrete finite element method in spatial directions combined with a finite difference scheme in the time variable. For the continuous problem under weakly and strongly damping parameters $\alpha$ and $\beta,$ respectively, a novel approach usually used for linear parabolic problems is employed to derive an exponential decay property with explicit rates, which depend on model parameters and the principal eigenvalue of the associated linear elliptic operator for the different cases of parameters such as $(i) \;\alpha, \beta >0$, $ (ii)\; \alpha>0, \beta \geq 0$ and $(iii)\;\alpha \geq 0, \beta >0$. Subsequently, for a semi-discrete finite element scheme keeping the temporal variable continuous, optimal error estimates are derived that preserve exponential decay behavior. Some generalizations that include forcing terms and spatially as well as time-varying damping parameters are discussed. Moreover, an abstract discrete problem is discussed, and as a consequence, uniform decay estimates for finite difference as well as spectral approximations to the damped system are briefly indicated. A complete discrete scheme is developed and analyzed after applying a finite difference scheme in time, which again preserves the exponential decay property. The given proofs involve several energies with energy-based techniques to derive the consistency between continuous and discrete decay rates, in which the constants involved do not blow up as $\alpha\to 0$ and $\beta\to 0$. Finally, several numerical experiments are conducted whose results support the theoretical findings, illustrate uniform decay rates, and explore the effects of parameters on stability and accuracy. \\\\
{\bf Keywords:} Strongly damped wave equation, uniform decay estimates, Galerkin finite elements, optimal error estimates, numerical experiments. \\\\
{\bf Mathematics Subject Classification.} 65M60, 65M15, 35L20.
\end{abstract}
%%%%%%%%%%%%%%%%%%%%%%%%%%%%%%%%%%%%%%%%%%%%%%%%%%%%%%%%%%%%%%%%%%%%%%%%%
%%%%%%%%%%%%%%%%%%%%%%%%%%%%%%%%%%%%%%%%%%%%%%%%%%%%%%%%%%%%%%%%%%%%%%%%%	

%%%%%%%%%%%%%%%%%%%%%%%%%%%%%%%%%%%%%%%%%%%%%%%%%%%%%%%%%%%%%%%%%%%%%%%%%
%%%%%%%%%%%%%%%%%%%%%%%%%%%%%%%%%%%%%%%%%%%%%%%%%%%%%%%%%%%%%%%%%%%%%%%%%	
\section{Introduction}
%%%%%%%%%%%%%%%%%%%%%%%%%%%%%%%%%%%%%%%%%%%%%%%%%%%%%%%%%%%%%%%%%%%%%%%%%
%%%%%%%%%%%%%%%%%%%%%%%%%%%%%%%%%%%%%%%%%%%%%%%%%%%%%%%%%%%%%%%%%%%%%%%%
 This paper focuses on finite element error analysis with the asymptotic behavior of a class of strongly damped wave equations 
 %with a mixture of internal and external damping 
 of the following type: Find $u = u(x,t), x \in \Omega, t \geq 0$ such that
\begin{equation}\label{se1.1}
	u'' + \beta \, A u' + \alpha\,u' + A u = 0 \;\; \mbox{in} \;\;  \Omega \times (0, \infty),
\end{equation}
with boundary condition
\begin{equation}\label{se1.2}
	u(x,t) = 0 \;\; \mbox{on} \;\; \partial \Omega \times [0, \infty),
\end{equation}
and initial conditions
\begin{equation}\label{se1.3}
	u(x,0) = u_0  \;\; \mbox{and} \;\; u'(x,0) = u_1 \;\; \mbox{in} \;\; \Omega,
\end{equation}
where $\Omega$ is a bounded domain in $\mathbb{R}^d$ with boundary $\partial \Omega$, $u' := \frac{\partial u}{\partial t}$, $u'' := \frac{\partial^2 u}{\partial t^2}$ and $A := -\Delta$ with $\Delta$ denoting Laplacian. Here, $\beta$ and $\alpha$ are damping parameters with the following properties: $(i) \, \alpha>0, \;\beta>0$, $(ii) \, \alpha > 0, ~ \beta \geq 0$ or $(iii) \, \alpha \geq 0, \beta > 0$. When $\alpha>0$ and $\beta=0$, the equation \eqref{se1.1} is called weakly damped, and for $\alpha=0$ and $\beta>0$, it is called the strongly damped wave equation. Such equations and their nonlinear versions arise in a large number of applications, say, for example, in the study of noise dampening, acoustics, viscoelastic materials \cite{dell2011long, MR3476505}, railway tracks \cite{8392747}, quantum mechanics \cite{ghidaglia1991longtime}, etc., and their well-posedness with asymptotic properties; see \cite{webb1980existence, massatt1983limiting, AD1988,carvalho2008strongly, danumjaya2023asymptotic, PS2005, PZ2006} and references therein.

Earlier, Russell \cite{russell1975decay} established the explicit non-uniform decay rate for the weakly damped wave equation ($\beta = 0$) using a control-theoretic approach. Subsequently, based on the positive principal eigenvalue of the operator $A$, the decay estimate for the weakly damped wave equation $(\beta = 0, \alpha > 0)$ has been discussed in \cite[Proposition 1.2 of Chapter 4]{temam2012infinite}.  Thereafter, Rauch \cite{rauch1976qualitative} has derived the decay estimate based on the minimum and maximum of the damping coefficient $(\beta = 0, \alpha = \alpha(x) > 0)$ and the principal eigenvalue of the operator $A$. For related papers, see \cite{chen1990asymptotic, chen1979control, chen1981control} and references therein.

On numerical solution, in an excellent paper \cite{larsson1991finite}, Larsson {\it et al.}  have derived appropriate regularity results with minimal smoothness assumption on the initial data and have used these regularity results to provide optimal order of convergence for the semi-discrete finite element method combined with fully discrete scheme for \eqref{se1.1}. It is shown that the results are valid uniformly in time.  Later, T\'ebou {\it et al.} \cite{tebou2003uniform} have proposed a semi-discrete finite difference scheme for the weakly damped wave equation (i.e., $\beta = 0$ in \eqref{se1.1}) and have shown that the discrete energy decays exponentially with decay rate depending on the mesh size. But, by adding an artificial numerical viscosity term (more like a strongly damped term with $\beta$ depending on the mesh parameter), they have derived uniform exponential decay of the discrete energy in \cite{munch2007uniform}. Rincon {\it et al.} \cite{rincon2013numerical} in $2013$ have proved the existence, uniqueness, and regularity of the solution of an auxiliary locally damped wave equation that includes an artificial viscosity term. They have analyzed a semidiscrete scheme based on the finite element method and discussed a fully discrete numerical scheme using implicit time discretization.  However, the asymptotic behavior of the discrete scheme remains unresolved. In $2019$, Egger {\it et al.} \cite{egger2019uniform} have studied the weakly damped linear wave equation (i.e., $\beta = 0$ in \eqref{se1.1}) numerically by employing the mixed finite element method to preserve the uniform exponential decay of the first energy norm as well as higher order energy norms. 
%A crucial point of their proof is the energy argument and the use of the Poincar\'e type inequality. 
Recently, Danumjaya {\it et al.} \cite{danumjaya2023asymptotic} have also derived the exponential decay of the first and higher order energy norms of the solution to a weakly damped wave equation (i.e., $\beta = 0$ in \eqref{se1.1}). Based on the conforming finite element method, it is shown that the semi-discrete scheme preserves the uniform exponential decay property.  Moreover,  optimal orders of convergence that preserve the exponential rate have been established for the semi-discrete approximations. One can also refer to \cite{ljung2021generalized, zhao2023low} and references therein.

The starting point of our investigation is to prove exponential decay behavior and to establish optimal error estimates for a class of strongly damped wave equations  \eqref{se1.1}-\eqref{se1.3}. While in engineering applications related to the damped system, it is important to understand the rate of decay, which depends on the damping parameters $\alpha$ and $\beta$ of the damping materials used. This paper provides a range of decay estimates that may help in applications for choosing a material property-related damping parameter. However, the range given in this paper may not be optimal, as shown in the section on numerical experiments. For optimality, the Fourier analysis given in \cite{larsson1991finite} using eigenvalue expansion may be useful, but it may be restricted for many applications such as parameters depending on space and time; therefore, an attempt has been made in this paper to provide energy-based techniques to establish consistency between the continuous and discrete decay rates and explicit rates depend on damping parameters and also on the principal eigenvalue of the associated linear elliptic operator. The rigorous analysis and careful estimates significantly improve our results.  The highlights of our main contributions are as follows:
\begin{itemize}
\item A novel approach, usually employed for analysis of decay behavior of linear parabolic equations, is developed for deriving uniformly exponential decay estimates for the continuous problem under weakly damping parameter $\alpha $ and strongly damping parameter $\beta$ with properties $ (i)\;\alpha, \;\beta >0$, $(ii)\; \alpha >0, \;\beta \geq 0$ and $(iii) \; \alpha\geq 0,\; \beta >0.$  Further, it identifies how the decay rates depend on model parameters and the first eigenvalue of the associated linear elliptic operator. Compared with the paper by Danumjaya {\it et al.} \cite{danumjaya2023asymptotic} on a weakly damped equation, that is, $\beta=0$, the present analysis provides a better decay rate.

\item Then, a semidiscrete finite element scheme is applied keeping the time variable continuous, which preserves the exponential decay property, and it establishes optimal error estimates in $L^{\infty}(L^2(\Omega))$ and $L^{\infty}(H^1(\Omega))$-norms under minimal smoothness of the initial data, that is, $u_0\in H^3\cap H^1_0(\Omega)$ and $u_1\in H^2(\Omega),$ see \cite{danumjaya2023asymptotic, Rauch}.

\item Some generalizations are indicated, which include the forcing term, spatially as well as time-varying damping parameters. Furthermore, an abstract discrete framework is developed, and as a fallout, uniform decay estimates for finite difference as well as spectral approximations of the damped system are indicated.

\item A completely discrete scheme is analyzed after discretizing the time variable by a finite difference method, and an exponential decay estimate, which is uniform with respect to the discretizing parameters, is derived using a careful analysis.

\item  Finally, several numerical experiments are included that support the theoretical findings, illustrate uniform decay rates, and explore parameter effects on stability and accuracy.
\end{itemize}

Overall, the paper aims to enhance the understanding of how finite element approximations to damped wave equations behave asymptotically and establishes that suitable discretizations can preserve the essential decay properties, as seen in the continuous case. It is further noted that all results are valid uniformly as $\alpha\to 0$ with $\beta > 0$ or as $\beta\to 0$ with $\alpha > 0.$

We use the standard notation for Sobolev spaces and norms. In particular, let $L^2(\Omega)$ denote the space of square integrable functions on $\Omega$ with natural inner product $(\cdot, \cdot)$ and induced norm $\|\cdot\|$. For a nonnegative integer $k$, let $H^k(\Omega)$ denote the Hilbert Sobolev space with the norm $\|\cdot\|_k$. Let
\[
H_0^1(\Omega) = \{ \Psi \in H^1(\Omega) : \Psi = 0 \;\;\mbox{on}\; 
\partial \Omega \}. 
\]

$A = -\Delta$ is a linear self-adjoint positive definite operator defined on $L^2(\Omega)$ with dense domain $D(A) = H^2(\Omega) \cap H_0^1(\Omega)$. We set $D(A^{\frac{k}{2}})$ as $\dot{H}^k(\Omega)$, which is a subspace of $H^k(\Omega)$ with norm $|\chi|_k = \|A^{\frac{k}{2}}\chi\|$.\\

\noindent
\textbf{Poincar\'e inequality}
\begin{equation}\label{poincare}
	\|u\|^2 \leq \frac{1}{\lambda_1} \|A^\half u\|^2 
\end{equation}
where $\lambda_1$ is the smallest positive eigenvalue of the Laplacian operator $A$.

The variational formulation of problem \eqref{se1.1}-\eqref{se1.2} is to find $u : [0, T] \rightarrow H_0^1(\Omega)$ for any $T > 0$ such that for all $v \in H^1_0(\Omega)$

\begin{eqnarray}\label{se1.4}
	(u'', v) + \beta~ a(u', v) + \alpha\,(u', v) + a(u, v) = 0,
\end{eqnarray}
with $u(0) = u_0 \; \mbox{and} \; u_t(0)=u_1 \; \mbox{in} \; \Omega$, where the bilinear form $a(\cdot,\cdot) : D(A^\half)\times D(A^\half) \to \mathbb{R}$ is defined as
\[
a(w,\chi) = \left(A^\half w,A^\half \chi\right).
\]

%%%%%%%%%%%%%%%%%%%%%%%%%%%%%%%%%%%%%%%%%%%%%%%%%%%%%%%%%%%%%%%%%%%%%%%%
%%%%%%%%%%%%%%%%%%%%%%%%%%%%%%%%%%%%%%%%%%%%%%%%%%%%%%%%%%%%%%%%%%%%%%%%		
\section{Continuous case}\label{sec:cont}
%%%%%%%%%%%%%%%%%%%%%%%%%%%%%%%%%%%%%%%%%%%%%%%%%%%%%%%%%%%%%%%%%%%%%%%%
%%%%%%%%%%%%%%%%%%%%%%%%%%%%%%%%%%%%%%%%%%%%%%%%%%%%%%%%%%%%%%%%%%%%%%%
This section discusses the decay of the energy $E(u)(t) = \half\left(\|u'\|^2 + | u|^2_1\right)$ based on a technique used in the exponential decay of the solution of the parabolic initial and boundary value problem.

By setting $u_\delta = e^{\delta t}u$, write 
\begin{equation*}\label{eq:u'delta}
	u_\delta' = \delta e^{\delta t}u + e^{\delta t}u' = \delta u_\delta + e^{\delta t}u'
\end{equation*}
and
\begin{equation*}\label{eq:u''delta}
	u_\delta'' = \delta^2u_\delta + 2\delta e^{\delta t}u' + e^{\delta t}u''.
\end{equation*}
 Now, rewrite \eqref{se1.4} as 
\begin{eqnarray}\label{se11.4}
	(u''_\delta, v) + \beta~ a(u'_\delta, v) + (\alpha-2\delta)(u'_\delta, v) + (1-\beta\delta)\,a( u_\delta, v) - \delta(\alpha-\delta)(u_\delta,v) = 0,
\end{eqnarray}
with $u_\delta(0) = u_0 \; \mbox{and} \; (u_\delta)'(0) = \delta u_0 + u_1 \; \mbox{in} \; \Omega.$

%%%%%%%%%%%%%%%%%%%%%%%%%%%%%%%%%%%%%%%%%%%%%%%%%%%%%%%%%%%%%%%%%%%%%%%%%%%%%
%%%%%%%%%%%%%%%%%%%%%%%%%%%%%%%%%%%%%%%%%%%%%%%%%%%%%%%%%%%%%%%%%%%%%%%%%%%%%
\begin{thm}\label{thm:eng}
 The following decay estimate hold for the energy $E(u)(t) = \half\left(\|u'\|^2 + |u|^2_1\right)$:
	\begin{enumerate}[label=(\roman*)]
		\item if $0 < \delta < \min\left(\frac{\alpha + \beta\lambda_1}{2}, \frac{\lambda_1}{\alpha + \beta\lambda_1}\right)$ or \, $0 < \delta = \frac{\alpha + \beta\lambda_1}{2}$, then
		\[
		E(u)(t) \leq C(\alpha,\beta,\lambda_1)e^{-2\delta t}E(u)(0),
		\]
		\item if $\delta = \frac{\lambda_1}{\alpha + \beta\lambda_1}$, then
		\[
		\|u'\|^2 \leq C e^{-2\delta t}E(u)(0).
			\]
	\end{enumerate} 
\end{thm}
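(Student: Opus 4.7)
The plan is to exploit the substitution $u_\delta = e^{\delta t}u$ carried out by the authors: exponential decay of $E(u)$ with rate $2\delta$ is equivalent to uniform-in-time boundedness of an appropriate modified energy for the rewritten problem \eqref{se11.4}. I would therefore construct and control a functional $\mathcal{E}_\delta$ built from $u_\delta$ and invert the substitution at the end.

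First, I would test \eqref{se11.4} with $v = u'_\delta$ and use $a(u'_\delta, u_\delta) = \half\tfrac{d}{dt}|A^{\half}u_\delta|^2$ together with $(u'_\delta, u_\delta) = \half\tfrac{d}{dt}\|u_\delta\|^2$ to obtain
\begin{equation*}
\frac{d}{dt}\mathcal{E}_\delta(t) + \beta\,|A^{\half}u'_\delta|^2 + (\alpha-2\delta)\,\|u'_\delta\|^2 = 0, \qquad \mathcal{E}_\delta := \tfrac{1}{2}\|u'_\delta\|^2 + \tfrac{1-\beta\delta}{2}|A^{\half}u_\delta|^2 - \tfrac{\delta(\alpha-\delta)}{2}\|u_\delta\|^2.
\end{equation*}
Two applications of the Poincar\'e inequality then reduce matters to two scalar inequalities in $\delta$: the dissipation is non-negative whenever $\alpha + \beta\lambda_1 - 2\delta \geq 0$, and the quadratic $p(\delta) := \delta^2 - \delta(\alpha+\beta\lambda_1) + \lambda_1$ must be positive to secure the coercive lower bound $\mathcal{E}_\delta(t) \geq \half\|u'_\delta\|^2 + \tfrac{1}{2\lambda_1}p(\delta)|A^{\half}u_\delta|^2$. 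Under the hypothesis $\delta \leq \min((\alpha+\beta\lambda_1)/2, \lambda_1/(\alpha+\beta\lambda_1))$ both conditions are met, since one directly computes $p(\lambda_1/(\alpha+\beta\lambda_1)) = \lambda_1^2/(\alpha+\beta\lambda_1)^2 > 0$. Integrating in time then gives $\mathcal{E}_\delta(t) \leq \mathcal{E}_\delta(0)$; bounding $\mathcal{E}_\delta(0)$ by $C(\alpha,\beta,\lambda_1)\,E(u)(0)$ via Young (recall $u'_\delta(0) = \delta u_0 + u_1$) and undoing the substitution through $|u|_1^2 = e^{-2\delta t}|A^{\half}u_\delta|^2$ and $\|u'\|^2 \leq 2e^{-2\delta t}\|u'_\delta\|^2 + 2\delta^2\|u\|^2$ (the last term absorbed by $|A^{\half}u|^2$ via Poincar\'e) completes part (i). The boundary case $\delta = (\alpha+\beta\lambda_1)/2$ follows identically, since only the dissipation becomes non-strict.

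For part (ii), at $\delta = \lambda_1/(\alpha+\beta\lambda_1)$ the dissipation bracket $\alpha + \beta\lambda_1 - 2\delta = (\alpha+\beta\lambda_1) - 2\lambda_1/(\alpha+\beta\lambda_1)$ may take either sign, depending on the ordering of $(\alpha+\beta\lambda_1)^2$ and $2\lambda_1$, so direct integration of the identity is not available in all parameter regimes. The idea would be to retain only the $\half\|u'_\delta\|^2$ part of $\mathcal{E}_\delta$ on the left and to absorb the indefinite lower-order correction either through Poincar\'e or by a small modification $v = u'_\delta + \theta u_\delta$ of the test function, engineered so that the $(1-\beta\delta)|A^{\half}u_\delta|^2$ contribution compensates the loss in coercivity; after integration one obtains the weaker conclusion $\|u'\|^2 \leq C e^{-2\delta t}E(u)(0)$. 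The principal obstacle is this borderline regime: balancing the three quadratic terms of $\mathcal{E}_\delta$ without spoiling the $\|u'_\delta\|^2$ bound, and making sure that the constants in the final estimate remain uniform in $\alpha$ and $\beta$ as either parameter tends to zero.
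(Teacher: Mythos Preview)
Your treatment of part (i) is exactly the paper's argument: test \eqref{se11.4} with $v=u_\delta'$, obtain the modified energy $\tilde E(u_\delta)=\mathcal{E}_\delta$, use Poincar\'e once on the dissipation ($\beta|u_\delta'|_1^2\ge\beta\lambda_1\|u_\delta'\|^2$) and once on the coercive lower bound, integrate, and undo the substitution. The paper writes the coercivity constant as $\gamma_0=1-\tfrac{(\alpha+\beta\lambda_1)}{\lambda_1}\delta$ rather than your $p(\delta)/\lambda_1$ (it drops the favourable $+\delta^2\|u_\delta\|^2$ into the $\|u_\delta'\|^2$ block instead), but the mechanics are identical.

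For part (ii) you are over-engineering. The statement is to be read, as in the companion Theorems~\ref{thm:higheng}--\ref{thm:geno}, under the standing hypothesis $\delta\le\min\bigl(\tfrac{\alpha+\beta\lambda_1}{2},\tfrac{\lambda_1}{\alpha+\beta\lambda_1}\bigr)$; in particular the case $\delta=\tfrac{\lambda_1}{\alpha+\beta\lambda_1}$ is only invoked when this value does not exceed $\tfrac{\alpha+\beta\lambda_1}{2}$. Hence the dissipation bracket $\alpha+\beta\lambda_1-2\delta$ is still non-negative, the inequality $\tilde E(u_\delta)(t)\le\tilde E(u_\delta)(0)$ from part (i) remains valid, and no auxiliary test function $v=u_\delta'+\theta u_\delta$ is needed. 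The only change is in the lower bound: at this borderline $\delta$ the coefficient $\gamma_0$ vanishes, so Poincar\'e gives merely
\[
\tilde E(u_\delta)(t)\ \ge\ \tfrac12\bigl(\|u_\delta'\|^2+\delta^2\|u_\delta\|^2\bigr)\ =\ \tfrac12\bigl(\|e^{\delta t}u'+\delta u_\delta\|^2+\delta^2\|u_\delta\|^2\bigr)\ \ge\ \tfrac14\,e^{2\delta t}\|u'\|^2,
\]
and substituting back into $\tilde E(u_\delta)(t)\le CE(u)(0)$ yields the weaker conclusion $\|u'\|^2\le Ce^{-2\delta t}E(u)(0)$. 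That is the whole proof of (ii) in the paper; your concern about the sign of the dissipation disappears once the implicit constraint is acknowledged.
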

	
\begin{proof}
	A choice of $v = u_\delta'$ in \eqref{se11.4} yields
	\begin{equation*}\label{eq:eqfordE}
		\half\frac{{\rm d}}{{\rm d}t}\left(\|u_\delta'\|^2 + (1 - \beta\delta)| u_\delta|^2_1 - ((\alpha - \delta)\delta)\|u_\delta\|^2\right) + (\alpha - 2\delta)\|u_\delta'\|^2 + \beta| u_\delta'|^2_1 = 0.
	\end{equation*}            
	A use of the Poincar\'e inequality \eqref{poincare} shows
	\begin{equation}\label{eq:eqfordEin}
		\half\frac{{\rm d}}{{\rm d}t}\left(\|u_\delta'\|^2 + (1 - \beta\delta)| u_\delta|^2_1 - ((\alpha - \delta)\delta)\|u_\delta\|^2\right) + \left(\alpha  + \beta\lambda_1 - 2\delta\right)\|u_\delta'\|^2 \leq 0.
	\end{equation}
	Setting 
	\begin{equation}\label{eq:eude}
		\tilde{E}(u_\delta)(t) = \half\left(\|u_\delta'\|^2 + (1 - \beta\delta)| u_\delta|^2_1 - ((\alpha - \delta)\delta)\|u_\delta\|^2\right)
	\end{equation} 
	 an integration of equation \eqref{eq:eqfordEin} in time with $\delta \leq \frac{\alpha + \beta\lambda_1}{2}$ reveals
	\begin{equation}\label{eq:Etildere}
		\tilde{E}(u_\delta)(t) \leq \tilde{E}(u_\delta)(0) \leq C E(u)(0). %= \half\left(\|u_1\|^2 + (1 - \beta\delta)\|A^\half u_0\|^2 - ((\alpha - \delta)\delta)\|u_0\|^2\right).
	\end{equation}
	Again, an application of \eqref{poincare} with $\delta < \frac{\lambda_1}{\alpha + \beta\lambda_1}$ and $\gamma_0 = 1 - \left(\frac{\alpha + \beta\lambda_1}{\lambda_1}\right)\delta$ shows
	\begin{equation*}
		\begin{aligned}
			\tilde{E}(u_\delta)(t) &= \half\left(\|u_\delta'\|^2 + (1 - \beta\delta)| u_\delta|^2_1 - \alpha\delta\|u_\delta\|^2 + \delta^2\|u_\delta\|^2\right)
			\\
			&\geq \half\left(\|\delta u_\delta + e^{\delta t}u'\|^2 + \frac{1}{\lambda_1}\left( \lambda_1 - \left(\alpha + \beta\lambda_1\right)\delta\right)| u_\delta|^2_1\right) + \frac{\delta^2}{2}\|u_\delta\|^2
			\\
			&\geq \frac{e^{2\delta t}}{2}\min\left(1,\gamma_0\right)\left(\|u'\|^2 + | u|^2_1\right)
			\\
			&= \gamma_0e^{2\delta t}E(u)(t).
		\end{aligned}
	\end{equation*}
On substituting in \eqref{eq:Etildere}, we conclude the proof of $(i)$. \\
	For $(ii)$, if $\delta = \frac{\lambda_1}{\alpha + \beta\lambda_1}$, then  a use of \eqref{poincare} in \eqref{eq:eude} yields
	\begin{equation}\label{eq:Edelta}
		\begin{aligned}
			\tilde{E}(u_\delta)(t) &\geq \half\left(\|u'_\delta\|^2 + \delta^2\|u_\delta\|^2\right) = \half\left(\|e^{2\delta t} u + \delta u_\delta\|^2 + \delta^2\|u_\delta\|^2\right)
			\\
			&\geq \half e^{2\delta t}\|u'\|^2.
		\end{aligned}
	\end{equation} 
Again, substitute \eqref{eq:Edelta} in \eqref{eq:Etildere} to complete the rest of the proof. 
\end{proof}

%%%%%%%%%%%%%%%%%%%%%%%%%%%%%%%%%%%%%%%%%%%%%%%%%%%%%%%%%%%%%%%%%%%%%%%%%%%
%%%%%%%%%%%%%%%%%%%%%%%%%%%%%%%%%%%%%%%%%%%%%%%%%%%%%%%%%%%%%%%%%%%%%%%%%%%	
Next, we study the decay behaviour of the higher order energy. By taking $k - 1 \geq 0$ times derivative of the \eqref{se1.4} with respect to time variable, we obtain
\begin{equation}\label{se1.4k}
	\left(u^{(k+1)}, v\right) + \beta~a( u^{(k)}, v) + \alpha\left(u^{(k)}, v\right) + a(u^{(k-1)}, v) = 0, \quad \forall \quad v \in H^1_0(\Omega).
\end{equation}
With $u_\delta^{(k-1)} = e^{\delta t}u^{(k-1)}$,  write

\begin{equation*}\label{eq:u'deltak}
	u_\delta^{(k)} = \delta e^{\delta t}u^{(k-1)} + e^{\delta t}u^{(k)} = \delta u_\delta^{(k-1)} + e^{\delta t}u^{(k)}
\end{equation*}
and
\begin{equation*}\label{eq:u''deltak}
	u_\delta^{(k+1)} = \delta^2u_\delta^{(k-1)} + 2\delta e^{\delta t}u^{(k)} + e^{\delta t}u^{(k+1)}.
\end{equation*}
The variational formulation \eqref{se1.4k} becomes: find $u_\delta^{(k)} : [0, T] \rightarrow H_0^1(\Omega)$ for any $T > 0$ such that for all $v \in H_0^1(\Omega)$
\begin{equation}\label{se11.4k}
	\begin{aligned}
		(u^{(k+1)}_\delta, v) + \beta~a(u^{(k)}_\delta, v) + (\alpha-2\delta)(u^{(k)}_\delta, v) &+ (1-\beta\delta)~a(u^{(k-1)}_\delta, v) 
		\\
		& - \delta(\alpha-\delta)(u_\delta^{(k-1)},v) = 0.
	\end{aligned}
\end{equation}

%%%%%%%%%%%%%%%%%%%%%%%%%%%%%%%%%%%%%%%%%%%%%%%%%%%%%%%%%%%%%%%%%%%%%%%%%%%%%%

\begin{thm}\label{thm:higheng}
	For $0 < \delta \leq \min\left(\frac{\alpha + \beta\lambda_1}{2}, \frac{\lambda_1}{\alpha + \beta\lambda_1}\right)$ and $k = 1,2,3,\dots$, the solution $u$ of \eqref{se1.1} satisfies
	\begin{enumerate}[label=(\roman*)]
		\item if $0 < \delta < \min\left(\frac{\alpha + \beta\lambda_1}{2}, \frac{\lambda_1}{\alpha + \beta\lambda_1}\right)$ or \, $0 < \delta = \frac{\alpha + \beta\lambda_1}{2}$, then
		\[
		  E^{(k)}(u)(t) \leq C(\alpha,\beta,\lambda_1)e^{-2\delta t}E^{(k)}(u)(0),
		\] 
		where $E^{(k)}(u)(t) = \half\left(\|u^{(k)}\|^2 + |u^{(k-1)}|^2_1\right)$, and $u^{(k)}$ is the $k^{th}$ time derivative of $u$.
		\item if $0 < \delta = \frac{\lambda_1}{\alpha + \beta\lambda_1}$, then
		\[
		 \|u^{(k)}\|^2 \leq Ce^{-2\delta t}E^{(k)}(u)(0).
		\]
	\end{enumerate}
\end{thm}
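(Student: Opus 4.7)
The plan is to mimic the proof of Theorem \ref{thm:eng} almost verbatim, exploiting the fact that the variational identity \eqref{se11.4k} has exactly the same algebraic structure as \eqref{se11.4}: the triple $(u_\delta^{(k-1)}, u_\delta^{(k)}, u_\delta^{(k+1)})$ plays the role of $(u_\delta, u_\delta', u_\delta'')$, and every coefficient, sign, and auxiliary term (including the $-\delta(\alpha-\delta)(\,\cdot\,,v)$ piece) carries over unchanged. So once the equation \eqref{se1.4} has been differentiated $k-1$ times in time, the same test function and the same bookkeeping will produce the analogous decay bound. A preliminary remark that may be worth making is that this differentiation requires $u^{(k+1)}\in L^2(\Omega)$ and $u^{(k)}\in H^1_0(\Omega)$, which follows from the linearity of \eqref{se1.1} provided the initial data lies in sufficiently high powers of $D(A)$.

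First, I would pick $v = u_\delta^{(k)}$ in \eqref{se11.4k}. The first and fourth terms become exact time derivatives, the last term contributes $-\tfrac12\delta(\alpha-\delta)\frac{d}{dt}\|u_\delta^{(k-1)}\|^2$, and the Poincaré inequality \eqref{poincare} applied to the $\beta|u_\delta^{(k)}|_1^2$ term yields
\begin{equation*}
\half\frac{d}{dt}\Bigl(\|u_\delta^{(k)}\|^2 + (1-\beta\delta)|u_\delta^{(k-1)}|_1^2 - \delta(\alpha-\delta)\|u_\delta^{(k-1)}\|^2\Bigr) + (\alpha+\beta\lambda_1-2\delta)\|u_\delta^{(k)}\|^2 \leq 0,
\end{equation*}
which is precisely the higher-order analog of \eqref{eq:eqfordEin}. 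Defining
\begin{equation*}
\tilde{E}^{(k)}(u_\delta)(t) := \half\Bigl(\|u_\delta^{(k)}\|^2 + (1-\beta\delta)|u_\delta^{(k-1)}|_1^2 - \delta(\alpha-\delta)\|u_\delta^{(k-1)}\|^2\Bigr),
\end{equation*}
integrating in time under the hypothesis $\delta \leq (\alpha+\beta\lambda_1)/2$, and using the initial identities $u_\delta^{(k-1)}(0)=u^{(k-1)}(0)$ and $u_\delta^{(k)}(0)=\delta u^{(k-1)}(0)+u^{(k)}(0)$ together with Poincaré, I would obtain $\tilde{E}^{(k)}(u_\delta)(t) \leq \tilde{E}^{(k)}(u_\delta)(0) \leq C(\alpha,\beta,\lambda_1)\,E^{(k)}(u)(0)$.

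Second, I would bound $\tilde{E}^{(k)}(u_\delta)(t)$ from below. For case (i), I rewrite $\|u_\delta^{(k)}\|^2 = \|\delta u_\delta^{(k-1)} + e^{\delta t}u^{(k)}\|^2$ and use Poincaré on the $|u_\delta^{(k-1)}|_1^2$ term, absorbing the negative $\alpha\delta\|u_\delta^{(k-1)}\|^2$ contribution exactly as in the derivation preceding \eqref{eq:Edelta}, to produce
\begin{equation*}
\tilde{E}^{(k)}(u_\delta)(t) \geq \gamma_0 e^{2\delta t}\,E^{(k)}(u)(t), \qquad \gamma_0 = 1 - \frac{(\alpha+\beta\lambda_1)\delta}{\lambda_1}.
\end{equation*}
For case (ii), the choice $\delta=\lambda_1/(\alpha+\beta\lambda_1)$ causes the $|u_\delta^{(k-1)}|_1^2$ coefficient to collapse against the $\delta^2\|u_\delta^{(k-1)}\|^2$ piece, leaving only $\tilde{E}^{(k)}(u_\delta)(t) \geq \tfrac12 e^{2\delta t}\|u^{(k)}\|^2$, the direct analog of \eqref{eq:Edelta}. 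Combining the upper and lower bounds then gives both assertions.

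I do not anticipate a genuine obstacle: the argument is a clean inductive reuse of the machinery of Theorem \ref{thm:eng}. The only point requiring a little care is the bookkeeping of the initial data, specifically showing $\tilde{E}^{(k)}(u_\delta)(0) \lesssim E^{(k)}(u)(0)$ with a constant that does not degenerate as $\alpha,\beta\to 0$; this is handled by invoking Poincaré to absorb the $\delta^2\|u^{(k-1)}(0)\|^2$ term from the cross product in $\|u_\delta^{(k)}(0)\|^2$ into $|u^{(k-1)}(0)|_1^2$, keeping the prefactor bounded uniformly in the damping parameters as guaranteed by the hypothesis on $\delta$.
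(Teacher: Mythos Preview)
Your proposal is correct and follows exactly the approach of the paper: the paper's proof simply sets $w_\delta = u_\delta^{(k-1)}$, observes that \eqref{se11.4k} then reads identically to \eqref{se11.4} with $u_\delta$ replaced by $w_\delta$, and invokes the proof of Theorem~\ref{thm:eng} verbatim. Your write-up just makes the steps of that reuse explicit, including the $t=0$ bookkeeping and regularity remarks that the paper leaves implicit.
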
 
\begin{proof}
	Set $u_\delta^{(k-1)} = w_\delta$, then we obtain from \eqref{se11.4k}
	\begin{equation*}
		\begin{aligned}
			(w''_\delta, v) + \beta~a(w'_\delta, v) + (\alpha-2\delta)(w'_\delta, v) &+ (1-\beta\delta)~a(w_\delta, v) 			
			- \delta(\alpha-\delta)(w_\delta,v) = 0,
		\end{aligned}
	\end{equation*}
	which corresponds to \eqref{se11.4} after replacing $u_\delta$ by $w_\delta$. Therefore, following the arguments in the proof of Theorem \ref{thm:eng}, we conclude the result for $w_\delta$. This completes the rest of the proof. 
\end{proof}
%%%%%%%%%%%%%%%%%%%%%%%%%%%%%%%%%%%%%%%%%%%%%%%%%%%%%%%%%%%%%%%%%%%%%%%%%%%%%%%
%%%%%%%%%%%%%%%%%%%%%%%%%%%%%%%%%%%%%%%%%%%%%%%%%%%%%%%%%%%%%%%%%%%%%%%%%%%%%%%
\begin{thm}\label{thm:Aeng}
	For $0 < \delta \leq \min\left(\frac{\alpha + \beta\lambda_1}{2}, \frac{\lambda_1}{\alpha+\beta\lambda_1}\right)$, the solution $u$ of \eqref{se1.1} satisfies
	\begin{enumerate}[label=(\roman*)]
		\item if $0 < \delta < \min\left(\frac{\alpha + \beta\lambda_1}{2}, \frac{\lambda_1}{\alpha+\beta\lambda_1}\right)$ or \, $0 < \delta = \frac{\alpha + \beta\lambda_1}{2}$, then
		\[
		E_A(u)(t) \leq C(\alpha,\beta,\lambda_1)e^{-2\delta t}E_A(u)(0),
		\]
		where, $E_A(u)(t) = \half\left(|u'|^2_1 + |u|^2_2\right)$.
		\item If $0 < \delta = \frac{\lambda_1}{\alpha + \beta\lambda_1}$, then
		\[
		|u'|^2_1 \leq C e^{-2\delta t}E_A(u)(0).
		\]
	\end{enumerate}
	
\end{thm}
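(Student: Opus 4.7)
The plan is to reduce Theorem \ref{thm:Aeng} directly to Theorem \ref{thm:eng} by a change of variable. Setting $w = A^{1/2} u$, and using the fact that $A$ is self-adjoint and $A^{1/2}$ commutes with $A$ and with $\partial_t$ on the natural domains, applying $A^{1/2}$ to the strong form of \eqref{se1.1} shows that $w$ satisfies the same strongly damped wave equation
\begin{equation*}
w'' + \beta\, A w' + \alpha\, w' + A w = 0,
\end{equation*}
with initial conditions $w(0) = A^{1/2} u_0$ and $w'(0) = A^{1/2} u_1$.

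The key observation is that $E_A(u)$ is exactly the standard energy of $w$:
\begin{equation*}
E_A(u)(t) = \half\left(\|A^{1/2}u'\|^2 + \|Au\|^2\right) = \half\left(\|w'\|^2 + |w|_1^2\right) = E(w)(t),
\end{equation*}
and similarly $|u'|_1^2 = \|w'\|^2$, while $E_A(u)(0) = E(w)(0)$. With these identifications, parts $(i)$ and $(ii)$ follow immediately from the corresponding parts of Theorem \ref{thm:eng} applied to $w$ in place of $u$, with the same thresholds on $\delta$ and the same constant $C(\alpha,\beta,\lambda_1)$.

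The main obstacle is a regularity matter: the substitution is only rigorous once the data are smooth enough for $w$ to be a solution in the same variational sense as $u$, roughly $u_0 \in D(A^{3/2})$ and $u_1 \in D(A)$, or the equivalent Sobolev regularity cited in the introduction. If one prefers to avoid this reduction, a direct alternative is to repeat the energy argument of Theorem \ref{thm:eng} by testing the rescaled variational equation \eqref{se11.4} with $v = A u'_\delta$, introducing the shifted higher-order energy
\begin{equation*}
\tilde{E}_A(u_\delta)(t) = \half\left(|u'_\delta|_1^2 + (1-\beta\delta)|u_\delta|_2^2 - \delta(\alpha-\delta)|u_\delta|_1^2\right),
\end{equation*}
and invoking the Poincaré-type inequality $|\phi|_1^2 \leq \lambda_1^{-1}|\phi|_2^2$ in place of \eqref{poincare}. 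The resulting manipulations are verbatim those of Theorem \ref{thm:eng} with every norm promoted by one factor of $A^{1/2}$, so both the differential inequality and the lower bound on $\tilde{E}_A(u_\delta)$ needed to extract the decay of $E_A(u)(t)$ (respectively $|u'|_1^2$ in case $(ii)$) are immediate consequences of the calculation already performed.
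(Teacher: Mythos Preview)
Your proposal is correct. The alternative you outline at the end---testing \eqref{se11.4} with $v = A u'_\delta$ and working with the shifted energy $\tilde{E}_A(u_\delta)$---is exactly the paper's own proof. Your primary route via the substitution $w = A^{1/2}u$ is a cleaner conceptual repackaging that reduces Theorem~\ref{thm:Aeng} to Theorem~\ref{thm:eng} as a black box rather than repeating the computation; it buys brevity at the cost of the regularity caveat you flag (which the paper's approach also implicitly requires, since choosing $v = A u'_\delta$ in the variational formulation needs $A u'_\delta \in H_0^1(\Omega)$). Both routes yield the same constants and the same thresholds on~$\delta$.
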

\begin{proof}
	Setting $v = Au_\delta'$ in \eqref{se11.4} and using integration by part, we obtain
	\begin{equation*}
		\half\frac{{\rm d}}{{\rm d}t}\left(|u_\delta'|^2_1 + \left(1 - \beta\delta\right)|u_\delta|^2_2 - \delta\left(\alpha - \delta\right)|u_\delta|^2_1\right) + \beta|u_\delta'|^2_2 + \left(\alpha - 2\delta\right)|u_\delta'|^2_1 = 0.
	\end{equation*}
	Now, using the same technique as in Theorem \ref{thm:eng}, we obtain the result. This completes the proof. 
\end{proof}
%%%%%%%%%%%%%%%%%%%%%%%%%%%%%%%%%%%%%%%%%%%%%%%%%%%%%%%%%%%%%%%%%%%%%%%%%%%%%%%%%%
%%%%%%%%%%%%%%%%%%%%%%%%%%%%%%%%%%%%%%%%%%%%%%%%%%%%%%%%%%%%%%%%%%%%%%%%%%%%%%%%%%
\begin{thm}\label{thm:geno}
    Assume that $u_0 \in D(A^{(j)})$ and $u_1 \in D(A^{(j-\half)})$ for $j > 1$. Then, using the techniques of Theorem \ref{thm:higheng}-\ref{thm:Aeng} and the induction, there holds
    \begin{enumerate}[label=(\roman*)]
    	\item if $0 < \delta < \min\left(\frac{\alpha + \beta\lambda_1}{2}, \frac{\lambda_1}{\alpha+\beta\lambda_1}\right)$ or \, $0 < \delta = \frac{\alpha + \beta\lambda_1}{2}$, then
		\[
		  E^{(k)}_{A^{(j)}}(u)(t) \leq C(\alpha,\beta,\lambda_1)e^{-2\delta t}E^{(k)}_{A^{(j)}}(u)(0),
		\]         
		where, $E^{(k)}_{A^{(j)}}(u)(t) = \half\left(|u^{(k)}(t)|^2_{2j-1} + |u^{(k-1)}(t)|^2_{2j}\right)$.
		\item If $0 < \delta = \frac{\lambda_1}{\alpha + \beta\lambda_1}$, then
		\[
		 |u^{(k)}(t)|^2_{2j-1} \leq C e^{-2\delta t}E^{(k)}_{A^{(j)}}(u)(0).
		\]
	\end{enumerate}	
\end{thm}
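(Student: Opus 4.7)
The plan is to extend the argument of Theorem \ref{thm:Aeng} by using a higher-order spatial operator as the test function. Starting from the time-differentiated variational equation \eqref{se11.4k} and setting $w_\delta := u_\delta^{(k-1)}$, one obtains an evolution equation of exactly the same structure as \eqref{se11.4}. I would test this with $v = A^{2j-1} w_\delta'$, exploiting the self-adjointness of $A$. Using the identities $(\phi, A^{2j-1}\psi) = (A^{(2j-1)/2}\phi, A^{(2j-1)/2}\psi)$ and $a(\phi, A^{2j-1}\psi) = (A^{j}\phi, A^{j}\psi)$, every bilinear term converts either into a square of an $|\cdot|_{2j-1}$ or $|\cdot|_{2j}$ norm, or into the time derivative of such a square. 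The resulting identity takes the form
\begin{equation*}
\half\frac{\mathrm{d}}{\mathrm{d}t}\left(|w'_\delta|^2_{2j-1} + (1-\beta\delta)|w_\delta|^2_{2j} - \delta(\alpha-\delta)|w_\delta|^2_{2j-1}\right) + (\alpha-2\delta)|w'_\delta|^2_{2j-1} + \beta|w'_\delta|^2_{2j} = 0,
\end{equation*}
which is structurally identical to \eqref{eq:eqfordEin} with $\|\cdot\|$ and $|\cdot|_1$ replaced by $|\cdot|_{2j-1}$ and $|\cdot|_{2j}$, respectively.

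Next I would invoke the shifted Poincar\'e inequality $|\chi|^2_{2j-1} \leq \lambda_1^{-1}|\chi|^2_{2j}$, which follows from \eqref{poincare} applied to $A^{(2j-1)/2}\chi$. This absorbs the $\beta|w'_\delta|^2_{2j}$ contribution into a term with coefficient $\alpha + \beta\lambda_1 - 2\delta$, recovering the threshold that drives Theorems \ref{thm:eng}--\ref{thm:Aeng}. Because the resulting differential inequality now has precisely the form of \eqref{eq:eqfordEin}, the analysis that extracts $\tilde{E}(u_\delta)(t) \leq C E(u)(0)$ and then undoes the exponential shift $u_\delta = e^{\delta t} u$ to recover the unshifted energy carries over verbatim; this yields the bound in case $(i)$. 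For case $(ii)$, when lower-bounding the shifted energy one keeps only the $|w'_\delta|^2_{2j-1}$ contribution, exactly as in the proof of Theorem \ref{thm:eng}$(ii)$, producing the claimed estimate for $|u^{(k)}|^2_{2j-1}$.

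To organize the argument, I would induct on $j$ with an inner induction on $k$. The base case $j = 1$ is supplied directly by Theorem \ref{thm:higheng} (for arbitrary $k$) and Theorem \ref{thm:Aeng}; for the inductive step, one first establishes the estimate at level $(k-1, j)$ and then at $(k, j)$ before advancing $j$. The hypotheses $u_0 \in D(A^{(j)})$ and $u_1 \in D(A^{(j-\half)})$ supply the required regularity at $t = 0$, and successive time differentiation of \eqref{se1.1} together with the equation itself transfers this regularity to every $u^{(k)}$ needed in the computation.

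The principal obstacle I anticipate is not the algebra, which is a transcription of the earlier arguments, but the rigorous justification that the formal test $v = A^{2j-1}w_\delta'$ is admissible, i.e., that $w_\delta' = u_\delta^{(k)}$ actually lies in $D(A^{(2j-1)/2})$. Handling this cleanly is exactly what the nested induction is for: one needs to establish the required regularity at each level before testing at the next, and strictly speaking one should work within a Galerkin or spectral approximation and pass to the limit using the uniform bounds produced at each inductive step. Once this bookkeeping is in place, each individual energy identity proceeds as in the proofs of Theorems \ref{thm:eng} and \ref{thm:Aeng}.
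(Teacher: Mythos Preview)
Your proposal is correct and follows exactly the route the paper indicates: the paper gives no separate proof for this theorem, merely stating that it follows ``using the techniques of Theorem~\ref{thm:higheng}--\ref{thm:Aeng} and the induction,'' and your argument is a faithful (and more careful) realization of that sketch, testing the $w_\delta$-equation with $v = A^{2j-1}w_\delta'$ and invoking the shifted Poincar\'e inequality to reproduce the structure of \eqref{eq:eqfordEin} at each level.
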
	

	\begin{Rem}
		We observe from Theorem \ref{thm:geno} that the constant $C = C(\alpha,\beta,\lambda_1)$ does not blow up when $\alpha$ and $\beta$ tend to zero. Therefore, when $\alpha \to 0, \beta > 0$, the decay rate in Theorem \ref{thm:geno} becomes $0 < \delta < \min\left(\frac{\beta\lambda_1}{2}, \frac{1}{\beta}\right)$.
		In case $\beta \to 0, \alpha > 0$, the range of decay in Theorem \ref{thm:geno} becomes $0 < \delta < \min\left(\frac{\alpha}{2}, \frac{\lambda_1}{\alpha}\right)$, which is better than the decay estimate derived in \cite{danumjaya2023asymptotic}.
	\end{Rem}
	\begin{Rem}
		Let $u^{\alpha,\beta}$ be the solution of \eqref{se1.1} and let $u^{0,\beta}$ and $u^{\alpha,0}$ be the solution of the problems with $\alpha = 0$ and $\beta = 0$, respectively, then by using similar argument as in the proof of Theorem \ref{thm:geno}, it is easy to show
		\[
		E(u^{\alpha,\beta} - u^{0,\beta})(t) = O(\alpha e^{-2\delta_0 t}),
		\]
		and
		\[
		E(u^{\alpha,\beta} - u^{\alpha,0})(t) = O(\beta e^{-2\delta_1 t}),
		\] 
		provided $0 < \delta_0 < \min\left(\frac{\beta\lambda_1}{2}, \frac{\lambda_1}{\alpha + \beta\lambda_1}\right)$ and $0 < \delta_1 < \min\left(\frac{\alpha}{2}, \frac{\lambda_1}{\alpha + \beta\lambda_1}\right)$, respectively.
\end{Rem}

%%%%%%%%%%%%%%%%%%%%%%%%%%%%%%%%%%%%%%%%%%%%%%%%%%%%%%%%%%%%%%%
%%%%%%%%%%%%%%%%%%%%%%%%%%%%%%%%%%%%%%%%%%%%%%%%%%%%%%%%%%%%%%%	
\section{Semidiscrete Method}\label{sec:semd}
%%%%%%%%%%%%%%%%%%%%%%%%%%%%%%%%%%%%%%%%%%%%%%%%%%%%%%%%%%%%%%%
%%%%%%%%%%%%%%%%%%%%%%%%%%%%%%%%%%%%%%%%%%%%%%%%%%%%%%%%%%%%%%%

Let $\mathcal{T}_h$ be a shape regular triangulation of $\bar{\Omega}$ and let $h = \max \mbox{diam}(K) ~\forall ~K \in \mathcal{T}_h$ be the discretization parameter. Let $X_h$ be the finite-dimensional subspace of $H_0^1$ with the following approximation property:
\begin{equation*}
	\inf_{\psi_h \in X_h}\left\{\|\psi - \psi_h\| + h\|\psi - \psi_h\|_1\right\} \leq Ch^2\|\psi\|_2, ~ \forall ~ \psi \in H^2\cap H^1_0.
\end{equation*} 
Set the discrete bilinear form by 
\[
a_h(w_h,\chi_h) = \left(A_h^\half w_h, A_h^\half\chi_h\right) = \left(A_hw_h, \chi_h\right), ~ \forall~ w_h,\chi_h \in X_h,
\] 
 with the discrete norm by $|\chi_h|_{1,h} = \|A_h^{\frac{1}{2}}\chi_h\|,$ which is actually $|\chi_h|_1$ and $\|\chi_h\|_{2,h}= \|A_h \chi_h\|.$

The semidiscrete formulation is defined as: Find $u_h : [0,\infty) \to X_h$ such that
\begin{equation}\label{sdf}
	\begin{aligned}
		\left(u_h'', v\right) + \beta~a_h(u_h', v) &+  \alpha\left(u_h', v\right) + a_h(u_h, v) = 0, ~ \forall ~ v \in X_h,
	\end{aligned}
\end{equation}
with $u_h(0) = u_{0h} ~ \mbox{and}~ u_h'(0) = u_{1h}$ to be defined later.

By defining $u_{h,\delta}(t) = e^{\delta t}u_h$, we can rewrite our semidiscrete formulation as
\begin{equation}\label{deltasdf}
	\begin{aligned}
		\left(u_{h,\delta}'', v\right) + \beta~a_h(u_{h,\delta}', v) + (\alpha - 2\delta)\left(u_{h,\delta}', v\right)& + (1 - \beta\delta)~a_h(u_{h,\delta}, v)
		\\
		& - \delta(\alpha - \delta)\left(u_{h,\delta}, v\right) = 0, ~\forall ~ v \in X_h.
	\end{aligned}
\end{equation}     
The discrete energy is given by 
\[
E_h(u_h)(t) = \half\left(\|u_h'\|^2 + |u_h|^2_1\right).
\]
%%%%%%%%%%%%%%%%%%%%%%%%%%%%%%%%%%%%%%%%%%%%%%%%%%%%%%%%%%%%%%%%%%%%%%%%%%%%%%
%%%%%%%%%%%%%%%%%%%%%%%%%%%%%%%%%%%%%%%%%%%%%%%%%%%%%%%%%%%%%%%%%%%%%%%%%%%%%%
\begin{thm}\label{thm:diseng}
	For $0 < \delta \leq \min\left(\frac{\alpha + \beta\lambda_1}{2}, \frac{\lambda_1}{\alpha + \beta\lambda_1}\right)$, the solution $u_h$ of \eqref{sdf} satisfies
	\begin{enumerate}[label=(\roman*)]
		\item if $0 < \delta < \min\left(\frac{\alpha + \beta\lambda_1}{2}, \frac{\lambda_1}{\alpha + \beta\lambda_1}\right)$ or \, $0 < \delta = \frac{\alpha + \beta\lambda_1}{2}$, then
		\[
		E_h(u_h)(t) \leq C(\alpha,\beta,\lambda_1)e^{-2\delta t}E_h(u_h)(0),
		\]
		\item if $0 < \delta = \frac{\lambda_1}{\alpha + \beta\lambda_1}$, then
		\[
		\|u_h'(t)\|^2 \leq C e^{-2\delta t}E_h(u_h)(0).
		\]
	\end{enumerate}
	 
\end{thm}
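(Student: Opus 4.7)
The plan is to mirror the argument of Theorem \ref{thm:eng} at the discrete level, exploiting the fact that the reformulated semidiscrete equation \eqref{deltasdf} is structurally identical to \eqref{se11.4}, and that $X_h\subset H^1_0(\Omega)$ inherits the Poincar\'e inequality $\|v\|^2\leq \lambda_1^{-1}|v|_1^2$ needed for the continuous proof.

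First I would test \eqref{deltasdf} with $v = u_{h,\delta}'$. Because $a_h(\cdot,\cdot)$ is symmetric and positive definite on $X_h$, this choice produces, after recognizing each term as a time derivative of the corresponding quadratic form, the identity
\begin{equation*}
\half\frac{{\rm d}}{{\rm d}t}\left(\|u_{h,\delta}'\|^2 + (1-\beta\delta)|u_{h,\delta}|^2_1 - \delta(\alpha-\delta)\|u_{h,\delta}\|^2\right) + (\alpha-2\delta)\|u_{h,\delta}'\|^2 + \beta|u_{h,\delta}'|^2_1 = 0,
\end{equation*}
which is precisely the discrete counterpart of the key identity in the proof of Theorem \ref{thm:eng}. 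Applying Poincar\'e to the $\beta|u_{h,\delta}'|^2_1$ term (valid since $u_{h,\delta}'\in X_h\subset H^1_0$) replaces the last two nonnegative terms by $(\alpha+\beta\lambda_1-2\delta)\|u_{h,\delta}'\|^2\geq 0$ under the hypothesis $\delta\leq (\alpha+\beta\lambda_1)/2$.

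Next, following \eqref{eq:eude}--\eqref{eq:Etildere} verbatim, I would define
\[
\tilde{E}_h(u_{h,\delta})(t) = \half\left(\|u_{h,\delta}'\|^2 + (1-\beta\delta)|u_{h,\delta}|^2_1 - \delta(\alpha-\delta)\|u_{h,\delta}\|^2\right),
\]
integrate the differential inequality in time to get $\tilde{E}_h(u_{h,\delta})(t)\leq \tilde{E}_h(u_{h,\delta})(0)\leq C\,E_h(u_h)(0)$ (the last bound coming from $|u_{h,\delta}'(0)|=|\delta u_{0h}+u_{1h}|$ and Poincar\'e), and then use Poincar\'e from below: with $\gamma_0 = 1-\delta(\alpha+\beta\lambda_1)/\lambda_1$ and the identity $u_{h,\delta}' = \delta u_{h,\delta} + e^{\delta t}u_h'$, the same algebra as in the continuous proof gives $\tilde{E}_h(u_{h,\delta})(t)\geq \gamma_0 e^{2\delta t}E_h(u_h)(t)$ under $\delta<\lambda_1/(\alpha+\beta\lambda_1)$, yielding part $(i)$. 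For part $(ii)$ at the endpoint $\delta=\lambda_1/(\alpha+\beta\lambda_1)$, one drops the $|u_{h,\delta}|^2_1$ contribution, keeps only $\half(\|u_{h,\delta}'\|^2+\delta^2\|u_{h,\delta}\|^2)\geq \half e^{2\delta t}\|u_h'\|^2$, and substitutes to finish.

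The main obstacle is essentially bookkeeping rather than conceptual: one must verify that every identity used in Theorem \ref{thm:eng} remains valid when the bilinear form $a$ is replaced by $a_h$ on $X_h$, which holds because $a_h$ coincides with $a$ on $X_h\times X_h$ (as indicated by $|\chi_h|_{1,h}=|\chi_h|_1$), and that the Poincar\'e constant $\lambda_1$ can still be used since it is the Poincar\'e constant on the whole of $H^1_0$ and hence on $X_h$. Once these are in place, the proof is a line-by-line translation of the continuous case; accordingly I would simply note this correspondence and invoke the arguments of Theorem \ref{thm:eng} rather than repeating the computation in full.
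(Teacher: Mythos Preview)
Your proposal is correct and matches the paper's own proof essentially line for line: the paper also tests \eqref{deltasdf} with $v=u_{h,\delta}'$, records the resulting energy identity, invokes the Poincar\'e inequality on $X_h\subset H^1_0(\Omega)$, and then explicitly refers the reader back to the proof of Theorem~\ref{thm:eng} with $u$ replaced by $u_h$ and $u_\delta$ by $u_{h,\delta}$.
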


\begin{proof}
By choosing $v = u_{h,\delta}'$ in \eqref{deltasdf}, we obtain
\begin{equation*}
	\begin{aligned}
		\half\frac{{\rm d}}{{\rm d}t}\left(\|u_{h,\delta}'\|^2 + \left(1 - \beta \delta\right)|u_{h,\delta}|^2_1 - \delta(\alpha - \delta)\|u_{h,\delta}\|^2\right) + \beta|u_{h,\delta}'|^2_1 + (\alpha - 2 \delta)\|u_{h,\delta}'\|^2 = 0.
	\end{aligned}
\end{equation*}
Since $u_h \in X_h \subset H_0^1$, then by the Poincar\'e inequality \eqref{poincare}, we have
\begin{equation}\label{dispoincare}
	\|u_h(t)\| \leq \frac{1}{\sqrt{\lambda_1}}|u_h(t)|^2_1.
\end{equation}
By following steps similar to the proof of Theorem \ref{thm:eng} and replacing $u$ by $u_h$ and $u_\delta$ by $u_{h,\delta}$, we obtain the desired result.
\end{proof}

%%%%%%%%%%%%%%%%%%%%%%%%%%%%%%%%%%%%%%%%%%%%%%%%%%%%%%%%%%%%%%%%%%
%%%%%%%%%%%%%%%%%%%%%%%%%%%%%%%%%%%%%%%%%%%%%%%%%%%%%%%%%%%%%%%%%%

\begin{thm}\label{thm:dishigheng}
	For $0 < \delta \leq \min\left(\frac{\alpha + \beta\lambda_1}{2}, \frac{\lambda_1}{\alpha + \beta\lambda_1}\right)$ and $k = 1,2,3,\dots$, the solution $u_h$ of \eqref{sdf} satisfies
	\begin{enumerate}[label=(\roman*)]
		\item if $0 < \delta < \min\left(\frac{\alpha + \beta\lambda_1}{2}, \frac{\lambda_1}{\alpha + \beta\lambda_1}\right)$ or \, $0 < \delta = \frac{\alpha + \beta\lambda_1}{2}$, then
		\[
		E^{(k)}_h(u_h)(t) \leq C(\alpha,\beta,\lambda_1)e^{-2\delta t}E_h^{(k)}(u_h)(0),
		\]
		\item if $0 < \delta = \frac{\lambda_1}{\alpha + \beta\lambda_1}$, then
		\[
		\|u^{(k)}_h(t)\|^2 \leq C e^{-2\delta t}E_h^{(k)}(u_h)(0),
		\]
		\end{enumerate}
	where $E^{(k)}_h(u_h)(t) = \half\left(\|u^{(k)}_h\|^2 + |u^{(k-1)}_h|^2_1\right)$.
\end{thm}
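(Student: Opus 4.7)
The plan is to reduce Theorem \ref{thm:dishigheng} to Theorem \ref{thm:diseng} by the same device that reduces Theorem \ref{thm:higheng} to Theorem \ref{thm:eng} in the continuous setting. Because the semidiscrete problem \eqref{sdf} is a linear system of ODEs in the finite dimensional space $X_h$ with time independent coefficients, $u_h(\cdot)$ is $C^\infty$ in $t$ and every derivative $u_h^{(k)}$ is again a well defined element of $X_h$, so differentiation in time is completely unambiguous.

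First I would differentiate \eqref{sdf} in $t$ exactly $k-1$ times; since $\alpha$, $\beta$, $a_h(\cdot,\cdot)$ and $(\cdot,\cdot)$ are all time independent, this produces the semidiscrete analogue of \eqref{se1.4k}, namely
\[
(u_h^{(k+1)}, v) + \beta\, a_h(u_h^{(k)}, v) + \alpha(u_h^{(k)}, v) + a_h(u_h^{(k-1)}, v) = 0, \qquad \forall\, v \in X_h.
\]
Setting $w_h := u_h^{(k-1)}$ and $w_{h,\delta}(t) := e^{\delta t} w_h(t)$, the same purely algebraic manipulation as in the passage from \eqref{se1.4k} to \eqref{se11.4k} converts this identity into one of exactly the form \eqref{deltasdf} with $u_{h,\delta}$ replaced by $w_{h,\delta}$.

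At that point the proof of Theorem \ref{thm:diseng} applies verbatim to $w_h$ in place of $u_h$: testing with $v = w_{h,\delta}'$ and using the discrete Poincar\'e inequality \eqref{dispoincare} (which is valid because $w_h(t) \in X_h \subset H_0^1$) gives the same $\tilde E$-type differential inequality, and the same case distinction on $\delta$ produces, in case $(i)$, the decay of $E_h^{(k)}(u_h)(t) = \tfrac12(\|w_h'\|^2 + |w_h|_1^2)$, and, in case $(ii)$, the decay of $\|u_h^{(k)}(t)\|^2 = \|w_h'(t)\|^2$, both with constants $C(\alpha,\beta,\lambda_1)$ that do not blow up as $\alpha\to 0$ or $\beta\to 0$.

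The only genuinely new bookkeeping, and where I expect the mildest obstacle, is the interpretation of $E_h^{(k)}(u_h)(0)$: for $j \geq 2$ the values $u_h^{(j)}(0)$ are not prescribed data but must be obtained recursively by setting $t=0$ in the differentiated semidiscrete identity and inverting the mass matrix on $X_h$. Once that recursion is in place, the estimates above read cleanly in terms of $u_h^{(k)}(0)$ and $u_h^{(k-1)}(0)$, and the reduction to Theorem \ref{thm:diseng} completes the argument.
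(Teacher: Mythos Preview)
Your proposal is correct and is essentially the same approach as the paper's: the paper simply says the proof is ``similar to the proof of Theorem~\ref{thm:higheng},'' which in turn sets $w_\delta = u_\delta^{(k-1)}$, observes that the differentiated equation for $w_\delta$ has the same form as \eqref{se11.4}, and invokes the base case. Your added remarks on $C^\infty$-in-$t$ regularity of $u_h$ and on recovering $u_h^{(j)}(0)$ recursively from the ODE are more careful than the paper itself but do not change the argument.
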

\begin{proof}
	The proof is similar to the proof of Theorem \ref{thm:higheng}.
\end{proof}

%%%%%%%%%%%%%%%%%%%%%%%%%%%%%%%%%%%%%%%%%%%%%%%%%%%%%%%%%%%%%%%%%%%%%%%%%%%%
%%%%%%%%%%%%%%%%%%%%%%%%%%%%%%%%%%%%%%%%%%%%%%%%%%%%%%%%%%%%%%%%%%%%%%%%%%%%

\begin{thm}\label{thm:disAeng}
	For $0 < \delta \leq \min\left(\frac{\alpha + \beta\lambda_1}{2},\frac{\lambda_1}{\alpha + \beta\lambda_1}\right)$, the solution $u_h$ of \eqref{sdf} satisfies
	\begin{enumerate}[label=(\roman*)]
		\item if $0 < \delta < \min\left(\frac{\alpha + \beta\lambda_1}{2}, \frac{\lambda_1}{\alpha + \beta\lambda_1}\right)$ or \, $0 < \delta = \frac{\alpha + \beta\lambda_1}{2}$, then
		\[
		E_{(A_h)}(u_h)(t) \leq C(\alpha,\beta,\lambda_1)e^{-2\delta t}E_{(A_h)}(u_h)(0), ~ {\color{red} \forall~ t \geq 0},
		\]
		\item if $0 < \delta = \frac{\lambda_1}{\alpha + \beta\lambda_1}$, then
		\[
		|u_h'(t)|^2_1 \leq C e^{-2\delta t}E_{(A_h)}(u_h)(0),
		\]
	\end{enumerate}
	where $E_{(A_h)}(u_h)(t) = \half\left(|u_h'|^2_1 + |u_h|^2_{2,h}\right)$.
\end{thm}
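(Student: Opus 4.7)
The proof should run in close parallel to Theorem \ref{thm:Aeng}, only with the continuous objects $A$, $a(\cdot,\cdot)$, $|\cdot|_2$ replaced by their discrete counterparts $A_h$, $a_h(\cdot,\cdot)$, $|\cdot|_{2,h}$. The plan is to test \eqref{deltasdf} against $v = A_h u'_{h,\delta}$, which is an admissible element of $X_h$ since $A_h$ maps $X_h$ into itself. Using the identities $(w_h, A_h \chi_h) = a_h(w_h,\chi_h)$ and $a_h(w_h, A_h \chi_h) = (A_h w_h, A_h \chi_h)$ for $w_h,\chi_h \in X_h$, together with the symmetry of $A_h$, each term in \eqref{deltasdf} becomes either an $|\cdot|_1$ or $|\cdot|_{2,h}$ seminorm, or a time derivative of such. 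This yields the energy-type identity
\begin{equation*}
\tfrac{1}{2}\tfrac{\rm d}{{\rm d}t}\!\left(|u'_{h,\delta}|^2_1 + (1-\beta\delta)|u_{h,\delta}|^2_{2,h} - \delta(\alpha-\delta)|u_{h,\delta}|^2_1\right) + \beta |u'_{h,\delta}|^2_{2,h} + (\alpha - 2\delta)|u'_{h,\delta}|^2_1 = 0,
\end{equation*}
which is the exact discrete analogue of the identity in the proof of Theorem \ref{thm:Aeng}.

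Next I would establish the discrete Poincar\'e inequality at the higher level, namely $|u'_{h,\delta}|^2_1 \le \lambda_1^{-1}|u'_{h,\delta}|^2_{2,h}$. This follows from $a_h(\phi_h,\phi_h) = (A_h \phi_h,\phi_h) \le \|A_h\phi_h\|\,\|\phi_h\| \le \lambda_1^{-1/2}\|A_h\phi_h\|\,|\phi_h|_1$ together with \eqref{dispoincare}. Using it on $\beta |u'_{h,\delta}|^2_{2,h} \ge \beta \lambda_1 |u'_{h,\delta}|^2_1$ absorbs the dissipation and, under the hypothesis $\delta \le \tfrac{\alpha + \beta\lambda_1}{2}$, shows that
\[
\tilde E_{A_h}(u_{h,\delta})(t) := \tfrac{1}{2}\!\left(|u'_{h,\delta}|^2_1 + (1-\beta\delta)|u_{h,\delta}|^2_{2,h} - \delta(\alpha-\delta)|u_{h,\delta}|^2_1\right)
\]
is non-increasing in $t$, hence bounded above by $\tilde E_{A_h}(u_{h,\delta})(0) \le C\, E_{(A_h)}(u_h)(0)$.

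The final step is to pass back from $\tilde E_{A_h}(u_{h,\delta})$ to the physical discrete energy $E_{(A_h)}(u_h)$, and this is where cases (i) and (ii) split. For (i), expanding $u'_{h,\delta} = \delta u_{h,\delta} + e^{\delta t}u'_h$, combining the $\delta^2\|u_{h,\delta}\|^2$ contribution with $-\alpha\delta\|u_{h,\delta}\|^2$, and applying the discrete Poincar\'e inequalities \eqref{dispoincare} and $|u_{h,\delta}|^2_1 \le \lambda_1^{-1}|u_{h,\delta}|^2_{2,h}$ gives $\tilde E_{A_h}(u_{h,\delta})(t) \ge \gamma_0 e^{2\delta t} E_{(A_h)}(u_h)(t)$ with $\gamma_0 = 1 - \tfrac{(\alpha+\beta\lambda_1)\delta}{\lambda_1} > 0$ under the strict inequality $\delta < \tfrac{\lambda_1}{\alpha + \beta\lambda_1}$; the boundary case $\delta = \tfrac{\alpha+\beta\lambda_1}{2}$ is handled by keeping the dissipation term on the left and absorbing the $|u_{h,\delta}|^2_{2,h}$ term directly. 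For (ii), with $\delta = \tfrac{\lambda_1}{\alpha+\beta\lambda_1}$, the coefficient $\gamma_0$ vanishes and only the $\tfrac{1}{2}e^{2\delta t}|u'_h|^2_1$ contribution survives, exactly as in the continuous case.

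I do not expect a serious obstacle; the only subtle point is verifying the higher-order discrete Poincar\'e inequality $|\phi_h|^2_1 \le \lambda_1^{-1}|\phi_h|^2_{2,h}$ on $X_h$, since the eigenvalues of $A_h$ are generally larger than $\lambda_1$ so the bound holds with the continuous constant. Once this is in place, the algebra is mechanically the same as in Theorem \ref{thm:Aeng}, and the constants $C(\alpha,\beta,\lambda_1)$ remain bounded as $\alpha\to 0$ or $\beta\to 0$ in agreement with the remark following Theorem \ref{thm:geno}.
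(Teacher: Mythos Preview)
Your proposal is correct and follows exactly the paper's own argument: choose $v = A_h u_{h,\delta}'$ in \eqref{deltasdf} to obtain the displayed energy identity, then repeat the steps of Theorem~\ref{thm:Aeng} with the discrete Poincar\'e inequality \eqref{dispoincare}. The paper's proof is in fact sketchier than yours (it simply refers back to Theorem~\ref{thm:Aeng}), and your explicit verification of the higher-level inequality $|\phi_h|_1^2 \le \lambda_1^{-1}|\phi_h|_{2,h}^2$ is a detail the paper leaves implicit.
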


\begin{proof}
	Choosing $v = A_hu_{h,\delta}'$ in \eqref{deltasdf} and using definition of $A_h$, we obtain
	\begin{equation*}
		\begin{aligned}
			\half\frac{{\rm d}}{{\rm d}t}\left(|u_{h,\delta}'|^2_1 + \left(1 - \beta\delta\right)|u_{h,\delta}|^2_{2,h} - \delta\left(\alpha-\delta\right)|u_{h,\delta}|^2_1\right) &+ \beta|u_{h,\delta}'|^2_{2,h} 
			+ \left(\alpha - 2\delta\right)|u_{h,\delta}'|^2_1 = 0.
		\end{aligned}
	\end{equation*}
	Proceeding in the similar manner like the proof of Theorem \ref{thm:Aeng} with the use of the Poincar\'e inequality \eqref{dispoincare} give the desired result. This completes the rest of the proof.  
\end{proof}

%%%%%%%%%%%%%%%%%%%%%%%%%%%%%%%%%%%%%%%%%%%%%%%%%%%%%%%%%%%%%%%%
\subsection{Error estimates}
%%%%%%%%%%%%%%%%%%%%%%%%%%%%%%%%%%%%%%%%%%%%%%%%%%%%%%%%%%%%%%%%
This subsection is on error estimates with decay property of the semidiscrete problem.

Let $\Pi_hu$ be the elliptic projection of $u$ defined by
\[
a_h((u - \Pi_hu), \chi) = 0, ~ \forall ~ \chi \in X_h.
\]
Let 
\[
e := u - u_h = (u - \Pi_hu) + (\Pi_hu - u_h) = \eta + \theta,
\]
and
\[
e_\delta := u_\delta - u_{h,\delta} = (u_\delta - \Pi_hu_\delta) + (\Pi_hu_\delta - u_{h,\delta}) = \eta_\delta + \theta_\delta.
\]
As $\Pi_hu$ is the elliptic projection, therefore,
\[
a_h(\eta'_\delta, v) = \delta e^{\delta t}a_h(\eta, v) + e^{\delta t}a_h((u' - \Pi_hu'), v) = 0, ~\forall ~ v \in X_h.
\] 
The projection operator satisfies the following approximation properties
\begin{equation*}\label{eq:projest}
	\|\eta\|_j + \|\eta'\|_j + \|\eta''\|_j \leq Ch^{r+1-j}\left(\sum_{m=0}^{2}\Big\Vert\frac{\partial^m u}{\partial t^m}\Big\Vert_{r+1}\right), ~ j = 0,1.
\end{equation*}
\begin{equation*}
	\|\eta_\delta\|_j + \|\eta'_\delta\|_j + \|\eta''_\delta\|_j \leq Ch^{r+1-j}e^{\delta t}\left(\sum_{m=0}^{2}\Big\Vert\frac{\partial^m u}{\partial t^m}\Big\Vert_{r+1}\right), ~ j = 0,1.
\end{equation*}
Set
	\begin{equation*}\label{eq:deltaetarelation}
		\begin{aligned}
			\eta_\delta = e^{\delta t}\eta, \;	\eta_\delta' = e^{\delta t}(\delta \eta + \eta') \; \mbox{and} \; \eta_\delta'' = e^{\delta t}(\eta'' + 2\delta\eta' + \delta^2\eta).
		\end{aligned}
	\end{equation*}
By subtracting equation \eqref{sdf} from equation \eqref{se1.4} and using the definition of the elliptic projection, we arrive at the following
\begin{equation}\label{eq:error}
	\begin{aligned}
		\left(\theta'', v\right) + \beta~a_h(\theta', v) + \alpha\left(\theta', v\right) + a_h(\theta, v) = -\left(\eta'', v\right) - \alpha\left(\eta', v\right), ~\forall~ v \in X_h.
	\end{aligned}
\end{equation} 
Similarly, we obtain 
\begin{equation}\label{eq:deltaerror}
	\begin{aligned}
		\left(\theta_\delta'', v\right) + \beta~a_h(\theta_\delta', v) &+ (\alpha - 2\delta)\left(\theta_\delta', v\right) + (1 - \beta\delta)~a_h(\theta_\delta, v) 
		\\
		& - \delta(\alpha - \delta)\left(\theta_\delta, v\right) 
		=-e^{\delta t}\left(\eta'', v\right) - \alpha e^{\delta t}\left(\eta', v\right), ~ \forall ~ v \in X_h.
	\end{aligned}
\end{equation}

%%%%%%%%%%%%%%%%%%%%%%%%%%%%%%%%%%%%%%%%%%%%%%%%%%%%%%%%%%%%%%%%%%%%%
%%%%%%%%%%%%%%%%%%%%%%%%%%%%%%%%%%%%%%%%%%%%%%%%%%%%%%%%%%%%%%%%%%%%%

\begin{thm}\label{thm:thetaetarel}
	Let $\theta$ satisfy \eqref{eq:error}. Then, for small $\delta_0>0$ and $0 < \delta \leq \min\left(\frac{\alpha + \beta\lambda_1-\delta_0\lambda_1}{2}, \frac{\lambda_1}{\alpha + \beta\lambda_1}\right)$ there holds
	\begin{enumerate}[label=(\roman*)]
		\item if $0 < \delta < \min\left(\frac{\alpha + \beta\lambda_1-\delta_0\lambda_1}{2}, \frac{\lambda_1}{\alpha + \beta\lambda_1}\right)$ or \, $0 < \delta = \frac{\alpha + \beta\lambda_1-\delta_0\lambda_1}{2}$, then
		\[
		E_h(\theta)(t) \leq Ce^{-2\delta t}E_h(\theta)(0) + C\int_0^t e^{-2\delta(t-s)}\left(\|\eta''(s)\|^2 + \|\eta'(s)\|^2 \right){\rm d}s, \quad \forall~ t \geq 0,
		\]
		\item if $0 < \delta = \frac{\lambda_1}{\alpha + \beta\lambda_1}$, then
		\[
		\|\theta'(t)\|^2 \leq Ce^{-2\delta t}E_h(\theta)(0) + C\int_0^t e^{-2\delta(t-s)}\left(\|\eta''(s)\|^2 + \|\eta'(s)\|^2 \right){\rm d}s.
		\]
	\end{enumerate}
\end{thm}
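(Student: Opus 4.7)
The plan is to test the error equation \eqref{eq:deltaerror} with $v = \theta_\delta'$ and mimic the argument of Theorem \ref{thm:eng}, with the only new ingredient being the treatment of the two forcing terms $-e^{\delta t}(\eta'', \theta_\delta')$ and $-\alpha e^{\delta t}(\eta', \theta_\delta')$ on the right-hand side. First I would carry out the same algebraic manipulation that led to \eqref{eq:eqfordEin} to obtain
\begin{equation*}
\tfrac{1}{2}\tfrac{{\rm d}}{{\rm d}t}\tilde E(\theta_\delta)(t) + \beta|\theta_\delta'|_1^2 + (\alpha - 2\delta)\|\theta_\delta'\|^2 = -e^{\delta t}(\eta'', \theta_\delta') - \alpha e^{\delta t}(\eta', \theta_\delta'),
\end{equation*}
where $\tilde E(\theta_\delta)$ is defined exactly as in \eqref{eq:eude} with $u_\delta$ replaced by $\theta_\delta$. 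A use of Poincar\'e's inequality on $\beta|\theta_\delta'|_1^2$ then yields the coefficient $(\alpha + \beta\lambda_1 - 2\delta)\|\theta_\delta'\|^2$ on the left.

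The key observation is that the tightened hypothesis $\delta \leq \tfrac{\alpha + \beta\lambda_1 - \delta_0\lambda_1}{2}$ is sharper than the one in Theorem \ref{thm:eng} by precisely the cushion $\delta_0\lambda_1 > 0$. This slack is what I would use to absorb the right-hand side: applying Young's inequality gives
\begin{equation*}
|e^{\delta t}(\eta'',\theta_\delta')| + \alpha|e^{\delta t}(\eta',\theta_\delta')| \leq \tfrac{\delta_0\lambda_1}{2}\|\theta_\delta'\|^2 + C e^{2\delta t}\bigl(\|\eta''\|^2 + \|\eta'\|^2\bigr),
\end{equation*}
and since $\alpha + \beta\lambda_1 - 2\delta \geq \delta_0\lambda_1$, the $\|\theta_\delta'\|^2$ piece can be absorbed into the left. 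What remains is the differential inequality
\begin{equation*}
\tfrac{{\rm d}}{{\rm d}t}\tilde E(\theta_\delta)(t) \leq C e^{2\delta t}\bigl(\|\eta''(t)\|^2 + \|\eta'(t)\|^2\bigr),
\end{equation*}
which integrates directly to $\tilde E(\theta_\delta)(t) \leq \tilde E(\theta_\delta)(0) + C\int_0^t e^{2\delta s}(\|\eta''(s)\|^2 + \|\eta'(s)\|^2){\rm d}s$.

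To convert this into the claimed estimate on $E_h(\theta)(t)$, I would reuse the lower bound for $\tilde E$ from the proof of Theorem \ref{thm:eng}. In case (i), the hypothesis $\delta < \lambda_1/(\alpha + \beta\lambda_1)$ ensures $\gamma_0 := 1 - (\alpha + \beta\lambda_1)\delta/\lambda_1 > 0$ and gives $\tilde E(\theta_\delta)(t) \geq \gamma_0 e^{2\delta t}E_h(\theta)(t)$; dividing through by $e^{2\delta t}$ and using $\tilde E(\theta_\delta)(0) \leq C E_h(\theta)(0)$ completes part (i). In case (ii), at the endpoint $\delta = \lambda_1/(\alpha + \beta\lambda_1)$ the weaker lower bound from \eqref{eq:Edelta}, namely $\tilde E(\theta_\delta)(t) \geq \tfrac{1}{2}e^{2\delta t}\|\theta'(t)\|^2$, delivers the corresponding weaker conclusion on $\|\theta'(t)\|^2$ only.

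The main technical delicacy is the bookkeeping of constants so that the Young's-inequality absorption does not erode the exponential decay rate: this is precisely the role of the extra $-\delta_0\lambda_1$ in the upper bound on $\delta$, which is what differentiates the present statement from the homogeneous estimate in Theorem \ref{thm:eng}. Once that cushion is in place, the argument essentially duplicates the earlier one, and the only remaining routine verification is that $\tilde E(\theta_\delta)(0)$ is bounded by $E_h(\theta)(0)$ up to a factor depending only on $\alpha,\beta,\lambda_1$.
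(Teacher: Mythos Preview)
Your proposal is correct and follows essentially the same approach as the paper: test \eqref{eq:deltaerror} with $v=\theta_\delta'$, use Young's inequality with the $\delta_0\lambda_1$ slack in the hypothesis to absorb the $\|\theta_\delta'\|^2$ contribution from the forcing terms, integrate in time, and then invoke the lower bounds on $\tilde E(\theta_\delta)$ from the proof of Theorem~\ref{thm:eng} to handle cases (i) and (ii). The paper's argument is identical in structure and differs only in inessential constants in the Young's inequality step.
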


\begin{proof}
	A use of $v = \theta_\delta'$ in \eqref{eq:deltaerror} yields
	\begin{equation*}
		\begin{aligned}
			\half\frac{{\rm d}}{{\rm d}t}\left(\|\theta_\delta'\|^2 + (1 - \beta\delta)|\theta_\delta|^2_1 - \delta(\alpha - \delta)\|\theta_\delta\|^2\right) &+ \beta|\theta_\delta'|^2_1 + (\alpha - 2\delta)\|\theta_\delta'\|^2 
			\\
			&=-e^{\delta t}\left(\eta'', \theta_\delta'\right) - \alpha e^{\delta t}\left(\eta', \theta_\delta'\right).
		\end{aligned}
	\end{equation*}
For some $\delta_0 > 0$, a use of the H\"older inequality with the Young's inequality shows
	\begin{equation*}
		\frac{{\rm d}}{{\rm d}t}\tilde{E}_h(\theta_\delta)(t) + (\alpha + \beta\lambda_1 - 2\delta)\|\theta_\delta'\|^2 \leq \frac{4e^{2\delta t}}{\delta_0\lambda_1}\left(\|\eta''\|^2 + \alpha^2\|\eta'\|^2\right) + \delta_0\lambda_1\|\theta_\delta'\|^2.
	\end{equation*}
	Using kick-back argument with $\delta \leq \frac{\alpha + \beta\lambda_1-\delta_0\lambda_1}{2}$ and integrating in time from $0$ to $t$, we obtain
	\begin{equation*}
		\begin{aligned}
			\tilde{E}_h(\theta_\delta)(t) \leq \tilde{E}_h(\theta_\delta)(0) + \frac{4}{\delta_0\lambda_1}\int_0^te^{2\delta s}\left(\|\eta''(s)\|^2 + \alpha^2\|\eta'(s)\|^2 \right){\rm d}s.
		\end{aligned}
	\end{equation*}
Rest of the proof follows by using an argument similar to the proof of Theorem \ref{thm:eng}.
\end{proof}	

%%%%%%%%%%%%%%%%%%%%%%%%%%%%%%%%%%%%%%%%%%%%%%%%%%%%%%%%%%%%%%%%%%%%%%%%
%%%%%%%%%%%%%%%%%%%%%%%%%%%%%%%%%%%%%%%%%%%%%%%%%%%%%%%%%%%%%%%%%%%%%%%%
\noindent
Note that $\theta(0) = 0$ if $u_{0h} = \Pi_hu_0$ and, therefore,
\[
E_h(\theta)(0) = \half\|\theta'(0)\|^2.
\]
Let $u_{1h}$ be either the $L^2$-projection or the interpolant of $u_1$ in $X_h$, then
\begin{equation*}
	E_h(\theta)(0) \leq Ch^4\|u_1\|^2_2.
\end{equation*}
Hence, from Theorem \ref{thm:thetaetarel}, we obtain
\begin{equation}\label{eq:2.37}
	\|\theta'(t)\|^2 +|\theta(t)|^2_1\leq Ch^4e^{-2\delta t}\left(\|u_1\|^2_2 + \int_0^te^{2\delta s}\left(\|u''(s)\|^2_2 + \|u'(s)\|^2_2 \right){\rm d}s\right).
\end{equation}
In order to estimate the two terms under the integral sign on the right-hand side of \eqref{eq:2.37}, we note from Theorem \ref{thm:geno}, when $j = \frac{3}{2}$ and $k = 2$ that
\begin{equation}\label{eq:2.40}
	\begin{aligned}
		\|u''(t)\|^2_2 \leq Ce^{-2\delta t}E_{A^{(\frac{3}{2})}}^{(2)}(u)(0) &\leq Ce^{-2\delta t}\left(| u^{(2)}(0)|^2_2 
        + |u'(0)|^2_3\right)
		\\
		&\leq Ce^{-2\delta t}\left(\beta\|u_1\|_4^2 + \|u_1\|_3^2 + \|u_0\|_4^2\right).
	\end{aligned}
\end{equation}
Here, we have used from equation \eqref{se1.1}
\begin{equation}
	u''(0) = -\beta Au'(0) - \alpha u'(0) - Au(0) = -\beta Au_1 - \alpha u_1 - Au_0
\end{equation}
and it's estimate. For $\|u'(t)\|_2$ estimate, apply the Theorem \ref{thm:geno} for $j = \frac{3}{2}$ and  $k = 1$ to arrive at
\begin{equation}\label{eq:2.41}
	\begin{aligned}
		\|u'(t)\|^2_2 \leq Ce^{-2\delta t}E_{A^{(\frac{3}{2})}}^{(1)}(u)(0) & \leq Ce^{-2\delta t}\left(| u'(0)|^2_2 + |u_0|^2_3\right)
		\\
		&\leq Ce^{-2\delta t}\left(\|u_1\|^2_2 + \|u_0\|^2_3\right).
	\end{aligned}
\end{equation}
A use of equations \eqref{eq:2.40}-\eqref{eq:2.41} in equation \eqref{eq:2.37}, shows
\begin{equation*}
	\|\theta'(t)\|^2 + |\theta(t)|_1^2\leq Ch^4(1 + t)e^{-2\delta t}\left(\beta\|u_1\|_4^2 + \|u_1\|_3^2 + \|u_0\|_4^2\right).
\end{equation*}
Hence, a use of triangle inequality with estimates of $\eta$ implies
\begin{equation}\label{estimate:super-cgt}
	\|u'(t) - u_h'(t)\|^2  + |u(t)-u_h(t)|_1^2 \leq Ch^4(1+t)e^{-2\delta t}\left(\beta\|u_1\|_4^2 + \|u_1\|_3^2 + \|u_0\|_4^2\right).
\end{equation}

%%%%%%%%%%%%%%%%%%%%%%%%%%%%%%%%%%%%%%%%%%%%%%%%%%%%%%%%%%%%%%%%%%%%%%%%%%%%%
\begin{Rem}
	If $\beta \to 0$, then
	\begin{equation*}
		\|u'(t) - u_h'(t)\|^2+|u(t)-u_h(t)|_1^2 \leq Ch^4(1+t)e^{-2\delta t}\left(\|u_1\|_3^2 + \|u_0\|_4^2\right).
	\end{equation*}
\end{Rem}
%%%%%%%%%%%%%%%%%%%%%%%%%%%%%%%%%%%%%%%%%%%%%%%%%%%%%%%%%%%%%%%%%%%%%%%%%%%%

Below, the following result on optimal estimate in $H^1$-norm is given in terms of a theorem.
\begin{thm}\label{thm:H1est}
	Let $u_{0h}$ and $u_{1h}$ be either a $L^2$-projection or an interpolant of $u_0$ and $u_1$, respectively. Then, for small $\delta_0 > 0$ and for $0 < \delta \leq \min\left(\frac{\alpha + \beta\lambda_1-\delta_0\lambda_1}{2}, \frac{\lambda_1}{\alpha + \beta\lambda_1}\right),$ the following  optimal estimate holds
	\[
	\|\nabla(u - u_h)(t)\|^2 \leq Ch^2(1 + t)e^{-2\delta t}\left(\|u_0\|^2_3 + \beta\|u_1\|^2_3 + \|u_1\|^2_2\right).
	\]
	If $\beta \to 0$, then
	\[
	\|\nabla(u - u_h)(t)\|^2 \leq Ch^2(1 + t)e^{-2\delta t}\left(\|u_0\|^2_3 + \|u_1\|^2_2\right).
	\]
\end{thm}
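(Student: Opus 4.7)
The plan is to decompose the error via the elliptic projection as $u - u_h = \eta + \theta$, where $\eta = u - \Pi_h u$ and $\theta = \Pi_h u - u_h$, and then apply the triangle inequality $|u - u_h|_1^2 \leq 2|\eta|_1^2 + 2|\theta|_1^2$ to treat the two contributions separately. For the projection term, the approximation property gives $|\eta(t)|_1^2 \leq Ch^2\|u(t)\|_2^2$; combined with the exponential decay from Theorem \ref{thm:Aeng}, namely $|u(t)|_2^2 \leq Ce^{-2\delta t}(|u_1|_1^2 + |u_0|_2^2) \leq Ce^{-2\delta t}(\|u_1\|_2^2 + \|u_0\|_3^2)$, this already supplies the $O(h)$ contribution in exactly the form claimed by the theorem.

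For $\theta$, I would apply Theorem \ref{thm:thetaetarel}, which yields
\begin{equation*}
|\theta(t)|_1^2 \leq Ce^{-2\delta t}E_h(\theta)(0) + C\int_0^t e^{-2\delta(t-s)}\left(\|\eta'(s)\|^2 + \|\eta''(s)\|^2\right){\rm d}s.
\end{equation*}
Choosing $u_{0h} = \Pi_h u_0$ and $u_{1h}$ as the $L^2$-projection or interpolant of $u_1$ makes $\theta(0) = 0$ and $E_h(\theta)(0) \leq Ch^4\|u_1\|_2^2$. The term $\|\eta'(s)\|^2 \leq Ch^4\|u'(s)\|_2^2$ combines with estimate \eqref{eq:2.41} to contribute a decaying quantity of order $h^4(1 + t)e^{-2\delta t}(\|u_1\|_2^2 + \|u_0\|_3^2)$, which is of higher order in $h$ than the $\eta$-contribution, so it will be absorbed into the dominant $h^2$ term.

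The main obstacle is handling $\|\eta''(s)\|^2$. The optimal $L^2$-bound $\|\eta''\|^2 \leq Ch^4\|u''\|_2^2$ used together with \eqref{eq:2.40} would force the right-hand side to contain $\|u_0\|_4^2$ and $\beta\|u_1\|_4^2$, one derivative above what the theorem permits. To circumvent this, I would interpolate to the suboptimal bound $\|\eta''(s)\|^2 \leq Ch^2\|u''(s)\|_1^2$ and then apply Theorem \ref{thm:geno} with $j = 1,\, k = 2$, combined with the identity $u''(0) = -\beta Au_1 - \alpha u_1 - Au_0$ from \eqref{se1.1}, yielding $|u''(0)|_1^2 \leq C(\beta^2\|u_1\|_3^2 + \alpha^2\|u_1\|_1^2 + \|u_0\|_3^2)$. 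Under the standing boundedness of $\beta$, the $\beta^2$ factor reduces to a constant multiple of $\beta$, and the convolution integral $\int_0^t e^{-2\delta(t-s)}e^{-2\delta s}{\rm d}s = te^{-2\delta t}$ generates the $(1 + t)$ factor. Collecting the estimates via the triangle inequality then gives the stated bound, and the case $\beta \to 0$ follows by inspection since the $\beta$-weighted term simply drops out.
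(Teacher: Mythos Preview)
Your approach is essentially the same as the paper's: split $u-u_h=\eta+\theta$ via the elliptic projection, invoke Theorem~\ref{thm:thetaetarel} for $\theta$, and crucially use the \emph{suboptimal} estimate $\|\eta''(s)\|^2\le Ch^2\|u''(s)\|_1^2$ together with Theorem~\ref{thm:geno} for $j=1,\;k=2$ so that only $\|u_0\|_3$ and $\|u_1\|_3$ (not $\|u_0\|_4,\|u_1\|_4$) enter. Your handling of the $\beta^2\mapsto\beta$ reduction and of the $(1+t)$ factor from the convolution integral also matches the paper.

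There is one genuine discrepancy: you set $u_{0h}=\Pi_h u_0$, i.e.\ the \emph{elliptic} projection, in order to get $\theta(0)=0$ and $E_h(\theta)(0)\le Ch^4\|u_1\|_2^2$. But the theorem hypothesis specifies that $u_{0h}$ is the $L^2$-projection or interpolant, so $\theta(0)=\Pi_hu_0-u_{0h}\neq 0$ in general. The paper therefore uses the coarser bound $E_h(\theta)(0)\le Ch^2(\|u_0\|_2^2+\|u_1\|_1^2)$, obtained from $|\theta(0)|_1\le|\Pi_hu_0-u_0|_1+|u_0-u_{0h}|_1\le Ch\|u_0\|_2$ and an analogous $O(h)$ estimate for $\|\theta'(0)\|$. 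Since the target is $O(h^2)$ anyway, this is enough. Your argument becomes a valid proof of the stated theorem once you replace your choice of $u_{0h}$ by this estimate; everything downstream is unaffected.

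A minor stylistic difference: you estimate $\|\eta'(s)\|^2$ optimally by $Ch^4\|u'(s)\|_2^2$ (using \eqref{eq:2.41}), while the paper uses the suboptimal $Ch^2\|u'(s)\|_1^2$ throughout, keeping the regularity demands uniform. Both choices work.
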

\begin{proof}
Since $u_{0h}$ and $u_{1h}$ are either  $L^2$-projection or interpolant of $u_0$ and $u_1$, respectively, then
\begin{equation*}
	E_h(\theta)(0) \leq Ch^2\left(\|u_0\|^2_2 + \|u_1\|^2_1\right),
\end{equation*} 
and hence from Theorem \ref{thm:thetaetarel}, for $0 < \delta < \min\left(\frac{\alpha + \beta\lambda_1-\delta_0\lambda_1}{2}, \frac{\lambda_1}{\alpha + \beta\lambda_1}\right)$ or $0 < \delta = \frac{\alpha + \beta\lambda_1 - \delta_0\lambda_1}{2}$
\begin{equation}
	\begin{aligned}
		\|\nabla \theta\|^2 &\leq \|A_h^\half\theta\|^2 \leq Ce^{-2\delta t}\left(E_h(\theta)(0) + \int_0^te^{2\delta s}\left(\|\eta''(s)\|^2 + \|\eta'(s)\|^2\right){\rm d}s\right)
		\\
		&\leq Ch^2e^{-2\delta t}\left(\|u_0\|^2_2 + \|u_1\|^2_1 + \int_0^te^{2\delta s}\left(\|u''(s)\|^2_1 + \|u'(s)\|^2_1 \right){\rm d}s\right).\label{estimate-H1}
	\end{aligned}
\end{equation}
A use of Theorem \ref{thm:geno} yields
\begin{equation}\label{estimate:u-2}
	\begin{aligned}
		\|u''(s)\|^2_1 &\leq C\|A^\half u''(s)\|^2 \leq Ce^{-2\delta s}E^{(2)}_{A}(u)(0) \leq Ce^{-2\delta s}\left(\beta\|u_1\|^2_3 + \|u_1\|^2_2 + \|u_0\|^2_3\right)
	\end{aligned}
    \end{equation}
    and 
    \begin{equation}\label{estimate:u-1}
	\begin{aligned}
		\|u'(s)\|^2_1 &\leq C\|A^\half u'(s)\|^2 \leq Ce^{-2\delta s}E^{(1)}_{A}(u)(0) \leq Ce^{-2\delta s}\left(\|u_1\|^2_1 + \|u_0\|^2_2\right)
%		\\
%		\|u(s)\|^2_1 &\leq C\|u(s)\|^2_2 \leq C\|Au(s)\|^2 \leq Ce^{-2\delta s}E^{(1)}_{A^{(1)}}(u)(0) \leq Ce^{-2\delta s}\left(\|u_1\|^2_1 + \|u_0\|^2_1\right).
	\end{aligned}
\end{equation}
Substitute \eqref{estimate:u-2} and \eqref{estimate:u-1} in \eqref{estimate-H1}  to complete the rest of the proof.
\end{proof}
%%%%%%%%%%%%%%%%%%%%%%%%%%%%%%%%%%%%%%%%%%%%%%%%%%%%%%%%%%%%%%%%%%%%%%%%%%%%%

%%%%%%%%%%%%%%%%%%%%%%%%%%%%%%%%%%%%%%%%%%%%%%%%%%%%%%%%%%%%%%%%%%%%%%%%%%%%%
%%%%%%%%%%%%%%%%%%%%%%%%%%%%%%%%%%%%%%%%%%%%%%%%%%%%%%%%%%%%%%%%%%%%%%%%%%%%%
As a consequence of super-convergence result in \eqref{estimate:super-cgt}, one obtains the optimal $L^2$-estimate, but it needs higher regularity, that is, $u_0\in H^4(\Omega) \cap H^1_0(\Omega)$ and $u_1\in H^3(\Omega) \cap H^1_0(\Omega).$ However, in the following theorem, using a nonstandard method by Baker \cite{Baker}, \cite{PY} (see also \cite{larsson1991finite} for a different approach), we derive  the optimal $L^2$-error estimate with minimal regularity in the initial data, that is, $u_0\in H^3(\Omega) \cap H^1_0(\Omega)$ and $u_1\in H^2(\Omega) \cap H^1_0(\Omega).$
\begin{thm}\label{thm:L2est}
	Let $u$ and $u_h$ be the solution of \eqref{se1.1} and \eqref{sdf}, respectively. Then, there exists a positive constant $C$ independent of $h$ such that
	\[
	\|u(t) - u_h(t)\| \leq Ch^2 t^\half e^{-\delta t}\left(\|u_0\|_3 + \|u_1\|_2\right).
	\]
\end{thm}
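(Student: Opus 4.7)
The plan is to split $u - u_h = \eta + \theta$ with $\eta = u - \Pi_h u$ and $\theta = \Pi_h u - u_h$, and bound each piece under the desired rate. For $\eta$, the elliptic projection gives $\|\eta(t)\| \le Ch^2|u(t)|_2$, while Theorem \ref{thm:Aeng} (with $k=0$) supplies $|u(t)|_2 \le Ce^{-\delta t}(\|u_0\|_2 + \|u_1\|_1)$, so the $\eta$-contribution already fits under $Ch^2 e^{-\delta t}$ without any $t^{1/2}$ growth; the whole work is therefore in showing that the $\theta$-piece also sits under $Ch^2 t^{1/2} e^{-\delta t}$.

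For $\theta$, I would follow the Baker nonstandard duality device. Fix $t > 0$ and introduce the auxiliary function
\[
\hat{\theta}_\delta(s) = \int_s^t \theta_\delta(r)\,{\rm d}r, \qquad 0 \le s \le t,
\]
where $\theta_\delta = e^{\delta s}\theta$, so that $\hat{\theta}_\delta(t) = 0$ and $\hat{\theta}_\delta'(s) = -\theta_\delta(s)$. Substituting $v = \hat{\theta}_\delta(s)$ into the rescaled error equation \eqref{eq:deltaerror} and integrating over $[0,t]$, every occurrence of $\theta_\delta''$ or $\theta_\delta'$ can be integrated by parts in time. I expect the resulting identity to have coercive side
\[
\half\|\theta_\delta(t)\|^2 + \beta\int_0^t|\theta_\delta|_1^2\,{\rm d}s + (\alpha - 2\delta)\int_0^t\|\theta_\delta\|^2\,{\rm d}s + \tfrac{1-\beta\delta}{2}|\hat{\theta}_\delta(0)|_1^2,
\]
plus initial-trace terms $(\theta_\delta(0),\hat{\theta}_\delta(0))$, $(\theta_\delta'(0),\hat{\theta}_\delta(0))$, $\|\theta_\delta(0)\|^2$, each of size $O(h^2)$ by the choice of $u_{0h},u_{1h}$, and a stray $\delta(\alpha-\delta)\|\hat{\theta}_\delta(0)\|^2$ that is absorbed into $|\hat{\theta}_\delta(0)|_1^2$ via Poincar\'e in the admissible $\delta$-range.

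The essential feature is how the forcing $\mathcal{R}(\eta) = -\int_0^t e^{\delta s}(\eta'',\hat{\theta}_\delta)\,{\rm d}s - \alpha\int_0^t e^{\delta s}(\eta',\hat{\theta}_\delta)\,{\rm d}s$ is handled: one more integration by parts in time on the $\eta''$ term, using $\hat{\theta}_\delta(t)=0$ and $\hat{\theta}_\delta' = -\theta_\delta$, replaces that integral by a single boundary contribution $(\eta'(0),\hat{\theta}_\delta(0))$ together with integrals of $(e^{\delta s}\eta',\theta_\delta)$ and $(e^{\delta s}\eta',\hat{\theta}_\delta)$. No $\eta''$ survives, which is exactly what lowers the regularity demand to $u_0\in H^3$, $u_1\in H^2$. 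Cauchy--Schwarz, Young, and Poincar\'e then permit a kick-back absorption of small multiples of $|\hat{\theta}_\delta(0)|_1^2$ and $\int_0^t\|\theta_\delta\|^2\,{\rm d}s$ into the coercive side (this is where $\delta < (\alpha + \beta\lambda_1)/2$ enters).

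The elliptic projection bound $\|\eta'(s)\| \le Ch^2\|u'(s)\|_2$ together with Theorem \ref{thm:geno} gives $e^{\delta s}\|\eta'(s)\| \le Ch^2(\|u_0\|_3 + \|u_1\|_2)$. The $t^{1/2}$ factor is generated precisely because $\int_0^t e^{2\delta s}\|\eta'(s)\|^2\,{\rm d}s \le Ch^4 t\,(\|u_0\|_3+\|u_1\|_2)^2$ after cancelling the exponential, producing $\|\theta_\delta(t)\|^2 \le Ch^4(1+t)(\|u_0\|_3+\|u_1\|_2)^2$ and hence $\|\theta(t)\| \le Ch^2 t^{1/2} e^{-\delta t}(\|u_0\|_3+\|u_1\|_2)$; a triangle inequality with the $\eta$-estimate then closes the argument. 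The main obstacle I anticipate is the coercivity bookkeeping: verifying that the negative $\delta(\alpha-\delta)\|\hat{\theta}_\delta(0)\|^2$ contribution and every Young cross-term involving $(e^{\delta s}\eta',\hat{\theta}_\delta)$ can be dominated by the available coercive quantities while keeping $\delta$ in the admissible range, and that each boundary trace from the integrations by parts is controlled by only $\|u_0\|_3$ and $\|u_1\|_2$---this minimal-regularity control being the whole purpose of the Baker device.
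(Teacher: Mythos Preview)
Your proposal correctly identifies the Baker strategy and the splitting $e=\eta+\theta$, and your $\eta$-bound is fine. The gap is precisely where you flagged it: absorbing the forcing term $(e^{\delta s}\eta',\hat{\theta}_\delta)$ that remains after one integration by parts in time (and its companion from the $\alpha$-damping). None of the coercive quantities you list --- $\|\theta_\delta(t)\|^2$, $\beta\int_0^t|\theta_\delta|_1^2$, $(\alpha-2\delta)\int_0^t\|\theta_\delta\|^2$, $|\hat{\theta}_\delta(0)|_1^2$ --- controls $\int_0^t\|\hat{\theta}_\delta(s)\|^2\,{\rm d}s$; the coercive side only sees $|\hat{\theta}_\delta|_1$ at the single time $s=0$. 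If you try to pass $\hat{\theta}_\delta$ back to $\theta_\delta$ via Fubini or via $\hat{\theta}_\delta(s)=\hat{\theta}_\delta(0)-\int_0^s\theta_\delta$, you pick up an extra power of $t$ and end with $t^{3/2}$ instead of $t^{1/2}$ (equivalently, a loss in the decay exponent).

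The paper avoids this by organizing the Baker device differently. It integrates the \emph{unweighted} error equation \eqref{eq:error} once in time --- this is where $\eta''$ becomes $\eta'$ plus boundary data killed by $u_{0h}=\Pi_hu_0$ and $u_{1h}$ the $L^2$-projection of $u_1$ --- and then tests the resulting first-order-in-time identity with the \emph{combination} $v=\theta+\tilde{\delta}\hat{\theta}$, where $\hat{\theta}(t)=\int_0^t\theta$ is the forward primitive. This yields a differential inequality
\[
\frac{{\rm d}}{{\rm d}t}\tilde{E}_{\tilde{\delta}}(\theta)(t)+c\,\tilde{E}_{\tilde{\delta}}(\theta)(t)\le C\bigl(\|\eta'\|^2+\|\eta\|^2+\|\eta(0)\|^2\bigr),
\]
with $\tilde{E}_{\tilde{\delta}}\sim\|\theta\|^2+|\hat{\theta}|_1^2$. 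The key point is that every cross-term $(\eta',\hat{\theta})$ or $(\eta,\hat{\theta})$ now appears at a \emph{single} time $t$, so after Young's inequality the contribution $\|\hat{\theta}(t)\|^2\le\lambda_1^{-1}|\hat{\theta}(t)|_1^2$ is absorbed \emph{pointwise} into the dissipation on the left. Gr\"onwall then gives exponential decay with no accumulated $t$-loss; the sole $t$-factor comes from $\int_0^t e^{-2\delta(t-s)}e^{-2\delta s}\,{\rm d}s=te^{-2\delta t}$ on the forcing side. Your test function $\hat{\theta}_\delta$ alone cannot reproduce this pointwise absorption; adding the companion $\theta_\delta$-test (or, equivalently, passing to a differential inequality in the unweighted variables) is what closes the estimate.
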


\begin{proof}
	On integrating equation \eqref{eq:error} with respect to time, we now obtain 
	\begin{equation}\label{eq:theta-L2}
		\begin{aligned}
			\left(\theta', v\right) + \beta~a_h(\theta,v) + \alpha\left(\theta,v\right) + a_h(\hat{\theta},v) &= -\left(\eta',v\right) - \alpha\left(\eta,v\right) 
			\\
			&+ \alpha\left(e(0), v\right) + \left(e'(0),v\right) + \beta~a_h(\theta(0),v).
		\end{aligned}
	\end{equation}
	Choose  $u_{0h}$ and $u_{1h}$ as elliptic and $L^2$ projections of $u_0$ and $u_1$, respectively, then,
		\[
		\left(e'(0), v\right) = 0 ~ \mbox{and}~ \theta(0) = 0.
		\]
	Set $v = \theta + \tilde{\delta} \hat{\theta}$ in \eqref{eq:theta-L2}  and then use definiton of the energy and Young's inequality to obtain
	\begin{equation}\label{eq:sec4err}
		\begin{aligned}
			\frac{{\rm d}}{{\rm d}t}\tilde{E}_{\tilde{\delta}}(\theta)(t) + \mathcal{F}(\theta)(t) \leq C\left(\|\eta'\|^2 + \|\eta\|^2 + \|\eta(0)\|^2\right), %+ \delta_0\left(\|\theta\|^2 + \lambda_1\|\hat{\theta}\|^2\right),
		\end{aligned}
	\end{equation}
	where
	\begin{equation*}
		\tilde{E}_{\tilde{\delta}}(\theta)(t) = E(\theta)(t) + \tilde{\delta}\left(\theta, \hat{\theta}\right) + \frac{\beta\tilde{\delta}}{2}|\hat{\theta}|^2_1 + \frac{\alpha\tilde{\delta}}{2}\|\hat{\theta}\|^2,
	\end{equation*}
	with
	\begin{equation*}
		E(\theta)(t) = \half\left(\|\theta\|^2 + |\hat{\theta}|^2_1\right),
	\end{equation*}
	and 
	\begin{equation*}
		\mathcal{F}(\theta)(t) = \left(\tilde{\delta}-\delta_0\right)|\hat{\theta}|^2_1 + \left(\alpha+\beta\lambda_1-\delta_0-\tilde{\delta}\right)\|\theta\|^2.
	\end{equation*}
	A use of Young's inequality, provided $0 < {\tilde{\delta}} \leq \min\left(\frac{\left(\alpha+\beta\lambda_1\right)}{2}, \frac{\lambda_1}{4\left(\alpha+\beta\lambda_1\right)}\right)$, yields
	\begin{equation*}
		\begin{aligned}
			\tilde{E}_{\tilde{\delta}}(\theta)(t) &\leq E(\theta)(t) + \frac{{\tilde{\delta}}}{2\left(\alpha+\beta\lambda_1\right)}\|\theta\|^2 + \frac{\left(\alpha+\beta\lambda_1\right){\tilde{\delta}}}{2}\|\hat{\theta}\|^2 + \frac{\left(\alpha+\beta\lambda_1\right){\tilde{\delta}}}{2\lambda_1}|\hat{\theta}|^2_1
			\\
			&\leq \frac{3}{2}E(\theta)(t) + \left(\frac{{\tilde{\delta}}}{2\left(\alpha+\beta\lambda_1\right)} - \frac{1}{4}\right)\|\theta\|^2 + \left(\frac{\left(\alpha+\beta\lambda_1\right){\tilde{\delta}}}{\lambda_1} - \frac{1}{4}\right)|\hat{\theta}|^2_1
			\\
			&\leq \frac{3}{2}E(\theta)(t).
		\end{aligned}
	\end{equation*}
	Similarly, we obtain
	\begin{equation*}
		\tilde{E}_{\tilde{\delta}}(\theta)(t) \geq \half E(\theta)(t),
	\end{equation*}
	provided $0 < {\tilde{\delta}}\leq \min\left(\frac{\left(\alpha+\beta\lambda_1\right)}{2}, \frac{\lambda_1}{4\left(\alpha+\beta\lambda_1\right)}\right)$. Hence,
	\begin{equation}\label{eq:sec4err1}
	\half E(\theta)(t) \leq	\tilde{E}_{\tilde{\delta}}(\theta)(t) \leq \frac{3}{2} E(\theta)(t),
	\end{equation}
	provided $0 < {\tilde{\delta}}\leq \min\left(\frac{\left(\alpha+\beta\lambda_1\right)}{2}, \frac{\lambda_1}{4\left(\alpha+\beta\lambda_1\right)}\right)$.
	From the definition of $\mathcal{F}(\theta)$, provided $0 < {\tilde{\delta}} \leq \frac{\alpha+\beta\lambda_1}{2}$, we obtain
	\begin{equation}\label{eq:sec4err2}
		\begin{aligned}
			\mathcal{F}(\theta)(t) &= \left({\tilde{\delta}}-\delta_0\right)|\hat{\theta}|^2_1 + \left(\alpha+\beta\lambda_1-\delta_0-\tilde{\delta}\right)\|\theta\|^2
			\\
			&\geq 2\left(\tilde{\delta}-\delta_0\right) E(\theta)(t) + \left(\alpha+\beta\lambda_1 - 2\tilde{\delta}\right)\|\theta\|^2
			\\
			&\geq 2\left(\tilde{\delta}-\delta_0\right) E(\theta)(t) \geq \frac{4}{3}\left(\tilde{\delta} - \delta_0\right)E_{\tilde{\delta}}(\theta)(t).
		\end{aligned}
	\end{equation}
	A use of the above two inequality \eqref{eq:sec4err1}-\eqref{eq:sec4err2} into \eqref{eq:sec4err} and an integration in time yields
	\begin{equation}\label{eq:sec4err3}
		\begin{aligned}
			\tilde{E}_{\tilde{\delta}}(\theta)(t) &\leq e^{-\frac{4}{3}\left(\tilde{\delta}-\delta_0\right)t}\tilde{E}_{\tilde{\delta}}(\theta)(t) + C\int_0^t e^{-\frac{4}{3}\left(\tilde{\delta}-\delta_0\right)\left(t-s\right)}\left(\|\eta'(s)\|^2+\|\eta(s)\|^2+\|\eta(0)\|^2\right)\,{\rm d}s
			\\
			&\leq \frac{3}{2}e^{-2\delta t}E(\theta)(0) + C\int_0^t e^{-2\delta\left(t-s\right)}\left(\|\eta'(s)\|^2+\|\eta(s)\|^2+\|\eta(0)\|^2\right)\,{\rm d}s,
		\end{aligned}
	\end{equation}
	where $\delta = \frac{2}{3}\left(\tilde{\delta}-\delta_0\right)$.
	Since $E(\theta)(0) = \half\theta(0) = 0$. A use of equation \eqref{eq:2.41} yields
	\begin{equation*}
		\begin{aligned}
			\|\eta'(s)\|^2 + \|\eta(s)\|^2 \leq Ch^4\left(\|u'(s)\|^2_2 + \|u(s)\|^2_2\right) 
			\leq Ch^4e^{-2\delta s}\left(\|u_1\|^2_2 + \|u_0\|^2_3\right).
		\end{aligned}
	\end{equation*}
	Using the above equation from equation \eqref{eq:sec4err3}, we obtain
	\begin{equation*}
		\|\theta(t)\|^2 \leq Ch^4te^{-2\delta t}\left(\|u_1\|^2_2 + \|u_0\|^2_3\right).
	\end{equation*}
	This completes the proof.
	\end{proof}

When $d = 2,$ the following theorem on the maximum norm estimate is obtained from the  super-convergence result  \eqref{estimate:super-cgt}.

\begin{thm}\label{thm:max-norm}
Let $u_{0h} = \Pi_hu_0$ 
%then $\theta(0) = 0$ 
and let $u_{1h}$ either the $L^2$-projection or the interpolation of $u_1$ in $X_h$. Then, there holds for $d = 2$, 
\begin{equation*}\label{estimate:max-norm}
	\|u(t) - u_h(t)\|_{L^\infty} \leq Ch^2\left(\log\left(\frac{1}{h}\right)\right)e^{-\delta t}\left(\|u_1\|_4+\|u_0\|_5\right).
\end{equation*}
\end{thm}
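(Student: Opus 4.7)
The plan is to decompose the error into the elliptic-projection part and the discrete-residual part,
\[
u - u_h = \eta + \theta, \qquad \eta = u - \Pi_h u, \quad \theta = \Pi_h u - u_h \in X_h,
\]
and to estimate each piece in $L^\infty$ separately, combining a Schatz--Wahlbin-type pointwise Ritz bound with a logarithmic inverse inequality applied to the already-proved super-convergence of $\theta$ in $H^1$.

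For the projection error $\eta$, I would invoke the standard pointwise estimate for the elliptic (Ritz) projection on quasi-uniform meshes in two dimensions,
\[
\|\eta(t)\|_{L^\infty} \leq C h^2 \log(1/h)\,\|u(t)\|_{W^{2,\infty}},
\]
and then convert the $W^{2,\infty}$ norm into an exponentially decaying quantity by using the 2D Sobolev embedding $H^3(\Omega) \hookrightarrow L^\infty(\Omega)$ applied to second derivatives (equivalently, elliptic regularity for $A = -\Delta$), which gives $\|u\|_{W^{2,\infty}} \leq C\,|u|_5$. The decay of $|u(t)|_5$ then follows directly from Theorem~\ref{thm:geno} with indices chosen so that the initial-data side collapses to $\|u_0\|_5 + \|u_1\|_4$, yielding $|u(t)|_5 \leq C e^{-\delta t}(\|u_0\|_5 + \|u_1\|_4)$.

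For the finite-element part $\theta \in X_h$, I would use the two-dimensional logarithmic inverse inequality
\[
\|\chi\|_{L^\infty} \leq C (\log(1/h))^{1/2}\,|\chi|_1 \qquad \forall\,\chi \in X_h,
\]
together with the $H^1$ super-convergence of $\theta$ that is already in hand: following the derivation of \eqref{estimate:super-cgt} from Theorem~\ref{thm:thetaetarel}, one obtains $|\theta(t)|_1 \leq C h^2 (1+t)^{1/2} e^{-\delta t}(\beta\|u_1\|_4 + \|u_1\|_3 + \|u_0\|_4)$. Combining these gives
\[
\|\theta(t)\|_{L^\infty} \leq C h^2 (\log(1/h))^{1/2}(1+t)^{1/2} e^{-\delta t}\bigl(\beta\|u_1\|_4 + \|u_1\|_3 + \|u_0\|_4\bigr),
\]
after which the algebraic factor $(1+t)^{1/2}$ is absorbed into the exponential by replacing $\delta$ with any strictly smaller rate $\delta' < \delta$ (the conclusion then holds with $\delta'$ in place of $\delta$, but this is harmless since the hypothesis on $\delta$ is an open one). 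Because $(\log(1/h))^{1/2} \leq \log(1/h)$ for small $h$, the $\eta$-contribution dominates, and summing the two bounds yields the stated estimate.

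The main obstacle is the $\eta$ piece: one must justify (or cite) the Schatz--Wahlbin-type pointwise Ritz estimate in the required form and then pick exactly the right $(j,k)$ in Theorem~\ref{thm:geno} so that $\|u(t)\|_{W^{2,\infty}}$ is controlled by the higher-order energy whose initial values are $\|u_0\|_5$ and $\|u_1\|_4$; once this is done, the $\theta$ piece is routine because both the logarithmic inverse inequality and the super-convergence of $|\theta|_1$ are already-established ingredients.
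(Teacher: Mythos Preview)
Your proposal is correct and follows essentially the same route as the paper: the same $\eta+\theta$ splitting, the same Schatz--Wahlbin pointwise Ritz estimate for $\eta$ combined with $\|u\|_{W^{2,\infty}}\le C|u|_5$ and Theorem~\ref{thm:geno}, and the same logarithmic inverse inequality plus the super-convergence bound \eqref{estimate:super-cgt} for $\theta$. You are in fact a bit more explicit than the paper about absorbing the $(1+t)^{1/2}$ factor into the exponential and about why $(\log(1/h))^{1/2}\le\log(1/h)$, both of which the paper glosses over.
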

\begin{proof}
For $d = 2$ and from the Sobolev inequality \cite[Lemma 6.4, p. 88]{thomee2007galerkin}, we obtain
\begin{equation*}
	\|\theta(t)\|_{L^\infty} \leq C\left(\log\left(\frac{1}{h}\right)\right)^\half\|\nabla\theta\| \leq C\left(\log\left(\frac{1}{h}\right)\right)^\half h^2(1+t)^\half e^{-2\delta t}\left({\xb\beta\|u_1\|_4^2} + \|u_1\|_3^2 + \|u_0\|_4^2\right).
\end{equation*}
Since 
\begin{equation*}
	\|\eta(t)\|_{L^\infty} \leq Ch^2\left(\log\left(\frac{1}{h}\right)\right)\|u(t)\|_{W^{2,\infty}} \leq Ch^2\left(\log\left(\frac{1}{h}\right)\right)e^{-\delta t}\left(\|u_1\|_4+\|u_0\|_5\right),
\end{equation*}
then, for $d=2,$ a use of triangle inequality completes the proof.
\end{proof}

%%%%%%%%%%%%%%%%%%%%%%%%%%%%%%%%%%%%%%%%%%%%%%%%%%%%%%%%%%%%%%%%%%%%%%%%%%
%%%%%%%%%%%%%%%%%%%%%%%%%%%%%%%%%%%%%%%%%%%%%%%%%%%%%%%%%%%%%%%%%%%%%%%%
\section{Completely Discrete Scheme}\label{sec:fulld}
%%%%%%%%%%%%%%%%%%%%%%%%%%%%%%%%%%%%%%%%%%%%%%%%%%%%%%%%%%%%%%%%%%%%%%%%%%
%%%%%%%%%%%%%%%%%%%%%%%%%%%%%%%%%%%%%%%%%%%%%%%%%%%%%%%%%%%%%%%%%%%%%%%%
This section focuses on a fully discrete scheme, which is based on a finite difference discretization in time for the semidiscrete problem  and it is shown that uniform exponential decay property is preserved for the discrete solution.

Let $k > 0$ be the time step and let $t_n = n k, \; n \geq 0$. Set 
$\varphi^n = \varphi(t_n)$,
$$
{\bar \partial}_t \varphi^n = \frac{\varphi^n - \varphi^{n-1}}{k} \;\; \mbox{and} \;\; \partial_t \varphi^n = \frac{\varphi^{n + 1} - \varphi^{n}}{k} \;\; \mbox{with} \;\; {\bar \partial}^0_t \varphi^n = \varphi^n. 
$$
%with ${\bar \partial}^0_t \varphi^n = \varphi^n$. \\
Define
$$
{\bar \partial}^{(j + 1)}_t \varphi^n = \frac{1}{k} \left( {\bar \partial}^{j}_t \varphi^n - 
{\bar \partial}^{j}_t \varphi^{n - 1} \right), \;\; j \geq 0.
$$
Let $\varphi^{n + \frac{1}{2}} = \frac{\varphi^{n + 1} + \varphi^n}{2}$ and set 
\begin{eqnarray*}
	%\delta_t \varphi^n &=&  \frac{\varphi^{n+1} - \varphi^{n-1}}{2 k} = {\bar \partial}_t \varphi^{n + \frac{1}{2}} = \frac{\varphi^{n + \frac{1}{2}} - \varphi^{n - \frac{1}{2}}}{k}, \\
	%{\hat \varphi}^n &=& \frac{1}{4} \left(\varphi^{n+1} + 2 \varphi^n + \varphi^{n-1} \right) = \frac{1}{2} \left(\varphi^{n + \frac{1}{2}} + \varphi^{n - \frac{1}{2}} \right), \\
	\partial_t {\bar \partial}_t \varphi^n &=& \frac{1}{k^2} \left(\varphi^{n + 1} - 2 \varphi^n + \varphi^{n -1} \right) = \frac{1}{k} \left(\partial_t \varphi^n - {\bar \partial}_t \varphi^n \right).
\end{eqnarray*}
The discrete time finite element approximations $U^n$ of
$u(t_n)$ are defined as solution of 
\begin{eqnarray} %\label{fullweak-1}
	({\bar \partial}_t\partial_t U^n, \chi) + \beta~a_h(\partial_t U^n, \chi) + \alpha (\partial_t U^n, \chi) 
	+ a_h(U^{n+1}, \chi) = 0, \; \chi \in X_h, \; n \geq 1, \label{Feqn3.1}
\end{eqnarray}
with $U^0 = u_{0,h}$ and $U^1 = u_{1,h},$ where $u_{0,h}, u_{1,h} \in X_h$ are appropriate approximations to be
defined later.

%%%%%%%%%%%%%%%%%%%%%%%%%%%%%%%%%%%%%%%%%%%%%%%%%%%%%%%%%%%%%%%%%%%%%%%%
%%%%%%%%%%%%%%%%%%%%%%%%%%%%%%%%%%%%%%%%%%%%%%%%%%%%%%%%%%%%%%%%%%%%%%%%

The next lemma proves some properties of the discrete energy
%\begin{enumerate}[label=(\roman*)]
 $$E^n(U) = \half\left(\|\partial_tU^n\|^2 + |U^{n+1}|^2_1\right).$$
\begin{Lem}\label{Lem:fulene}
	The solution $U^{n+1}$ for $n \geq 1$ of the fully discrete scheme \eqref{Feqn3.1} satisfies for $0\leq \vartheta \leq 1$
		\begin{equation}\label{eq:Fdereg}
			\begin{aligned}
				\bar{\partial}_tE^n(U) &\leq - \vartheta\left(\alpha+\beta\lambda_1\right)\|\partial_tU^n\|^2 - \left(1-\vartheta\right)\alpha\|\partial_tU^n\|^2 - \left(1 - \vartheta\right)\beta|\partial_tU^n|^2_1 
				\\
				&\qquad- \frac{k}{2}\left(\|\bar{\partial}_t\partial_tU^n\|^2 + |\partial_tU^n|^2_1\right).
			\end{aligned}
		\end{equation}
		Moreover, for the extended energy 
		\begin{equation}\label{def:disexen}
			\tilde{E}^n_\delta(U) = E^n(U) + \delta\left(\partial_tU^n, U^{n+1}\right),
		\end{equation}
        there holds
		\begin{equation}\label{eq:Fengrel}
			\frac{1}{2}E^n(U) \leq \tilde{E}^n_\delta(U) \leq \frac{3}{2}E^n(U), 
		\end{equation}
		provided $ 0 < \delta \leq \min\left(\frac{\alpha+\beta\lambda_1}{2}, \frac{\lambda_1}{2\left(\alpha+\beta\lambda_1\right)}\right)$.
	%\end{enumerate}
\end{Lem}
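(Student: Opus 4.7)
The plan is to prove the two parts separately, both by testing the fully discrete scheme with well-chosen elements and invoking two standard discrete identities together with the Poincar\'e inequality.

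For the energy-dissipation inequality \eqref{eq:Fdereg}, I will set $\chi = \partial_t U^n$ in \eqref{Feqn3.1}. The key algebraic identity is that for any sequence, $(\bar{\partial}_t \partial_t U^n, \partial_t U^n) = \tfrac{1}{2}\bar{\partial}_t \|\partial_t U^n\|^2 + \tfrac{k}{2}\|\bar{\partial}_t\partial_t U^n\|^2$, which follows from the fact that $\partial_t U^n - \bar{\partial}_t U^n = k\,\bar{\partial}_t\partial_t U^n$ together with the polarization identity $(a-b,a) = \tfrac{1}{2}(\|a\|^2 - \|b\|^2 + \|a-b\|^2)$. An analogous identity applied to the bilinear form gives $a_h(U^{n+1}, \partial_t U^n) = \tfrac{1}{2}\bar{\partial}_t |U^{n+1}|^2_1 + \tfrac{k}{2}|\partial_t U^n|^2_1$. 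The remaining two terms in the test identity reduce to $\alpha\|\partial_t U^n\|^2$ and $\beta|\partial_t U^n|^2_1$. Combining everything produces an \emph{equality} of the form $\bar{\partial}_t E^n(U) = -\alpha\|\partial_t U^n\|^2 - \beta|\partial_t U^n|^2_1 - \tfrac{k}{2}(\|\bar{\partial}_t\partial_t U^n\|^2 + |\partial_t U^n|^2_1)$, and the claimed inequality follows by splitting the damping term into a $\vartheta$-portion and a $(1-\vartheta)$-portion and using Poincar\'e \eqref{poincare} on the $\vartheta$-portion only, i.e. $\beta|\partial_t U^n|^2_1 \geq \beta\lambda_1\|\partial_t U^n\|^2$.

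For the equivalence \eqref{eq:Fengrel}, it suffices to show that $|\delta(\partial_t U^n, U^{n+1})| \leq \tfrac{1}{2} E^n(U)$. By Cauchy--Schwarz followed by Poincar\'e applied to $U^{n+1}\in X_h\subset H^1_0$, and then Young's inequality,
\begin{equation*}
|\delta(\partial_t U^n, U^{n+1})| \leq \delta\,\|\partial_t U^n\|\,\|U^{n+1}\| \leq \frac{\delta}{\sqrt{\lambda_1}}\|\partial_t U^n\|\,|U^{n+1}|_1 \leq \frac{\delta}{2\sqrt{\lambda_1}}\bigl(\|\partial_t U^n\|^2 + |U^{n+1}|^2_1\bigr) = \frac{\delta}{\sqrt{\lambda_1}} E^n(U).
\end{equation*}
It remains to check that $\delta/\sqrt{\lambda_1} \leq 1/2$ under the hypothesis. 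This is where the stated two-sided bound on $\delta$ is exactly tailored: multiplying the two upper bounds gives $\delta^2 \leq \tfrac{\alpha+\beta\lambda_1}{2}\cdot\tfrac{\lambda_1}{2(\alpha+\beta\lambda_1)} = \tfrac{\lambda_1}{4}$, hence $\delta \leq \tfrac{\sqrt{\lambda_1}}{2}$, and the desired estimate follows.

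The routine parts are the two discrete identities used in part one and the Cauchy--Schwarz/Young chain in part two. The only subtle point, and the step where I would expect to pause, is ensuring the symmetric two-sided range on $\delta$ is the sharpest one producing the factor $\tfrac{1}{2}$ on the cross-term, and verifying that the geometric mean of the two bounds in the $\min(\cdot,\cdot)$ gives precisely $\tfrac{\sqrt{\lambda_1}}{2}$; this explains why the lemma states the condition in this (otherwise mildly asymmetric) form, anticipating that either factor will be invoked individually in later applications.
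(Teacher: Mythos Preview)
Your proof is correct. Part~(i) is exactly the paper's argument, with the discrete identities spelled out more explicitly than in the paper. For part~(ii) there is a minor but genuine difference in the Young step: the paper applies an \emph{asymmetric} Young inequality with weight $\alpha+\beta\lambda_1$, obtaining
\[
\bigl|(\partial_tU^n,U^{n+1})\bigr|\le \frac{1}{2(\alpha+\beta\lambda_1)}\|\partial_tU^n\|^2+\frac{\alpha+\beta\lambda_1}{2\lambda_1}|U^{n+1}|^2_1,
\]
so that each of the two constraints $\delta\le\frac{\alpha+\beta\lambda_1}{2}$ and $\delta\le\frac{\lambda_1}{2(\alpha+\beta\lambda_1)}$ is invoked \emph{separately} to make each bracketed coefficient nonnegative. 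Your route via the symmetric Young inequality only needs the single consequence $\delta\le\sqrt{\lambda_1}/2$, which you correctly extract as the geometric mean of the two bounds. Your argument is slightly shorter; the paper's version, however, answers the question you raise in your final paragraph about why the hypothesis is phrased as a $\min$ of two asymmetric quantities: each constraint controls one of the two terms individually, and both are used again in this split form in the proof of Theorem~\ref{thm:fulene}.
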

\begin{proof}
   Choose $\chi = \partial_tU^n$ in \eqref{Feqn3.1}, a use of  the  \eqref{dispoincare} with $\vartheta \in (0,1)$, the Cauchy-Schwartz inequality and the Young's inequality shows
   
   \begin{equation*}
   \begin{aligned}
   		\bar{\partial}_tE^n(U) + \vartheta\left(\alpha+\beta\lambda_1\right)\|\partial_tU^n\|^2 &+ \left(1-\vartheta\right)\beta|\partial_tU^n|^2_1 + \left(1-\vartheta\right)\alpha\|\partial_tU^n\|^2 
   		\\
   		&\hspace*{2cm}+ \frac{k}{2}\left(\|\bar{\partial}_t\partial_tU^n\|^2 + |\partial_tU^n|^2_1\right) \leq 0.
   	\end{aligned}
   \end{equation*}
   This completes the proof of \eqref{eq:Fdereg}. For the proof of \eqref{eq:Fengrel}, an application  of the H\"older inequality with  \eqref{dispoincare} and the Young's inequality yields
   \begin{equation}\label{eq:extrineq}
   	\begin{aligned}
   		\Big\vert\left(\partial_tU^n, U^{n+1}\right)\Big\vert \leq \|\partial_tU^{n+1}\|\|U^{n+1}\| \leq \frac{1}{\sqrt{\lambda_1}}\|\partial_tU^n\|\,|U^{n+1}|_1 
   		\\
   		\leq \frac{1}{2\left(\alpha+\beta\lambda_1\right)}\|\partial_tU^n\|^2 + \frac{\left(\alpha+\beta\lambda_1\right)}{2\lambda_1}|U^{n+1}|^2_1.
   	\end{aligned}
   \end{equation} 
   Using \eqref{eq:extrineq} in the definition of the extended energy \eqref{def:disexen}, we arrive at
   \begin{equation*}
   	\begin{aligned}
   		\tilde{E}_\delta^n(U) &= E^n(U) + \delta\left(\partial_tU^n, U^{n+1}\right) 
   		\\
   		&\geq \frac{1}{2}E^n(U) + \left(\frac{1}{4} - \frac{1}{2\left(\alpha+\beta\lambda_1\right)}\delta\right)\|\partial_tU^n\|^2 + \left(\frac{1}{4} - \frac{\left(\alpha+\beta\lambda_1\right)}{2\lambda_1}\delta\right)|U^{n+1}|_1^2
   		\\
   		&\geq \frac{1}{2}E^n(U), \;\; \mbox{provided}\; 0 < \delta \leq \min\left(\frac{\alpha+\beta\lambda_1}{2},\frac{\lambda_1}{2\left(\alpha+\beta\lambda_1\right)}\right).
   	\end{aligned}
   \end{equation*}
   Again from the definition of the extended energy \eqref{def:disexen}, a use of \eqref{eq:extrineq} shows
   \begin{equation*}
   	\begin{aligned}
   		\tilde{E}_\delta^n(U) &= E^n(U) + \delta\left(\partial_tU^n, U^{n+1}\right) 
   		\\
   		&\leq \frac{3}{2}E^n(U) - \left(\frac{1}{4} - \frac{1}{2\left(\alpha+\beta\lambda_1\right)}\delta\right)\|\partial_tU^n\|^2 - \left(\frac{1}{4} - \frac{\left(\alpha+\beta\lambda_1\right)}{2\lambda_1}\delta\right)|U^{n+1}|_1^2
   		\\
   		&\leq \frac{3}{2}E^n(U), \;\; \mbox{provided}\; 0 < \delta \leq \min\left(\frac{\alpha+\beta\lambda_1}{2},\frac{\lambda_1}{2\left(\alpha+\beta\lambda_1\right)}\right).
   	\end{aligned}
   \end{equation*}
   This completes the rest of the proof.
\end{proof}

%%%%%%%%%%%%%%%%%%%%%%%%%%%%%%%%%%%%%%%%%%%%%%%%%%%%%%%%%%%%%%%%%%%%%%%%%%%%%%%%%%%%%%%%%%
%%%%%%%%%%%%%%%%%%%%%%%%%%%%%%%%%%%%%%%%%%%%%%%%%%%%%%%%%%%%%%%%%%%%%%%%%%%%%%%%%%%%%%%%%%
The following theorem deals with uniform exponential decay property of the completely discrete energy.
\begin{thm}\label{thm:fulene}
	For $0 < \delta \leq \min\left(\frac{\alpha + \beta\lambda_1}{2}, \frac{\lambda_1}{2\left(\alpha+\beta\lambda_1\right)}\right)$, the solution $U^{n+1}$ for $n \geq 1$ of \eqref{Feqn3.1} satisfies
	\begin{equation*}
		E^n(U) \leq 3e^{-\frac{1}{15}\delta t_n}E^0(U),
	\end{equation*}
	where $E^n(U) = \half\left(\|\partial_tU^n\|^2 + |U^{n+1}|^2_1\right)$. 
\end{thm}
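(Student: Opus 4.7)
The plan is to establish an exponential decay estimate for the extended energy $\tilde{E}^n_\delta(U)$ defined in \eqref{def:disexen} and then transfer it to $E^n(U)$ via the norm equivalence \eqref{eq:Fengrel}, which automatically produces the factor $3$ in the claimed bound through $E^n(U) \leq 2\tilde{E}^n_\delta(U)$ together with $\tilde{E}^0_\delta(U) \leq \tfrac{3}{2}E^0(U)$.

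First, I would compute $\bar{\partial}_t \tilde{E}^n_\delta(U) = \bar{\partial}_t E^n(U) + \delta\,\bar{\partial}_t(\partial_t U^n, U^{n+1})$ and use the discrete Leibniz identity $\bar{\partial}_t(\partial_t U^n, U^{n+1}) = (\bar{\partial}_t \partial_t U^n, U^{n+1}) + (\partial_t U^{n-1}, \partial_t U^n)$. Testing the scheme \eqref{Feqn3.1} with $\chi = U^{n+1}$ substitutes $(\bar{\partial}_t \partial_t U^n, U^{n+1}) = -|U^{n+1}|^2_1 - \beta\, a_h(\partial_t U^n, U^{n+1}) - \alpha(\partial_t U^n, U^{n+1})$, producing the crucial negative contribution $-\delta|U^{n+1}|^2_1$. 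Combining this with \eqref{eq:Fdereg} of Lemma \ref{Lem:fulene} (for a convenient $\vartheta \in (0,1]$) yields an expression for $\bar{\partial}_t \tilde{E}^n_\delta(U)$ consisting of dissipative terms $-\alpha\|\partial_t U^n\|^2$, $-\beta|\partial_t U^n|^2_1$, $-\delta|U^{n+1}|^2_1$ and $-\tfrac{k}{2}(\|\bar{\partial}_t\partial_t U^n\|^2 + |\partial_t U^n|^2_1)$, together with three cross terms $\delta\beta\, a_h(\partial_t U^n, U^{n+1})$, $\delta\alpha(\partial_t U^n, U^{n+1})$ and the retarded term $\delta(\partial_t U^{n-1}, \partial_t U^n)$.

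The first two cross terms I would bound by Cauchy--Schwarz, the discrete Poincar\'e inequality \eqref{dispoincare} and Young's inequality: under $\delta \leq \min(\tfrac{\alpha+\beta\lambda_1}{2}, \tfrac{\lambda_1}{2(\alpha+\beta\lambda_1)})$, the resulting multiples of $\|\partial_t U^n\|^2$ and $|U^{n+1}|^2_1$ are absorbed into $\alpha\|\partial_t U^n\|^2$, $\beta|\partial_t U^n|^2_1$ and $\delta|U^{n+1}|^2_1$. For the retarded piece I would use $\partial_t U^{n-1} = \partial_t U^n - k\,\bar{\partial}_t\partial_t U^n$ to rewrite $\delta(\partial_t U^{n-1}, \partial_t U^n) = \delta\|\partial_t U^n\|^2 - \delta k(\bar{\partial}_t\partial_t U^n, \partial_t U^n)$ and then estimate the second summand by Young's inequality, absorbing it into $\tfrac{k}{2}\|\bar{\partial}_t\partial_t U^n\|^2$ and an additional multiple of $\|\partial_t U^n\|^2$. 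Careful bookkeeping of the Young parameters and of $\vartheta$ yields $\bar{\partial}_t \tilde{E}^n_\delta(U) \leq -c_\ast \delta\bigl(\|\partial_t U^n\|^2 + |U^{n+1}|^2_1\bigr) = -2c_\ast\delta\, E^n(U)$ for an explicit positive constant $c_\ast$; one more application of \eqref{eq:Fengrel} converts this into $\bar{\partial}_t \tilde{E}^n_\delta(U) \leq -c\,\delta\, \tilde{E}^n_\delta(U)$.

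The conclusion then follows from a discrete Gronwall argument: rearranging gives $(1 + c\delta k)\tilde{E}^n_\delta(U) \leq \tilde{E}^{n-1}_\delta(U)$, which iterates to $\tilde{E}^n_\delta(U) \leq (1 + c\delta k)^{-n}\tilde{E}^0_\delta(U) \leq e^{-c\delta t_n/(1+c\delta k)}\tilde{E}^0_\delta(U)$, and \eqref{eq:Fengrel} converts this into $E^n(U) \leq 3e^{-c\delta t_n/(1+c\delta k)}E^0(U)$. A concrete choice of $\vartheta$ and Young parameters, together with the restriction on $\delta$, will fix the numerical value $\tfrac{1}{15}$ in the exponent. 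The main obstacle will be the retarded term $\delta(\partial_t U^{n-1}, \partial_t U^n)$: absent from the continuous proof, it couples to the previous time level and, together with the two other cross terms, is what forces the stricter bound $\tfrac{\lambda_1}{2(\alpha + \beta\lambda_1)}$ on $\delta$ in place of $\tfrac{\lambda_1}{\alpha + \beta\lambda_1}$ appearing in Theorem \ref{thm:diseng}; balancing its absorption into $\tfrac{k}{2}\|\bar{\partial}_t\partial_t U^n\|^2$ while still retaining a strictly negative coefficient in front of $E^n(U)$ is the most delicate point of the argument.
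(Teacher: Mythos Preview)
Your proposal is correct and follows the paper's strategy: compute $\bar{\partial}_t\tilde{E}^n_\delta(U)$, substitute the scheme to expose the negative term $-\delta|U^{n+1}|^2_1$, absorb cross terms via Young's inequality and \eqref{dispoincare}, and finish with a discrete Gronwall step combined with \eqref{eq:Fengrel}. The only difference is in the discrete product rule: the paper splits $\bar{\partial}_t(\partial_tU^n,U^{n+1}) = \|\partial_tU^n\|^2 + (\bar{\partial}_t\partial_tU^n, U^n)$ and therefore tests \eqref{Feqn3.1} with $\chi = U^n$, then writes $U^n = U^{n+1} - k\partial_tU^n$ to recover $-|U^{n+1}|^2_1$, picking up the $O(k)$ cross terms $\beta k|\partial_tU^n|^2_1$, $\alpha k\|\partial_tU^n\|^2$, and $k\,a_h(U^{n+1},\partial_tU^n)$; you split the other way, test with $\chi = U^{n+1}$, and obtain $-|U^{n+1}|^2_1$ directly at the price of the retarded term $\delta(\partial_tU^{n-1},\partial_tU^n)$, which after your rewriting produces only the single $O(k)$ cross term $-\delta k(\bar{\partial}_t\partial_tU^n,\partial_tU^n)$. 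Your route is thus marginally cleaner, though both lead to $\bar{\partial}_t\tilde{E}^n_\delta(U) \leq -c\delta E^n(U)$; the paper's explicit parameter choices give $c=\tfrac{1}{10}$, whence $-\tfrac{1}{15}\delta\tilde{E}^n_\delta(U)$ via \eqref{eq:Fengrel}, and a hidden smallness condition $\delta k \leq \tfrac{34}{205}$ is used to absorb the $O(k)$ terms (your Gronwall factor $e^{-c\delta t_n/(1+c\delta k)}$ is the more honest version of the paper's $e^{-\frac{1}{15}\delta t_n}$).
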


\begin{proof}
	From the definition of the extended energy \eqref{def:disexen}, there holds
	\begin{equation}\label{eq:Fderexen}
		\begin{aligned}
			\bar{\partial}_t\tilde{E}^n_\delta(U) &= \bar{\partial}_tE^n(U) + \delta\bar{\partial}_t\left(\partial_tU^n, U^{n+1}\right)
			\\
			&= \bar{\partial}_tE^n(U) + \frac{\delta}{k}\left(\left(\partial_tU^n, U^{n+1}\right)-\left(\partial_tU^n, U^n\right)+\left(\partial_tU^n, U^n\right)-\left(\partial_tU^{n-1}, U^n\right)\right)
			\\
			&= \bar{\partial}_tE^n(U) + \delta\|\partial_tU^n\|^2 + \delta\left(\bar{\partial}_t\partial_tU^n, U^n\right).
		\end{aligned}
	\end{equation}
	Setting  $\chi = U^n$  in \eqref{eq:Fderexen} with the relation $U^n = U^{n+1} - k\partial_tU^n$, a  use of the H\"older inequality with  \eqref{dispoincare} and  the Young's inequality implies
	\begin{equation*}
		\begin{aligned}
			\left(\bar{\partial}_t\partial_tU^n, U^n\right) &= -\beta a_h(\partial_tU^n, U^n) - \alpha\left(\partial_tU^n, U^n\right) - a_h(U^{n+1}, U^n)
			\\
			& = -\beta a_h(\partial_tU^n,U^{n+1}) + \beta k|\partial_t U^n|^2_1 - \alpha\left(\partial_tU^n,U^{n+1}\right) + \alpha k\|\partial_tU^n\|^2 
			\\
			&\qquad- |U^{n+1}|^2_1 + ka_h(U^{n+1}, \partial_tU^n)
			\\
			&\leq \frac{\beta^2}{2\delta_0}|\partial_tU^n|^2_1 + \frac{\delta_0}{2}|U^{n+1}|^2_1 + \left(\frac{\beta^2}{2\delta_1} + \frac{k^2\delta_1}{2}\right)|\partial_tU^n|^2_1 + \frac{\alpha^2}{2\lambda_1\delta_0}\|\partial_tU^n\|^2 
			\\
			&\qquad + \frac{\delta_0}{2}|U^{n+1}|^2_1 + \left(\frac{\alpha^2}{2\lambda_1\delta_1} + \frac{k^2\lambda_1\delta_1}{2}\right)\|\partial_tU^n\|^2 - |U^{n+1}|^2_1 
			\\
			&\quad\qquad + \frac{\delta_2}{2}|U^{n+1}|^2_1 + \frac{k^2}{2\delta_2}|\partial_tU^n|^2_1\\
			%\end{aligned}
	%\end{equation*}
   %\begin{equation*}
		%\begin{aligned}         
			&\quad\leq -\left(1 - \delta_0 - \frac{\delta_2}{2}\right)|U^{n+1}|^2_1 + \frac{\beta\left(\alpha+\beta\lambda_1\right)}{2\lambda_1}\left(\frac{1}{\delta_0} + \frac{1}{\delta_1}\right)|\partial_tU^n|^2_1
			\\
			& \qquad + \frac{\alpha\left(\alpha+\beta\lambda_1\right)}{2\lambda_1}\left(\frac{1}{\delta_0} + \frac{1}{\delta_1}\right)\|\partial_tU^n\|^2 + k^2\left(\delta_1+\frac{1}{2\delta_2}\right)|\partial_tU^n|^2_1.
		\end{aligned}
	\end{equation*}
	A use of the above inequality with \eqref{eq:Fdereg} with a choice of $\vartheta = \frac{21}{40}, \delta_0 = \frac{3}{4}, \delta_2 = \frac{2}{5}, \delta_1 = \frac{30}{17}$ in \eqref{eq:Fderexen} provided $0 < \delta \leq \min\left(\frac{\alpha+\beta\lambda_1}{2},\frac{\lambda_1}{2\left(\alpha+\beta\lambda_1\right)}\right)$ and $\delta k \leq \frac{34}{205}$, shows
	\begin{equation*}
		\begin{aligned}
			\bar{\partial}_t\tilde{E}^n_\delta(U) &\leq -\frac{\delta}{20}|U^{n+1}|^2_1 - \left(\frac{21}{40}\left(\alpha+\beta\lambda_1\right) - \delta\right)\|\partial_tU^n\|^2 - \beta\left(\frac{19}{40} - \frac{19}{20}\frac{\left(\alpha+\beta\lambda_1\right)}{\lambda_1}\delta\right)\,|\partial_tU^n|^2_1
			\\
			&\qquad - \alpha\left(\frac{19}{40} - \frac{19}{20}\frac{\left(\alpha+\beta\lambda_1\right)}{\lambda_1}\delta\right)\,\|\partial_tU^n\|^2 - \frac{k}{2}\|\bar{\partial}_t\partial_tU^n\|^2 - \frac{k}{2}\left(1 - \frac{205}{34}\delta k\right)\,|\partial_tU^n|^2_1
			\\
			&\leq -\frac{1}{10}\delta E^n(U) - \left(\frac{21}{40}\left(\alpha+\beta\lambda_1\right) - \frac{21}{20}\delta\right)\|\partial_tU^n\|^2 - \frac{k}{2}\left(1 - \frac{205}{34}\delta k\right)\,|\partial_tU^n|^2_1
			\\
			&\leq -\frac{1}{10}\delta E^n(U).
		\end{aligned}
	\end{equation*}
	 Now, proceed iteratively to arrive at
	\begin{equation*}
		\begin{aligned}
			\tilde{E}^n_\delta(U) \leq \frac{1}{1+\frac{1}{15}\delta k}\tilde{E}^{n-1}_\delta &\leq e^{-\frac{1}{15}\delta k}\tilde{E}^{n-1}_\delta(U) \leq \dots \leq e^{-\frac{1}{15}\delta t_n}\tilde{E}^0_\delta(U),
		\end{aligned}
	\end{equation*}
    and  from \eqref{eq:Fengrel}
	\begin{equation*}
    \begin{aligned}
			E^n(U) \leq 2\tilde{E}_\delta^n(U) &\leq 2e^{-\frac{1}{15}\delta t_n}\tilde{E}^0_\delta(U) \leq 3e^{-\frac{1}{15}\delta t_n} E^0(U).
		\end{aligned}
	\end{equation*}
	This completes the rest of the proof.
\end{proof}

%%%%%%%%%%%%%%%%%%%%%%%%%%%%%%%%%%%%%%%%%%%%%%%%%%%%%%%%%%%%%%%%%%%%%%%%
%%%%%%%%%%%%%%%%%%%%%%%%%%%%%%%%%%%%%%%%%%%%%%%%%%%%%%%%%%%%%%%%%%%%%%%%
\section{Some Generalizations}\label{sec:gen}
%%%%%%%%%%%%%%%%%%%%%%%%%%%%%%%%%%%%%%%%%%%%%%%%%%%%%%%%%%%%%%%%%%%%%%%%
%%%%%%%%%%%%%%%%%%%%%%%%%%%%%%%%%%%%%%%%%%%%%%%%%%%%%%%%%%%%%%%%%%%%%%%%%%%%%
This section focuses on some generalizations such as problems with nonhomogeneous right-hand sides, with space and time varying damping coefficients, and beam equations with a damping term. Finally, an abstract semi-discrete problem is formulated with some assumptions, which provides a uniform decay property, and, as a consequence, finite difference as well as spectral methods are briefly discussed, whose solutions show uniform asymptotic behavior.   

%%%%%%%%%%%%%%%%%%%%%%%%%%%%%%%%%%%%%%%%%%%%%%%%%%%%%%%%%%%%%%%%%%%%%%%%%%
\subsection{Inhomogeneous Equations} 
%%%%%%%%%%%%%%%%%%%%%%%%%%%%%%%%%%%%%%%%%%%%%%%%%%%%%%%%%%%%%%%%%%%%%%%%%%
This subsection discusses the strongly damped wave equation with time independent source term such as 
\begin{equation*}
	u'' + \beta Au' + \alpha u' + Au = f \quad \mbox{in}\quad \Omega\times (0,\infty)
\end{equation*}
with initial conditions
\[
 u(0) = u_0, \quad u'(0) = u_1.
\]
 Let $u_\infty$ be the unique solution of the steady-state equation
		\[
		 Au_\infty = f \quad \mbox{with}\quad u_\infty = 0 \quad \mbox{on} \quad \partial\Omega.
		\]
Then, we have the following theorem.     
\begin{thm}\label{thm:in-eng}
	Let  $w = u(t) - u_\infty.$ Then, there holds  
	\begin{enumerate}[label=(\roman*)]
		\item for $0<\delta < \min\left(\frac{\alpha+\beta\lambda_1}{2},\frac{\lambda_1}{\alpha+\beta\lambda_1}\right)$ or \, $0 < \delta = \frac{\alpha+\beta\lambda_1}{2},$
		\begin{equation*}
			E^{(k)}_{A^{(j)}}(w)(t) \leq C(\alpha,\beta,\lambda_1)e^{-2\delta t}E^{(k)}_{A^{(j)}}(w)(0) = C(\alpha,\beta,\lambda_1)e^{-2\delta t}\left(|w^{(k)}(0)|^2_{2j-1} + |w^{(k-1)}(0)|^2_{2j}\right).
		\end{equation*}
		\item For $0<\delta = \frac{\lambda_1}{\alpha+\beta\lambda_1},$ 
		\begin{equation*}
			|w^{(k)}(t)|^2_{2j-1} \leq C(\alpha,\beta,\lambda_1)e^{-2\delta t}E^{(k)}_{A^{(j)}}(w)(0) = C(\alpha,\beta,\lambda_1)e^{-2\delta t}\left(|w^{(k)}(0)|^2_{2j-1} + |w^{(k-1)}(0)|^2_{2j}\right).
		\end{equation*}
	\end{enumerate}
\end{thm}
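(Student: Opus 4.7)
The plan is to reduce the inhomogeneous problem to the homogeneous one already handled in Theorem \ref{thm:geno}, by subtracting off the steady state. The crucial structural fact is that $f$ (and hence $u_\infty$) is independent of time, so temporal derivatives of $w$ coincide with those of $u$ and the shift only affects the zeroth-order term $Au$, which is precisely what absorbs $f$.

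First I would derive the PDE satisfied by $w = u - u_\infty$. Since $u_\infty$ does not depend on $t$, we have $w^{(k)} = u^{(k)}$ for every $k \geq 1$, while $Aw = Au - Au_\infty = Au - f$. Substituting into the inhomogeneous equation gives
\begin{equation*}
w'' + \beta A w' + \alpha w' + Aw = u'' + \beta A u' + \alpha u' + Au - f = f - f = 0.
\end{equation*}
The boundary conditions transfer since $u=0$ and $u_\infty=0$ on $\partial\Omega$, so $w \in H^1_0(\Omega)$ at every time, with initial data $w(0) = u_0 - u_\infty$ and $w'(0) = u_1$. Hence $w$ solves exactly the homogeneous problem \eqref{se1.1}-\eqref{se1.3}.

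With this reduction in hand, Theorem \ref{thm:geno} applies verbatim to $w$ with the same admissible ranges of $\delta$. In case (i), this yields $E^{(k)}_{A^{(j)}}(w)(t) \leq C(\alpha,\beta,\lambda_1) e^{-2\delta t} E^{(k)}_{A^{(j)}}(w)(0)$, and since $w^{(k)} = u^{(k)}$ for $k \geq 1$ the right-hand side is expressed in terms of the initial quantities of $w$ as stated. Case (ii) follows identically by applying the corresponding part of Theorem \ref{thm:geno}.

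The only real obstacle is bookkeeping on regularity: for the higher-order norms $|\cdot|_{2j-1}$, $|\cdot|_{2j}$ to be meaningful one needs $u_\infty \in D(A^j)$, which follows from elliptic regularity for $Au_\infty = f$ provided $f$ is sufficiently smooth (implicitly assumed so that the hypotheses $w(0) \in D(A^j)$ and $w'(0)= u_1 \in D(A^{j-1/2})$ of Theorem \ref{thm:geno} are satisfied). No new energy estimates are required, and the constants depend on $(\alpha,\beta,\lambda_1)$ exactly as in the homogeneous case.
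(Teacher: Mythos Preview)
Your proposal is correct and follows essentially the same approach as the paper: you verify that $w=u-u_\infty$ satisfies the homogeneous equation \eqref{se1.1} with initial data $w(0)=u_0-u_\infty$, $w'(0)=u_1$, and then invoke Theorem~\ref{thm:geno} (the paper cites Theorems~\ref{thm:eng}--\ref{thm:Aeng}, but the idea is identical). Your added remark on the regularity of $u_\infty$ needed for the higher-order norms is a useful clarification that the paper leaves implicit.
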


\begin{proof}
Note that $w(t)$ satisfies
	\begin{equation}
		\begin{aligned} \label{eq-w}
			w'' + \beta Aw' + \alpha w' + Aw = 0 \quad \mbox{in} \quad \Omega\times (0,\infty),
			\\ 
			w(0) = u_0 - u_\infty, \quad w'(0) = u_1.
            %\label{eq-w-initial}
		\end{aligned}
	\end{equation}
Therefore, for problem \eqref{eq-w}, a use of techniques similar to the proof of decay properties in Theorems \ref{thm:eng}-\ref{thm:Aeng}, replacing $u$ by $w$ completes the rest of the proof.
\end{proof}
\begin{Rem}
	For $d = 2,3$, a use of the Sobolev embedding result shows
	\begin{equation*}
		\|w(t)\|_{L^\infty} \leq C\|w(t)\|_2 \leq Ce^{-\delta t}\left(\|u_1\|_3 + \|u_0 - u_\infty\|_3\right).
	\end{equation*}
	This implies $u(t) \to u_\infty$ as $t\to\infty$ with exponential decay rate $\delta.$
\end{Rem}
For the semidiscrete solution $w_h(t) = u_h(t) - u_{\infty,h}$ similar results as in Section \ref{sec:semd} hold true and therefore, $\|u_h(t) - u_{\infty,h}\|_{L^\infty} = O(e^{-\delta t})$.
\begin{Rem}
	If $f(t) = O(e^{-\delta_0 t})$, then the solution also decays exponentially with the decay rate $\delta^* = \min(\delta,\delta_0).$
\end{Rem}
%%%%%%%%%%%%%%%%%%%%%%%%%%%%%%%%%%%%%%%%%%%%%%%%%%%%%%%%%%%%%%%%%%%%%%%%%%%%%%%
\subsection{Time varying damping parameters} \label{subse-time-varying}
%%%%%%%%%%%%%%%%%%%%%%%%%%%%%%%%%%%%%%%%%%%%%%%%%%%%%%%%%%%%%%%%%%%%%%%%%%%%%%%
For equation \eqref{se1.1}, assume that $\alpha =\alpha(t)$ and $\beta= \beta(t)$ and then we derive, below, in terms of a theorem, the exponential decay behavior of the solution.
\begin{thm}\label{thm:a(t):eng}
	Let $\alpha = \alpha(t)$ and $\beta = \beta(t)$ in equation \eqref{se1.1}. Then, under the assumption that
	\[
	0 < \alpha_{\min} \leq \alpha(t) \leq \alpha_{\max}, \; 0 < \beta_{\min} \leq \beta(t) \leq \beta_{\max} \; \mbox{and} \; \alpha'(t) \geq 0,~ \beta'(t) \geq 0, \; \forall~ t \in [0,\infty),
	\]
	the following results hold for $u_0 \in D(A^{(j)})$ and $u_1 \in D(A^{(j-\half)})$ with $j\geq 1/2$ 
	\begin{enumerate}[label=(\roman*)]
		%\begin{enumerate}[label=(\roman*)]
    	\item if $0 < \delta < \min\left(\frac{\alpha_{\min} + \beta_{\min}\lambda_1}{2}, \frac{\lambda_1}{\alpha_{\max}+\beta_{\max}\lambda_1}\right)$ or \, $0 < \delta = \frac{\alpha_{\min} + \beta_{\min}\lambda_1}{2}$, then
    	\[
    	E_{A^{(j)}}(u)(t) \leq C(\alpha,\beta,\lambda_1)e^{-2\delta t}E_{A^{(j)}}(u)(0),
    	\]         
    	where $E_{A^{(j)}}(u)(t) = \half\left(|u'(t)|^2_{2j-1} + |u(t)|^2_{2j}\right)$.
    	\item If $0 < \delta = \frac{\lambda_1}{\alpha_{\max} + \beta_{\max}\lambda_1}$, then
    	\[
    	|u'(t)|^2_{2j-1} \leq C e^{-2\delta t}E_{A^{(j)}}(u)(0).
    	\]
    \end{enumerate}	 
\end{thm}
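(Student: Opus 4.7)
The plan is to run the machinery of Theorems~\ref{thm:eng}--\ref{thm:geno} with the single modification that the coefficients now depend on~$t$. Applying the change of variables $u_\delta = e^{\delta t}u$ to \eqref{se1.1} gives
\begin{equation*}
u_\delta'' + \beta(t) A u_\delta' + (\alpha(t) - 2\delta)u_\delta' + (1 - \beta(t)\delta) A u_\delta - \delta(\alpha(t) - \delta)u_\delta = 0,
\end{equation*}
which has exactly the form of \eqref{se11.4} with $t$-varying $\alpha,\beta$.

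For the base case $j = \half$, I would test with $v = u_\delta'$ as in Theorem~\ref{thm:eng}. The only new feature is that the two identities
\begin{equation*}
(1-\beta(t)\delta)\,a(u_\delta, u_\delta') = \half\frac{d}{dt}\bigl[(1-\beta(t)\delta)|u_\delta|_1^2\bigr] + \frac{\delta\beta'(t)}{2}|u_\delta|_1^2
\end{equation*}
and
\begin{equation*}
-\delta(\alpha(t)-\delta)(u_\delta, u_\delta') = -\half\frac{d}{dt}\bigl[\delta(\alpha(t)-\delta)\|u_\delta\|^2\bigr] + \frac{\delta\alpha'(t)}{2}\|u_\delta\|^2
\end{equation*}
each produce an extra term $\tfrac{\delta}{2}\beta'(t)|u_\delta|_1^2$ or $\tfrac{\delta}{2}\alpha'(t)\|u_\delta\|^2$ when the time-dependent coefficient is moved inside $\tfrac{d}{dt}$. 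Setting
\begin{equation*}
\tilde E(u_\delta)(t) = \half\bigl(\|u_\delta'\|^2 + (1-\beta(t)\delta)|u_\delta|_1^2 - \delta(\alpha(t)-\delta)\|u_\delta\|^2\bigr),
\end{equation*}
the energy identity reads
\begin{equation*}
\frac{d}{dt}\tilde E(u_\delta)(t) + \beta(t)|u_\delta'|_1^2 + (\alpha(t)-2\delta)\|u_\delta'\|^2 + \frac{\delta\beta'(t)}{2}|u_\delta|_1^2 + \frac{\delta\alpha'(t)}{2}\|u_\delta\|^2 = 0.
\end{equation*}
The monotonicity assumption $\alpha',\beta' \geq 0$ is exactly what is needed to drop the last two terms with the correct sign. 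Using $\alpha(t) \geq \alpha_{\min}$, $\beta(t) \geq \beta_{\min}$ together with \eqref{poincare} yields
\begin{equation*}
\frac{d}{dt}\tilde E(u_\delta)(t) + (\alpha_{\min}+\beta_{\min}\lambda_1 - 2\delta)\|u_\delta'\|^2 \leq 0,
\end{equation*}
so $\tilde E(u_\delta)$ is non-increasing provided $\delta \leq (\alpha_{\min}+\beta_{\min}\lambda_1)/2$. For the matching lower bound on $\tilde E$ from below, I use the upper bounds $\alpha(t) \leq \alpha_{\max}$ and $\beta(t) \leq \beta_{\max}$ together with \eqref{poincare} exactly as in Theorem~\ref{thm:eng}, obtaining the same bound with $\alpha + \beta\lambda_1$ replaced by $\alpha_{\max}+\beta_{\max}\lambda_1$; this is where the condition $\delta < \lambda_1/(\alpha_{\max}+\beta_{\max}\lambda_1)$ enters. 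Combining the two bounds and noting $\tilde E(u_\delta)(0) \leq C(\alpha_{\max},\beta_{\max},\lambda_1)E_{A^{(j)}}(u)(0)$ gives~(i); the borderline case~(ii) follows by the same device as Theorem~\ref{thm:eng}(ii), retaining only $\half\|u_\delta'\|^2$ in the lower bound.

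For $j > \half$, since $\alpha(t),\beta(t)$ are scalar functions of $t$ alone they commute with $A$, so I would simply test instead with $v = A^{2j-1}u_\delta'$ and repeat the computation in the $|\cdot|_{2j-1}$ and $|\cdot|_{2j}$ norms; the $\alpha',\beta'$-contributions are again absorbed by monotonicity, and the initial-data regularity $u_0 \in D(A^{(j)})$, $u_1 \in D(A^{(j-\half)})$ is precisely what is needed to make $\tilde E(u_\delta)(0)$ finite. The main obstacle is purely bookkeeping: verifying that differentiating the time-dependent coefficients inside $\tfrac{d}{dt}$ produces contributions of the correct sign rather than indefinite cross-terms that would need to be absorbed by Young's inequality. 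The monotonicity hypothesis is designed for exactly this purpose, and the reason the admissible range of $\delta$ in~(i) tightens compared with Theorem~\ref{thm:geno} is precisely the gap between $(\alpha_{\min},\beta_{\min})$, which controls dissipation, and $(\alpha_{\max},\beta_{\max})$, which controls the coercivity of $\tilde E$.
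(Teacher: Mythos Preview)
Your proposal is correct and follows essentially the same route as the paper: transform to $u_\delta$, test with $u_\delta'$, identify the two extra terms $\tfrac{\delta}{2}\beta'(t)|u_\delta|_1^2$ and $\tfrac{\delta}{2}\alpha'(t)\|u_\delta\|^2$ arising from the time-dependence, drop them via the monotonicity hypothesis, and then use $(\alpha_{\min},\beta_{\min})$ for the dissipation bound and $(\alpha_{\max},\beta_{\max})$ for the coercivity of $\tilde E$ from below. The paper likewise reduces to $j=\tfrac12$ and appeals to the argument of Theorems~\ref{thm:Aeng}--\ref{thm:geno} for higher $j$, which is exactly your testing with $A^{2j-1}u_\delta'$.
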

\begin{proof}
It is enough to prove the result for $j=1/2,$ that is,
\begin{equation*}
		E(u)(t) \leq Ce^{-2\delta t}E(u)(0),
\end{equation*}
where $E(u)(t) = \half\left(\|u'\|^2 + |u|^2_1\right).$ Then, the rest of the analysis follows from the proof technique of Theorems~\ref{thm:Aeng} and \ref{thm:geno} for $k=1$. If $\alpha$ and $\beta$ depends on the time $t$, then equation \eqref{se11.4} will change into
	\begin{equation}\label{se11.4t}
		\begin{aligned}
			(u''_\delta, v) + \beta(t)~a(u'_\delta, v) + (\alpha(t) - 2\delta)(u'_\delta, v) &+ (1 - \beta(t)\delta)~a(u_\delta, v)
			\\
			&- \delta(\alpha(t) - \delta)(u_\delta,v) = 0,
		\end{aligned}
	\end{equation}
	with
	\begin{eqnarray*}
		u_\delta(x,0) = u_0 \; \mbox{and} \; (u_\delta)'(x,0) = \delta u_0 + u_1 \; \mbox{in} \; \Omega.
	\end{eqnarray*}
	By setting $v = u_\delta'$ and using \eqref{poincare} in the  equation \eqref{se11.4t}, we obtain
	\begin{equation}\label{t-in}
		\frac{{\rm d}}{{\rm d}t}\mathcal{\tilde{E}}(u_\delta)(t) + \left(\alpha(t) + \beta(t)\lambda_1 - 2\delta\right)\|u'_\delta\|^2 + \frac{\delta}{2}\beta'(t)|u_\delta|^2_1 + \frac{\delta}{2}\alpha'(t)\|u_\delta\|^2 \leq 0,
	\end{equation}
	where
	\begin{equation*}
		\mathcal{\tilde{E}}(u_\delta)(t) = \half\left(\|u'_\delta\|^2 + \left(1 - \beta(t)\delta\right)|u_\delta|^2_1 - \delta\left(\alpha(t) - \delta\right)\|u_\delta\|^2\right).
	\end{equation*}
	An integration of \eqref{t-in} with respect to time yields
    for $\delta \leq \frac{\alpha_{\min} + \beta_{\min}\lambda_1 }{2}$ and $\beta'(t) \geq 0, \alpha'(t) \geq 0$
	\begin{equation} \label{se5.63}
		\begin{aligned}
			\mathcal{\tilde{E}}(u_\delta)(t) \leq \mathcal{\tilde{E}}(u_\delta)(0) \leq C E(u)(0). %= \half\left(\|u_1 - \delta u_0\|^2 + \left(1 - \beta(0)\delta\right)\|A^\half u_0\|^2 - \delta(\alpha(0) - \delta)\|u_0\|^2\right)
		\end{aligned}
	\end{equation} 
	Now, a use of \eqref{poincare} shows
	\begin{equation*}\label{eq:matet}
		\begin{aligned}
			\mathcal{\tilde{E}}(u_\delta)(t) \geq \half\left(\|u'_\delta\|^2 + \left(1 - \frac{\alpha_{\max} + \beta_{\max}\lambda_1}{\lambda_1}\delta\right)|u_\delta|^2_1 + \delta^2\|u_\delta\|^2\right)	
			> \gamma_0 E(u_\delta)(t),
		\end{aligned}
	\end{equation*}  
	provided $\delta < \gamma_0$, where $\gamma_0 = \frac{\lambda_1}{\alpha_{\max} + \beta_{\max}\lambda_1}$. Now, when $\delta = \gamma_0$, then there holds
	\begin{equation*}
		\mathcal{\tilde{E}}(u_\delta)(t) \geq \half\left(\|u'_\delta\|^2 + \delta^2\|u_\delta\|^2\right) \geq e^{2\delta t}\|u'\|^2. 
	\end{equation*}
	This completes the rest of the proof.
\end{proof}

%%%%%%%%%%%%%%%%%%%%%%%%%%%%%%%%%%%%%%%%%%%%%%%%%%%%%%%%%%%%%%%%%%%%%%%%%%%%%%%%%
Below, we discuss the decay estimates of higher order in time energy. 
\begin{thm}\label{thm:a(t):1steng}
Under the hypothesis of  Theorem ~\ref{thm:a(t):eng} and $\max_{t\in [0,\infty)} (|\beta''(t)|, |\alpha''(t)|)\leq M,$  there hold
	\begin{enumerate}[label=(\roman*)]
		\item if $\delta < \min\left(\frac{\alpha_{\min} + \beta_{\min}\lambda_1}{2}, \frac{\lambda_1}{\alpha_{\max} + \beta_{\max}\lambda_1}\right)$ or ~ $\delta = \frac{\alpha_{\min} + \beta_{\min}\lambda_1}{2}$, then
		\begin{equation*}
			E(u')(t) \leq C(1+t)\;e^{-2\delta t}\;( E(u')(0) + E_{A^{1/2}}(u)(0)),
		\end{equation*}
		where $E(u')(t) = \half\left(\|u''\|^2 + |u'|^2_1\right)$.
		\item If $\delta = \frac{\lambda_1}{\alpha_{\max} + \beta_{\max}\lambda_1}$, then
		\begin{equation*}
			\|u''\|^2 \leq C  (1+t)\;e^{-2\delta t} ( E(u')(0) + E_{A^{1/2}}(u)(0)).
		\end{equation*}
	\end{enumerate}
\end{thm}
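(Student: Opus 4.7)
The strategy is to apply the exponential--shift argument of Theorem~\ref{thm:a(t):eng} to $w:=u'$ rather than to $u$ itself. Differentiating the time--varying equation \eqref{se1.1} once in $t$ yields
\begin{equation*}
w'' + \beta(t)\,Aw' + \alpha(t)\,w' + \bigl(1+\beta'(t)\bigr)Aw + \alpha'(t)\,w = 0,
\end{equation*}
so $w$ satisfies a strongly damped wave equation with a modified stiffness $(1+\beta'(t))A$ and an additional zero--order term $\alpha'(t)\,w$ arising from the $t$--dependence of the damping coefficients. Setting $w_\delta:=e^{\delta t}w$ and repeating the manipulations of Subsection~\ref{subse-time-varying} that led to \eqref{se11.4t} produces the variational identity
\begin{equation*}
(w_\delta'',v)+\beta\,a(w_\delta',v)+(\alpha-2\delta)(w_\delta',v)+(1+\beta'-\beta\delta)\,a(w_\delta,v)+(\alpha'-\delta\alpha+\delta^2)(w_\delta,v)=0.
\end{equation*}

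Testing with $v=w_\delta'$ and applying the product rule to pull the $t$--dependent coefficients through $\frac{\rm d}{{\rm d}t}$ yields, after a short computation,
\begin{equation*}
\frac{\rm d}{{\rm d}t}\tilde{\mathcal E}(w_\delta)+\beta|w_\delta'|_1^2+(\alpha-2\delta)\|w_\delta'\|^2+\half\delta\beta'|w_\delta|_1^2+\half\delta\alpha'\|w_\delta\|^2=\half\beta''|w_\delta|_1^2+\half\alpha''\|w_\delta\|^2,
\end{equation*}
where $\tilde{\mathcal E}(w_\delta)=\half\bigl(\|w_\delta'\|^2+(1+\beta'-\beta\delta)|w_\delta|_1^2+(\alpha'-\delta\alpha+\delta^2)\|w_\delta\|^2\bigr)$. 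For $\delta\leq(\alpha_{\min}+\beta_{\min}\lambda_1)/2$, the Poincar\'e inequality \eqref{poincare} renders $\beta|w_\delta'|_1^2+(\alpha-2\delta)\|w_\delta'\|^2\geq 0$, and by monotonicity $\alpha',\beta'\geq 0$ the corresponding terms on the left are nonnegative and can be discarded. Using $|\alpha''|,|\beta''|\leq M$, the right--hand side is dominated by $\tfrac{M}{2}e^{2\delta t}(|u'(t)|_1^2+\|u'(t)\|^2)$, and Theorem~\ref{thm:a(t):eng} applied with $j=\half$ and $j=1$ gives $\|u'(t)\|^2+|u'(t)|_1^2\leq C e^{-2\delta t}\bigl(E(u')(0)+E_{A^{1/2}}(u)(0)\bigr)$, so this right--hand side is uniformly bounded in $t$ by a constant depending only on the initial data. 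Integration in $t$ from $0$ to $t$ then yields $\tilde{\mathcal E}(w_\delta)(t)\leq\tilde{\mathcal E}(w_\delta)(0)+Ct\bigl(E(u')(0)+E_{A^{1/2}}(u)(0)\bigr)$, which is precisely the source of the $(1+t)$ factor in the statement.

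The final step is a coercivity--type lower bound on $\tilde{\mathcal E}(w_\delta)$. Exactly as in the proofs of Theorems~\ref{thm:eng} and \ref{thm:a(t):eng}, the identity $w_\delta'=\delta w_\delta+e^{\delta t}w'$ together with the parallelogram--type estimate $\|w_\delta'\|^2+\delta^2\|w_\delta\|^2\geq\half e^{2\delta t}\|u''\|^2$, and a Poincar\'e bound applied to the combined $|w_\delta|_1^2,\,\|w_\delta\|^2$ contribution, give $\tilde{\mathcal E}(w_\delta)(t)\geq c\,\gamma_0\,e^{2\delta t}E(u')(t)$ with $\gamma_0=1-\delta(\alpha_{\max}+\beta_{\max}\lambda_1)/\lambda_1>0$ whenever $\delta<\lambda_1/(\alpha_{\max}+\beta_{\max}\lambda_1)$, yielding (i). At the endpoint $\delta=\lambda_1/(\alpha_{\max}+\beta_{\max}\lambda_1)$ the $|w_\delta|_1^2$ coefficient degenerates to zero, so only the $\|u''\|^2$ part of the lower bound survives, giving (ii). The principal technical obstacle will be the careful bookkeeping in the product--rule step: the $-\delta\beta'$ and $-\delta\alpha'$ terms generated by differentiating the $t$--dependent coefficients must be retained on the left as nonnegative \emph{good} quantities, while the $\beta''$ and $\alpha''$ terms on the right can only be controlled through the already--established lower--order decay of Theorem~\ref{thm:a(t):eng}; any careless absorption of the latter via Gronwall would trade the desired $(1+t)$ factor for an exponentially growing $e^{Mt}$ and destroy the uniformity claim.
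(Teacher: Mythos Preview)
Your proposal is correct and follows essentially the same route as the paper: differentiate \eqref{se5.68} once in $t$, set $w=u'$ and $w_\delta=e^{\delta t}w$, test with $w_\delta'$, discard the nonnegative $\alpha',\beta'$ contributions on the left, bound the $\alpha'',\beta''$ right-hand side via the already established decay of Theorem~\ref{thm:a(t):eng}, integrate to produce the $(1+t)$ factor, and finish with the coercivity lower bound on the shifted energy. The only cosmetic difference is packaging: you absorb the $\tfrac{\beta'}{2}|w_\delta|_1^2+\tfrac{\alpha'}{2}\|w_\delta\|^2$ terms directly into your modified energy $\tilde{\mathcal E}(w_\delta)$, whereas the paper writes the total energy as $\tilde{\mathcal E}(w_\delta)+\tfrac{\beta'}{2}|w_\delta|_1^2+\tfrac{\alpha'}{2}\|w_\delta\|^2$ with $\tilde{\mathcal E}$ denoting the constant-coefficient part; the two expressions coincide.
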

\begin{proof}
If the damping parameters in the equation \eqref{se1.4} depends on the time, then rewriting \eqref{se1.4} as
\begin{equation}\label{se5.68}
	(u'',v) + \beta(t)a (u',v) + \alpha(t)(u',v) + a(u,v) = 0, \; \forall \; v \in D(A^{1/2}).
\end{equation}
Differentiate \eqref{se5.68} with respect to time and choose $u' = w$ and $w_\delta = e^{\delta t}w$ to arrive at 
\begin{equation*} \label{se5.59}
	\begin{aligned}
		\left(w_\delta'', v\right) + \beta(t)a(w_\delta', v) &+ \left(1-\beta(t)\delta\right)a(w_\delta, v) + \left(\alpha(t) - 2\delta\right)\left(w_\delta', v\right)
		\\
		 &- \delta\left(\alpha(t)-\delta\right)\left(w_\delta, v\right) + \beta'(t)a(w_\delta, v) + \alpha'(t)\left(w_\delta, v\right) = 0.
	\end{aligned}
\end{equation*}
A choice of $v = w_\delta'$ in \eqref{se5.68} yields
\begin{equation}\label{se5.70}
	\begin{aligned}
		\frac{{\rm d}}{{\rm d}t}&\left(\mathcal{\tilde{E}}(w_\delta)(t) + \frac{\beta'(t)}{2}|w_\delta|^2_1 + \frac{\alpha'(t)}{2}\|w_\delta\|^2\right) + \left(\alpha(t) + \beta(t)\lambda_1 - 2\delta\right)\|w_\delta'\|^2 
		\\
		&\hspace*{2cm} + \frac{\beta'(t)\delta}{2}|w_\delta|^2_1 + \frac{\alpha'(t)\delta}{2}\|w_\delta\|^2 = \frac{\beta''(t)}{2}|w_\delta|^2_1 + \frac{\alpha''(t)}{2}\|w_\delta\|^2 \\
        &\hspace*{2cm} \leq M (1+\lambda_1^{-1}) e^{2\delta t}\;E_{A^{1/2}}(u)(t) \\
        & \hspace*{2cm} \leq C\; E_{A^{1/2}}(u)(0),
	\end{aligned}
\end{equation}
where
\begin{equation*}
	\mathcal{\tilde{E}}(w_\delta)(t) = \half\left(\|w_\delta'\|^2 + \left(1 - \beta(t)\delta\right)|w_\delta|^2_1 - \delta\left(\alpha(t) - \delta\right)\|w_\delta\|^2\right).
\end{equation*}
Due to the assumptions in $\alpha(t), \beta(t)$ and the restriction in $\delta,$ the second, third, and fourth terms on the left-hand side of \eqref{se5.70} are nonnegative. An integration in time with boundedness of $\beta''$ and $\alpha''$ together with the estimate of Theorem ~\ref{thm:a(t):eng} for $j=1/2$ shows
\begin{equation*}
	\begin{aligned}
		\mathcal{\tilde{E}}(w_\delta)(t) &\leq \mathcal{\tilde{E}}(w_\delta)(0) + \frac{\beta'(0)}{2}|w_\delta(0)|^2_1 + \frac{\alpha'(0)}{2}\|w_\delta(0)\|^2+ C \;\mathcal{{E}}(w_\delta)(t)
		\\
		&\leq \mathcal{\tilde{E}}(w_\delta)(0) + \left(\frac{\beta'(0)}{2} + \frac{\alpha'(0)}{2\lambda_1}\right)|w_\delta(0)|^2_1 
        \\
        &\leq C \;\big(\mathcal{\tilde{E}}(w_\delta)(0) + t E_{A^{1/2}}(u)(0).
	\end{aligned}
\end{equation*}
A use of \eqref{se5.63} by replacing $u_{\delta}$ by $w_{\delta}$ completes the rest of the proof.
\end{proof}

Similarly, the decay property of the higher-order derivative with respect to time of the energy function can be derived, but we refrain from giving a proof.

%%%%%%%%%%%%%%%%%%%%%%%%%%%%%%%%%%%%%%%%%%%%%%%%%%%%%%%%%%%%%%%%%%%%%%%%%%%%%%%%%
\subsection{Space varying damping parameters}\label{subse-space-varying}
This subsection deals with equation \eqref{se1.1}, when $\alpha = \alpha(x)$ and $\beta = \beta(x)$  and with the related decay property of the solution.
 \begin{thm}\label{thm:spacdep}
	Let $\alpha = \alpha(x)$ and $\beta = \beta(x)$ in the equation \eqref{se1.1}, i.e.,
		\begin{equation}\label{eq:spadepeq}
			u'' + A^{1/2} (\beta(x) A^{1/2} u) + \alpha(x)u' + Au = 0 \; \mbox{in} \; \Omega\times(0,\infty).
		\end{equation} 
		Assume that
		\[
		0 < \tilde{\alpha}_{\min} \leq \alpha(x) \leq \tilde{\alpha}_{\max}, ~ \mbox{and}~ 0 < \tilde{\beta}_{\min} \leq \beta(x) \leq \tilde{\beta}_{\max} ~ %\mbox{and}~ {\color{red} \nabla_x\cdot\beta(x) \leq 0, %~  \Delta_x\beta(x) \leq 0, ~ \forall~ x \in \Omega.}
		\]
		Then, the following holds true
		\begin{enumerate}[label=(\roman*)]
			\item if $\delta < \min\left(\frac{\tilde{\alpha}_{\min} + \tilde{\beta}_{\min}\lambda_1}{2}, \frac{\lambda_1}{\tilde{\alpha}_{\max} + \tilde{\beta}_{\max}\lambda_1}\right)$ or  ~ $\delta = \frac{\tilde{\alpha}_{\min} + \tilde{\beta}_{\min}\lambda_1}{2}$, then
			\begin{equation*}
				E(u)(t) \leq Ce^{-2\delta t}E(u)(0),
			\end{equation*}
			\item if $\delta = \frac{\lambda_1}{\tilde{\alpha}_{\max} + \tilde{\beta}_{\max}\lambda_1}$, then
			\begin{equation*}
				\|u'\|^2 \leq C E(u)(0).
			\end{equation*}
		\end{enumerate}
	\end{thm}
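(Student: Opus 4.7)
The plan is to follow the energy method used in the proof of Theorem \ref{thm:eng}, adapted to the variable-coefficient setting. I would first introduce the weighted bilinear form $b(w,v) := (\beta(x) A^{\half} w, A^{\half} v)$ associated with the strong-damping term and the weighted $L^2$ pairing $(\alpha(x) w, v)$ for the weak-damping term. Setting $u_\delta = e^{\delta t} u$, the variational formulation of \eqref{eq:spadepeq} becomes
\begin{equation*}
(u_\delta'', v) + b(u_\delta', v) + ((\alpha - 2\delta) u_\delta', v) + a(u_\delta, v) - \delta\, b(u_\delta, v) - \delta\, ((\alpha - \delta) u_\delta, v) = 0.
\end{equation*}
Compared with \eqref{se11.4}, the crucial structural change is that the scalar factor $(1-\beta\delta)$ multiplying $a(u_\delta,v)$ is now replaced by the combination $a(u_\delta,v) - \delta\, b(u_\delta,v)$, with an analogous change in the lower-order term.

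Next, testing with $v = u_\delta'$ and using the fact that $\alpha(x), \beta(x)$ are time-independent (so that $(\alpha u_\delta, u_\delta') = \half \frac{{\rm d}}{{\rm d}t} (\alpha u_\delta, u_\delta)$ and $b(u_\delta, u_\delta') = \half \frac{{\rm d}}{{\rm d}t} b(u_\delta, u_\delta)$), I would obtain
\begin{equation*}
\frac{{\rm d}}{{\rm d}t}\tilde{E}(u_\delta)(t) + b(u_\delta', u_\delta') + ((\alpha - 2\delta) u_\delta', u_\delta') = 0,
\end{equation*}
where $\tilde{E}(u_\delta)(t) := \half\left(\|u_\delta'\|^2 + |u_\delta|^2_1 - \delta\, b(u_\delta, u_\delta) - \delta (\alpha u_\delta, u_\delta) + \delta^2\|u_\delta\|^2\right)$. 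The dissipation is bounded from below using the lower bounds on the coefficients and the Poincar\'e inequality: $b(u_\delta', u_\delta') \geq \tilde{\beta}_{\min} \lambda_1 \|u_\delta'\|^2$ and $((\alpha-2\delta) u_\delta', u_\delta') \geq (\tilde{\alpha}_{\min} - 2\delta)\|u_\delta'\|^2$, yielding a nonnegative dissipation whenever $\delta \leq (\tilde{\alpha}_{\min} + \tilde{\beta}_{\min}\lambda_1)/2$. Integration in time then gives $\tilde{E}(u_\delta)(t) \leq \tilde{E}(u_\delta)(0) \leq C E(u)(0)$.

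Finally, I would lower-bound $\tilde{E}(u_\delta)$ by $e^{2\delta t} E(u)(t)$ up to multiplicative constants. Here the upper bounds on the coefficients enter: $b(u_\delta,u_\delta) \leq \tilde{\beta}_{\max} |u_\delta|^2_1$ and $(\alpha u_\delta, u_\delta) \leq (\tilde{\alpha}_{\max}/\lambda_1) |u_\delta|^2_1$ by Poincar\'e, so
\begin{equation*}
|u_\delta|^2_1 - \delta\, b(u_\delta, u_\delta) - \delta\, (\alpha u_\delta, u_\delta) \geq \gamma_0\, |u_\delta|^2_1, \qquad \gamma_0 := 1 - \frac{\tilde{\alpha}_{\max} + \tilde{\beta}_{\max}\lambda_1}{\lambda_1}\,\delta,
\end{equation*}
which is nonnegative precisely when $\delta \leq \lambda_1/(\tilde{\alpha}_{\max} + \tilde{\beta}_{\max}\lambda_1)$. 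Writing $\|u_\delta'\|^2 = \|\delta u_\delta + e^{\delta t} u'\|^2$ and combining this with the $\delta^2\|u_\delta\|^2$ contribution in exactly the way done in the proof of Theorem \ref{thm:eng} produces $\tilde{E}(u_\delta)(t) \geq C e^{2\delta t} E(u)(t)$ in case (i), and at the endpoint $\delta = \lambda_1/(\tilde{\alpha}_{\max} + \tilde{\beta}_{\max}\lambda_1)$ the weaker bound retaining only $\|u'\|^2$, giving case (ii).

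The main technical subtlety, compared with the constant-coefficient proof, is the careful bookkeeping of where the minima versus maxima of $\alpha,\beta$ appear: $\tilde{\alpha}_{\min}, \tilde{\beta}_{\min}$ control the admissible decay rate via the dissipation, whereas $\tilde{\alpha}_{\max}, \tilde{\beta}_{\max}$ enter through the coercivity of $\tilde{E}$. Because $b$ is no longer a scalar multiple of $a$, it must be retained as a separate object, and the two-sided bounds $\tilde{\beta}_{\min}\, a(w,w) \leq b(w,w) \leq \tilde{\beta}_{\max}\, a(w,w)$ must be applied only at the appropriate moments; time-independence of $\alpha,\beta$ is essential so that the cross-terms $(\alpha u_\delta, u_\delta')$ and $b(u_\delta, u_\delta')$ can be absorbed as exact time derivatives, producing no additional forcing on the right-hand side.
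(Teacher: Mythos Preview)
Your proposal is correct and follows essentially the same route as the paper: after the substitution $u_\delta = e^{\delta t}u$, test with $u_\delta'$, identify the modified energy (your $\tilde E(u_\delta)$ coincides with the paper's $\mathcal{\tilde E}_x(u_\delta)$ once the integrals are expanded), use $\tilde\alpha_{\min},\tilde\beta_{\min}$ for the dissipation bound and $\tilde\alpha_{\max},\tilde\beta_{\max}$ for the coercivity bound. The only cosmetic difference is that you name the weighted bilinear form $b(\cdot,\cdot)$, whereas the paper writes the corresponding integrals out explicitly.
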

	\begin{proof}
		If $\alpha$ and $\beta$ depends on the space variable $x$, then the variational formulation of equation \eqref{se1.1} is to seek $u : [0,\infty) \to H^1_0(\Omega)$ such that for all $v \in H^1_0(\Omega)$ and for any $t> 0$
		\begin{equation}\label{se5.76}
			\begin{aligned}
				\left(u'', v\right) + \left(\beta A^\half u', A^\half v\right) 
                %+ \left(\nabla_x\cdot \beta A^\half u', v\right)
                + \left(\alpha u', v\right) + a(u, v) = 0.
			\end{aligned}
		\end{equation}
		Choose $u = e^{-\delta t}u_\delta$ in \eqref{se5.76}, and rewrite it as
		\begin{equation*}\label{eq:5.38}
			\begin{aligned}
				\left(u''_\delta, v\right) &+ \left(\beta A^\half u'_\delta, A^\half v\right) + \left(\left(\alpha - 2\delta\right)u'_\delta, v\right) + \left(\left(1 - \beta\delta\right)A^\half u_\delta, A^\half v\right) 
				\\
				&\qquad\qquad\qquad\qquad
                %+ \left(\nabla_x\cdot\beta A^\half u'_\delta, v\right) %- \delta\left(\nabla_x\cdot\beta A^\half u_\delta, %v\right) 
                - \delta\left(\left(\alpha - \delta\right)u_\delta, v\right) = 0,
			\end{aligned}
\end{equation*}
with $ u_\delta(x,0) = u_0 \; \mbox{and} \; (u_\delta)'(x,0) = \delta u_0 + u_1.$ A choice of $v = u'_\delta$ in the above equation, yields
\begin{equation}\label{eqn:enginx}
			\begin{aligned}
			\frac{{\rm d}}{{\rm d}t}\mathcal{\tilde{E}}_x(u_\delta)(t) + \left(\tilde{\alpha}_{\min} + \tilde{\beta}_{\min}\lambda_1 - 2\delta\right)\|u'_\delta\|^2 \leq 0, 
			\end{aligned}
		\end{equation}
		where 
		\begin{equation*}
			\begin{aligned}
				\mathcal{\tilde{E}}_x(u_\delta)(t) = \half\left(\|u'_\delta\|^2 + \int_{\Omega}\left(1 - \beta\delta\right)\|A^\half u_\delta\|^2\,{\rm d}x - \delta\int_{\Omega} \left(\alpha - \delta\right)\|u_\delta\|^2\,{\rm d}x\right).
			\end{aligned}
		\end{equation*}
		Taking integration of \eqref{eqn:enginx} in time, we obtain
		\begin{equation*}
			\mathcal{\tilde{E}}_x(u_\delta)(t) \leq \mathcal{\tilde{E}}_x(u_\delta)(0)
		\end{equation*}
	provided $\delta \leq \frac{\tilde{\alpha}_{\min} + \tilde{\beta}_{\min}\lambda_1}{2}.$ 
    A use of \eqref{poincare} yields
    
	\begin{equation*}\label{eq:matex}
		\begin{aligned}
			\mathcal{\tilde{E}}_x(u_\delta)(t) \geq \half\left(\|u'_\delta\|^2 + \left(1 - \frac{\tilde{\alpha}_{\max} + \tilde{\beta}_{\max}\lambda_1}{\lambda_1}\delta\right)|u_\delta|^2_1 + \delta^2\|u_\delta\|^2\right)	
			> \gamma_0 E(u_\delta)(t),
		\end{aligned}
	\end{equation*}  
	provided $\delta < \gamma_0$, where $\gamma_0 = \frac{\lambda_1}{\tilde{\alpha}_{\max} + \tilde{\beta}_{\max}\lambda_1}$. This completes the proof.	
\end{proof}

\begin{thm}\label{thm:spahigeng}
	Under the assumption of Theorem ~\ref{thm:spacdep} and for $k = 2,3,\dots$. The following holds
	\begin{enumerate}[label=(\roman*)]
		\item if $\delta < \min\left(\frac{\tilde{\alpha}_{\min} + \tilde{\beta}_{\min}\lambda_1}{2}, \frac{\lambda_1}{\tilde{\alpha}_{\max} + \tilde{\beta}_{\max}\lambda_1}\right)$ or ~ $\delta = \frac{\tilde{\alpha}_{\min} + \tilde{\beta}_{\min}\lambda_1}{2}$, then 
		\begin{equation*}
			E(u^{(k)})(t) \leq Ce^{-2\delta t}E(u^{(k)})(0),
		\end{equation*}
		\item if $\delta = \frac{\lambda_1}{\tilde{\alpha}_{\max} + \tilde{\beta}_{\max}\lambda_1}$, then
		\begin{equation*}
			\|u^{(k)}\|^2 \leq C E(u^{(k)})(0).
		\end{equation*}
	\end{enumerate}
\end{thm}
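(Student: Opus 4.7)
The plan is to reduce Theorem \ref{thm:spahigeng} to Theorem \ref{thm:spacdep} by exploiting the crucial fact that the damping coefficients $\alpha(x)$ and $\beta(x)$ are independent of $t$. First, I would differentiate \eqref{eq:spadepeq} once in time, then iteratively a total of $k-1$ times. Since $\partial_t$ commutes with multiplication by $\alpha(x)$ and $\beta(x)$ and with the spatial operators $A^{1/2}$ and $A$, the function $w := u^{(k-1)}$ satisfies the structurally identical problem
\[
w'' + A^{1/2}\bigl(\beta(x) A^{1/2} w'\bigr) + \alpha(x) w' + A w = 0 \quad \text{in } \Omega\times(0,\infty),
\]
with initial data $w(0) = u^{(k-1)}(0)$ and $w'(0) = u^{(k)}(0)$, both of which are determined from $u_0$, $u_1$ by using the PDE repeatedly (assuming the initial data are regular enough for this to make sense).

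Next, I would apply Theorem \ref{thm:spacdep} directly to $w$. Part (i) gives
\[
E(w)(t) \leq C e^{-2\delta t} E(w)(0),
\]
which translates to the desired bound in part (i) of Theorem \ref{thm:spahigeng}, and part (ii) of Theorem \ref{thm:spacdep} yields the bound on $\|w'\|^2 = \|u^{(k)}\|^2$ stated in part (ii) for the borderline $\delta$. In other words, the whole argument is a clean reduction: the range of admissible $\delta$ is exactly the same as in Theorem \ref{thm:spacdep} because the coefficients have not changed, and the structural symmetry of the equation is preserved under time differentiation.

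Alternatively, one could inline the energy argument: set $w_\delta := e^{\delta t} w$, obtain the analog of \eqref{se5.76} for $w_\delta$, test with $w_\delta'$, and integrate in time. The computation would be a verbatim copy of the proof of Theorem \ref{thm:spacdep}, producing the same bilinear form $\mathcal{\tilde{E}}_x$ and the same threshold conditions on $\delta$. For an inductive presentation, one can alternatively reduce step-by-step from $k$ to $k-1$, each step invoking Theorem \ref{thm:spacdep} once.

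The only delicate point is justifying that $u$ is smooth enough in time to make $u^{(k-1)}$ and $u^{(k)}$ well-defined elements of $H^1_0(\Omega)$ and $L^2(\Omega)$ respectively, together with the corresponding identification of $w(0)$ and $w'(0)$ via the PDE. This requires assuming the initial data lie in sufficiently high powers of $D(A^{1/2})$ and $D(A)$, in the same spirit as the regularity hypothesis used in Theorem \ref{thm:a(t):1steng}. Beyond this bookkeeping, no new analytic ideas are needed, and the constants do not degenerate as $\tilde{\alpha}_{\min}$ or $\tilde{\beta}_{\min}$ tend to zero (provided the other is bounded away from $0$), inheriting this robustness from Theorem \ref{thm:spacdep}.
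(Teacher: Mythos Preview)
Your proposal is correct and matches the paper's proof essentially verbatim: the paper differentiates \eqref{eq:spadepeq} $(k-1)$ times in time, sets $w = u^{(k-1)}$, and then refers back to the argument of Theorem~\ref{thm:spacdep}. Your additional remarks on regularity of the initial data and on robustness of the constants are reasonable elaborations but go beyond what the paper records.
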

	
\begin{proof}
	On differentiating the equation \eqref{eq:spadepeq} $(k-1)$ times in time, we arrive at 
	\begin{equation*}
		u^{(k+1)} + A^{1/2}(\beta A^{1/2}u^{(k)}) + \alpha u^{(k)} + Au^{(k)} = 0.
	\end{equation*}
	Choose $u^{(k-1)} = w$ and proceeding in the same manner as in the proof of Theorem~\ref{thm:spacdep}, we obtain the desired results. This completes the proof.
\end{proof}	
For the two Subsections \ref{subse-time-varying}--\ref{subse-space-varying}, we refrain from the error analysis of finite element approximation as the proof goes in parallel to the proofs in Sections~\ref{sec:semd}--\ref{sec:fulld}.
\begin{Rem}
Our analysis will include the case when 
 $$A v=-\sum_{l,q=1}^{d} \frac{\partial}{ \partial x_l}( a_{lq}(x)\frac{\partial v}{\partial x_q})+a(x)v,   
$$ 
where the $d\times d$ matrix  $[a_{lq}(x)]$ is symmetric, uniformly  positive definite and $a\geq 0.$
\end{Rem}
\begin{Rem}
The present analysis can be easily extended to the problem \eqref{se1.1} when $A=\Delta^2$ with clamped, hinged, or free homogeneous boundary conditions.
\end{Rem}

\subsection{Abstract discrete problem}
Given two finite-dimensional spaces $V_h$ and $H_h $ with inner products and norms defined, respectively, as $(\cdot,\cdot)_{V_h}, (\cdot,\cdot)_{H_h}$ and $ \|\cdot\|_{V_h},\;\|\cdot\|_{H_h},$ such that 
$V_h$ is continuously imbedded in $H_h$ and the discrete symmetric bounded, positive definite linear operator $A_h:V_h\mapsto V_h$ and related discrete bilinear form $a_h(\cdot,\cdot): V_h\times V_h\mapsto R$ defined by $a_h(v_h,w_h)= (A_h v_h, w_h)_{H_h}= (A^{1/2}_h v_h,A^{1/2}_h w_h)_{H_h},$ the semidiscrete damped wave equation is to find $u_h(t) \in V_h$ for $t>0$ such that 
\begin{eqnarray}\label{eq:abstract}
    u_h'' + \beta A_h u_h' + \alpha u_h' + A_h u_h=0, \;\; t>0, 
\end{eqnarray}
with $u_h(0)= u_{0,h}\in V_h$ and $u_h'(0)= u_{1,h}\in H_h,$ where $u_h(0)$ and $u_h'(0)$ known functions. As long as the following discrete version of Poincare inequality 
\begin{equation*}\label{discrete:Poinacare}
    \|A^{1/2}_h v_h\|^2_{H_h} \geq \lambda \|v_h\|^2_{H_h}
\end{equation*}
is satisfied with $\lambda>0$ independent of the discretization parameter $h,$ it is possible to prove the decay estimates for the semidiscrete solution $u_h,$ where the decay estimates are uniform with respect to $h.$

\subsubsection{Finite Difference Schemes}
We assume that the domain $\Omega$ is the unit square in $\mathbb R^2$ and $A v=-\Delta v$ with homogeneous Dirichlet boundary condition. For the numerical solution of \eqref{se1.1} with $\Omega= (0,1)\times (0,1)$, we select a mesh of width $h=\frac{1}{M}$, where $M$ is a positive integer, and cover $\bar\Omega=~\Omega\cup\partial\Omega$ with a square grid of mesh points $x_{ij}=(ih,jh)$ for $i,~j = 0, 1, \ldots, M$. Let $\Omega_h=\{x_{ij}:~x_{ij} \in \partial \Omega\}$. We can cover the whole $\mathbb R^2$ with such a square grid, and will denote it by $\mathbb R_h^2.$

For a function $w$ defined on $\mathbb R_h^2$, we adopt the following notation: for $x~\in~\mathbb R_h^2$ and $l~=~1, 2$, we have
\begin{eqnarray}
\begin{aligned}
 w^{\pm l} = w(x \pm h {\bf e}_l),\; w^{+ l,-q}  =  w(x &+ h {\bf e}_l-h {\bf e}_q), \; 
 \nabla_l w(x) =  \frac{w(x+h{\bf e}_l)-w(x)}{h} \; \mbox{and} \nonumber\\
 \bar\nabla_l w(x)  &=  \frac{w(x)-w(x-h{\bf e}_l)}{h},\nonumber
 \end{aligned}
 \end{eqnarray}
 where ${\bf e}_l$ is the $l$th unit vector in $\mathbb R^2$.

  Let $\mathcal D_h$ denote the mesh functions defined on $\mathbb R_h^2$ that vanish outside of $\Omega_h$. For $u,~v\in~ \mathcal D$, we now introduce the discrete $L^2$-space, denoted by $L_h^2$ with inner product and norm given, respectively, by
  \begin{equation*}
   <w,v>  =  h^2 \sum_{x\in{\mathbb R}_h^2}w(x)v(x)\;\; \mbox{and}\;\;
   \|w\|_{0,h} = <w,w>^{\frac{1}{2}} .
  \end{equation*}
  Moreover, for $w~\in~ {\mathcal D}_h$, let $H_h^1$ denote the distance analogue of the $H_0^1$-Sobolev space with norm $\|w\|_{1,h}^2=\sum_{l=1}^2 \|\nabla_l w\|_{0,h}^2$. We also introduce the discrete $H^2\cap H_0^1$-Sobolev space with norm
\begin{equation}
 \|w\|_{2,h}^2=\|w\|_{1,h}^2+\sum_{l=1,q}^2 \|\nabla_l\bar\nabla_q \nonumber w\|_{0,h}^2,\,\,\,w\in{\mathcal D_h}
\end{equation}
and denote it by $H_h^2$. Whenever there is no confusion, we write $\|w\|$ and $\|w\|_j,~j~=~1,2$, instead of $\|w\|_{0,h}$ and $\|w\|_{j,h}.$ 

For functions $v$ and $w$ in ${\mathcal D}_h$, the following identities are easy consequences of summation by parts 
\begin{equation}\label{eq:discrete-summation}  
 <\nabla _l v,w>= -<v,\bar\nabla_l w>, \,\,\,l=1,2.
\end{equation}

Let $A_h$ denote the finite difference operator approximating $A$, which is defined for $v_h\in H^2_h$ by
\begin{equation*}
 A_h v_h=- \Delta_h v_h= -\sum_{l,q=1}^{d} \bar\nabla_l(\nabla_q v_h),
 %+\bar\nabla_l(a_{lq}(t)\bigtriangledown_q V)]+S(a(t))V,
\end{equation*}
and with $u_h(t)\in H_h^1,$ the semidiscrete finte difference formulation is given by \eqref{eq:abstract}, where $V_h=H^1_h$ and $H_h=L^2_h.$
Setting for $v_h, w_h \in H_h^1$, $a(v_h,w_h) = \sum_{l=1}^d <\nabla_l v_h, \nabla_l w_h>= <A_h v_h,w_h >$ by \eqref{eq:discrete-summation},
the following properties are valid.
\begin{itemize}
 \item[(i)]  $a(v_h,v_h)= <A_h v_h,v_h > =\|v_h\|^2_{H^1_h}.$
 \item[(ii)] 
 $|a(v_h,w_h)|= |<A_h(t)V,W>| \leq C \|v_h\|_{1,h}\|w_h\|_{1,h}.$
\end{itemize}
 Since $A_h$ is symmetric and positive definite, $A^{1/2}$ is well defined and 
 $$a(v_h,w_h) = <A_h^{1/2} v_h, A^{1/2} w_h> = <A_h v_h,w_h >$$ and $$a(v_h,v_h) = \|A^{1/2} v_h\|^2_{L^2_h}.$$
From the discrete version of Poinc\'are  inequality it follows that there exists a positive constant $\lambda>0$ independent of $h$ such that for $v\in H^1_h$, we have
$$ \lambda \|v_h\|^2~\leq \sum_{l=1}^{d}\|\nabla_l V\|^2 =\|A^{1/2} v_h\|_{0,h}^2.$$

\begin{Rem}
 This analysis in abstract discrete problem can be easily extended to spectral method applied to  the problem \eqref{se1.1}--\eqref{se1.3}.

 %Moreover, for symmetric interior penalty discontinuous Galerkin 
 %method, the present analysis can be extended
\end{Rem}

%%%%%%%%%%%%%%%%%%%%%%%%%%%%%%%%%%%
\section{Numerical Experiments}
\setcounter{equation}{0}
%%%%%%%%%%%%%%%%%%%%%%%%%%%%%%%%
This section focuses on some numerical experiments, whose results confirm our theoretical findings. We consider the following strongly damped wave equation:
\begin{equation*}
    u'' - \beta \, \Delta u' + \alpha \, u' - \Delta u = 0, \; (x, y) \in \Omega, \; t > 0.
\end{equation*}

In all Examples \ref{Exm1}-\ref{Exm3}, the equations are solved using our completely discrete scheme up to the final time $T = 1.0$ with time step $k = 2 h^2$ to minimize the effect of time discretization error. The numerical experiments are performed using FreeFem++ software with piecewise linear elements.

In each case, the experimental convergence rate of the error is computed using
\[
\mbox{Rate} = \frac{\log(E_{h_{i}})-\log (E_{h_{i+1}})}{\log (\frac{h_i}{h_{i+1}})},
\]
where $E_{h_{i}}$ denotes the error norm using $h_i$ as the spatial discretization parameter at the $i$th stage. 
\begin{Exm}\label{Exm1}
    For the first example, we consider $\alpha = \pi, \beta =  \displaystyle{\frac{1}{\pi}}$ and $\Omega = (0, 1) \times (0, 1)$ such that the exact solution is given by 
    $$
    u(x, y, t) = e^{- \pi t} \sin (\pi x) \sin(\pi y).
    $$
    The corresponding initial conditions are
    \beas 
    u(x, y, 0) = \sin (\pi x) \sin(\pi y), \; u'(x, y, 0) = - \pi \sin (\pi x) \sin(\pi y), \; (x, y) \in \Omega,
    \eeas
    and the boundary condition is
    \beas
    u = 0, \; (x, y) \in \partial \Omega, \; t > 0.
    \eeas
%Choose $\alpha$ and $\beta$ such that the exact solution is given by
\end{Exm}
%{\bf Example 1.} 
%In the Table 1, we discuss the errors and rate of convergence in $L^2, \; L^{\infty}$ and $H^1$- norms and it is observed that convergence rates confirm the theoretical convergence rate. \\
Table \ref{tab:ex1} shows the error and rate of convergence in $L^2, \; L^{\infty}$ and $H^1$-norms, confirming our theoretical findings. 

\begin{table}[h!]
    \centering
    \begin{tabular}{|c|cccccc|}
    \hline
         & & $\alpha = \pi$  && $\beta =  \displaystyle{\frac{1}{\pi}}$ && \\ \cline{2-7}
		N & $\|u-U\|_{L^{2}}$ &Rate & $\|u-U\|_{L^{\infty}}$  &  Rate&$\|u-U\|_{H^{1}}$&Rate
		\\  \hline
		5 & 8.8349(-3)& &          7.1418(-3)  &       & 6.4637(-2)&  \\
		10 & 2.1124(-3)& 2.0643 &  1.7120(-3) & 2.0606 & 2.5999(-2)& 1.3139 \\
		15 & 9.5284(-4)& 1.9635 & 7.7232(-4) & 1.9632 & 1.7282(-2)& 1.0072  \\
		20 & 5.3192(-4)& 2.0264 & 4.3115(-4) & 2.0263 & 1.2763(-2)& 1.0538 \\ 
		25 & 3.4187(-4)& 1.9810 & 2.7711(-4) & 1.9810 & 1.0273(-2)& 0.9727  \\
		30 & 2.3671(-4)& 2.0163 & 1.9187(-4) & 2.0163 & 8.5325(-3)& 1.0180  \\
		\hline
    \end{tabular}
    \caption{Example \ref{Exm1}:  $L^2$, $L^{\infty}$, $H^1$ errors and convergence rates for $U$.}
    \label{tab:ex1}
\end{table}
In Figure \ref{ex1-f1}, we show the plots of discrete energy $E_h(u_h)$ and continuous energy $E(u)$. In Figure \ref{ex1-f2}(a), we observe the decay behaviour of discrete energies, and Figure \ref{ex1-f2}(b) shows the numerically computed decay rates.  

\begin{figure}[h!]
\centering
  \includegraphics[width=16.2cm,height=5.8cm]{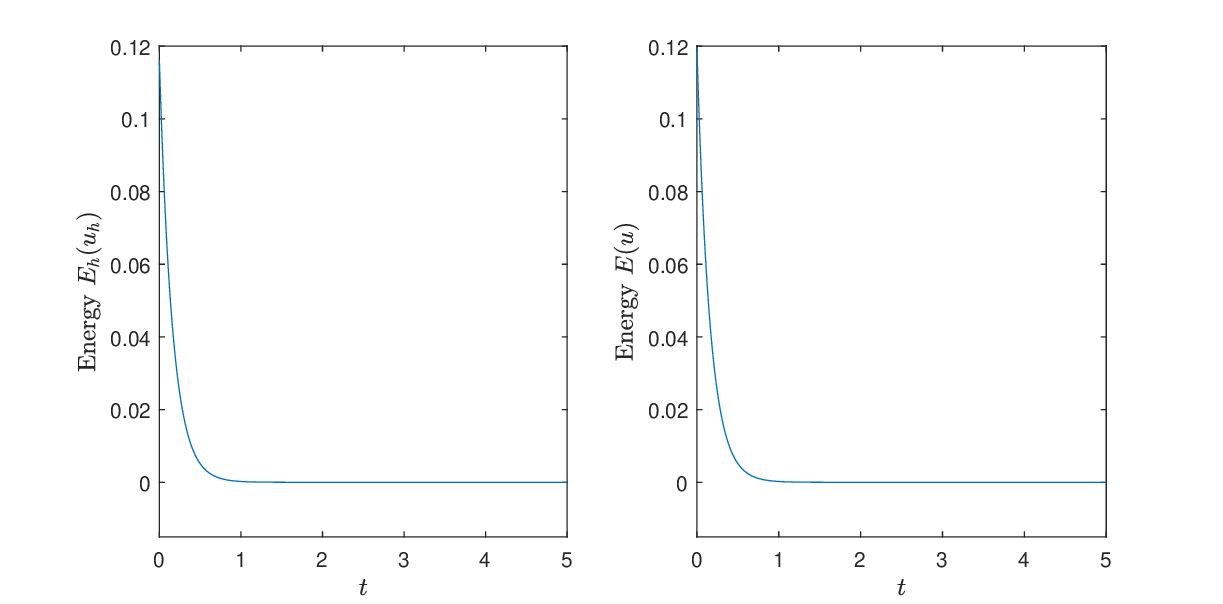}
  \caption{Example \ref{Exm1}: The plots of discrete energy $E_h(u_h)$ and continuous energy $E(u)$.}
  \label{ex1-f1}
\end{figure}

\begin{figure}[h!]
\centering
\begin{subfigure}{.45\textwidth}
  \centering
  \includegraphics[width=7.3cm, height=5.8cm]{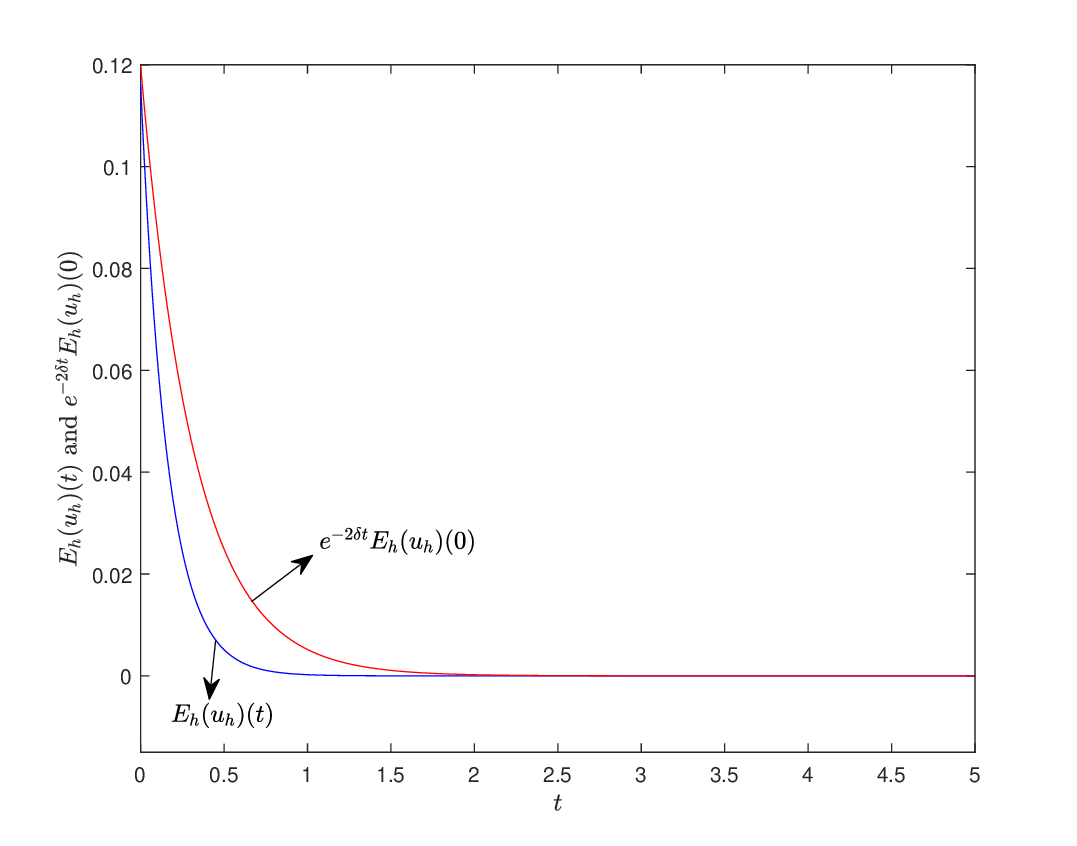}
  \caption{The decay behaviour of discrete Energies}
  \label{fig:Ex1.1}
\end{subfigure}
\begin{subfigure}{.45\textwidth}
  \centering
  \includegraphics[width=7.3cm, height=5.8cm]{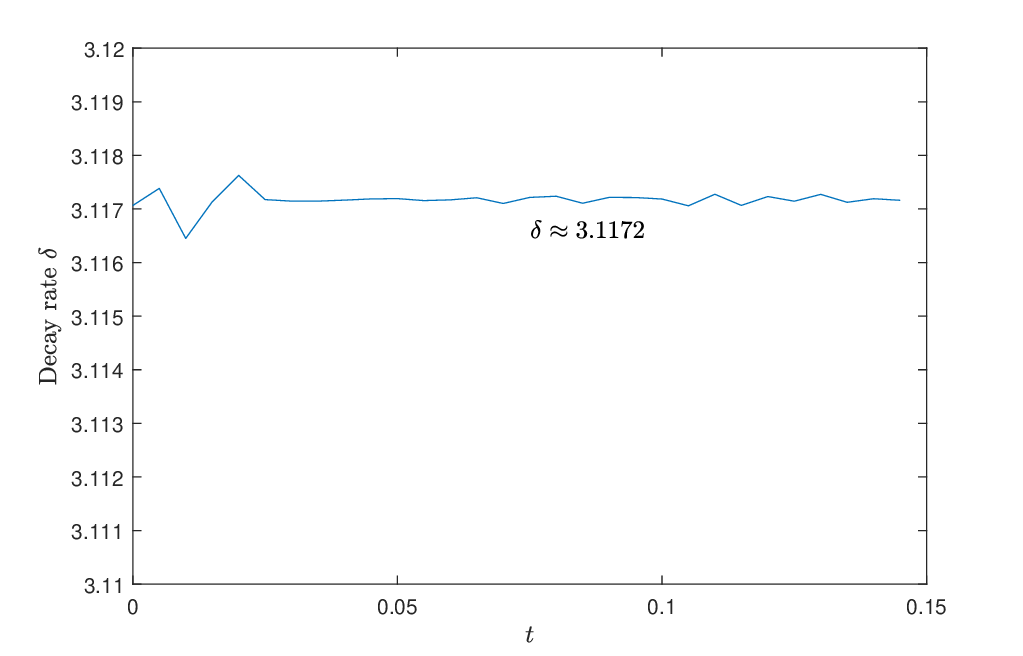}
  \caption{Numerically computed decay rate $\delta$ }
  \label{fig:Ex1.2}
\end{subfigure}
\caption{Example \ref{Exm1}: Discrete energy decay profiles and estimate of decay rate.  }
\label{ex1-f2}
\end{figure}

%%%%%%%%%%%%%%%%%%%%%%%%%%%%%%%%%%%%%%%%%%%%%%%%%%%%%%%%%%%%%%%%%%%%%
%%%%%%%%%%%%%%%%%%%%%%%%%%%%%%%%%%%%%%%%%%%%%%%%%%%%%%%%%%%%%%%%%%%%%

\begin{Exm}\label{Exm2}
    For the second example, take $\alpha = \displaystyle{\frac{\left(\pi^2 + 4 \right)}{2\pi}}, \; \beta =  \displaystyle{\frac{\pi}{4}}$ and $\Omega = (0, \pi) \times (0, \pi)$. The corresponding exact solution is 
    $$
    u(x, y, t) = e^{-\pi t} \sin (x) \sin(y).
    $$
    Here, the initial conditions are
    \beas
    u(x, y, 0) = \sin (x) \sin(y), \; u'(x, y, 0) = - \pi \sin (x) \sin(y), \; (x, y) \in \Omega 
    \eeas
    and the boundary condition is
    \beas
    u = 0, \; (x, y) \in \partial \Omega, \; t > 0.
    \eeas
\end{Exm}

In Table \ref{tab:ex2}, we show the errors and rate of convergences in $L^2, \; L^{\infty}$ and $H^1$-norms that validate our theoretical results established in the previous sections. \\
%\vspace{2cm}
\begin{table}[h!]
    \centering
    \begin{tabular}{|c|cccccc|}
    \hline
    & & $\alpha = \displaystyle{\frac{\left(\pi^2 + 4 \right)}{2\pi}}$  && $\beta =  \displaystyle{\frac{\pi}{4}}$ && \\ \cline{2-7}
		$N$ & $\|u-U\|_{L^{2}}$ & Rate & $\|u-U\|_{L^{\infty}}$  &  Rate & $\|u-U\|_{H^{1}}$ & Rate
		\\ \hline
		5 & 5.1151(-2)& &          1.3003(-1)  &       & 1.0401(-1)&  \\
		10 & 1.3260(-2)& 1.9477 &  3.3688(-2) & 1.9485  & 3.4207(-2)& 1.6044 \\
		15 & 6.0028(-3)& 1.9546 & 1.5239(-2) & 1.9564 & 2.0064(-2)& 1.3158  \\
		20 & 3.3779(-3)& 1.9986 & 8.5727(-3) & 1.9998 & 1.4008(-2)& 1.2490 \\ 
		25 & 2.1703(-3)& 1.9826 & 5.5071(-3) & 1.9833 & 1.0926(-2)& 1.1136  \\
		30 & 1.5065(-3)& 2.0022 & 3.8225(-3) & 2.0026 & 8.9166(-3)& 1.1145  \\
		\hline
    \end{tabular}
    \caption{Example \ref{Exm2}:  $L^2$, $L^{\infty}$, $H^1$ errors and convergence rates for $U$.}
    \label{tab:ex2}
\end{table}
In Figure \ref{ex2-f1}, we show the plots of discrete energy $E_h(u_h)$ and continuous energy $E(u)$. From Figure \ref{ex2-f2}(a), we observe the decay behaviour of discrete energies and the numerically computed decay rate in Figure \ref{ex2-f2}(b). 

\begin{figure}[h!]
\centering
  \includegraphics[width=16.2cm,height=5.8cm]{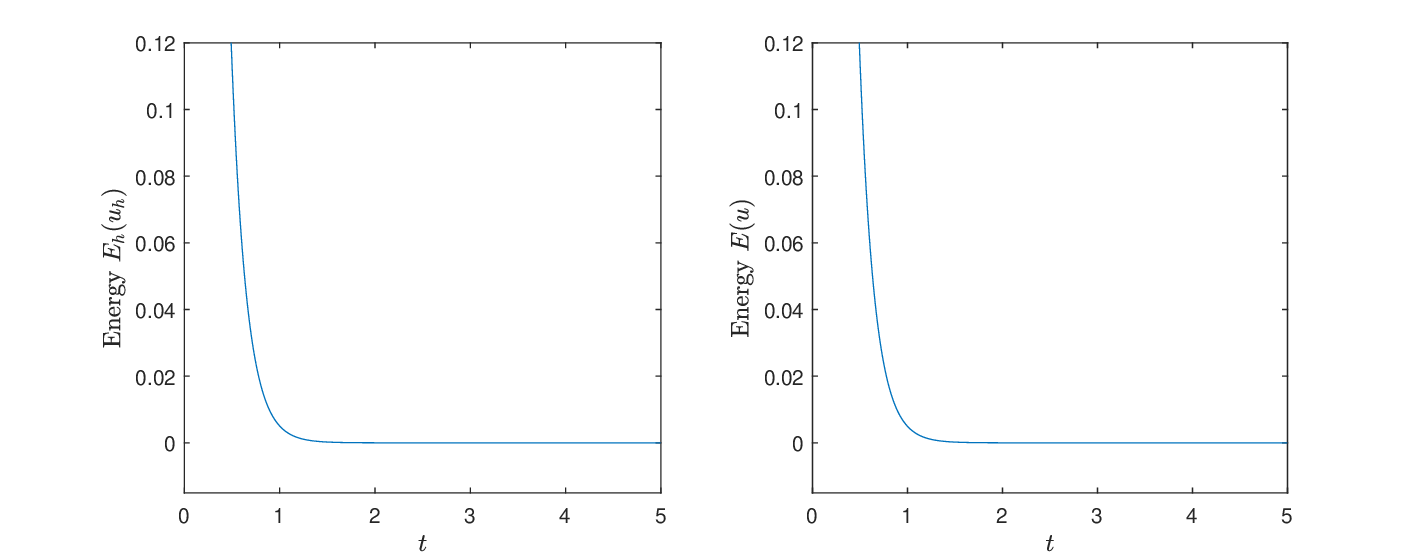}
  \caption{Example \ref{Exm2}: The plots of discrete energy $E_h(u_h)$ and continuous energy $E(u)$.}
  \label{ex2-f1}
\end{figure}

\begin{figure}[h!]
\centering
\begin{subfigure}{.45\textwidth}
  \centering
  \includegraphics[width=7.3cm, height=5.8cm]{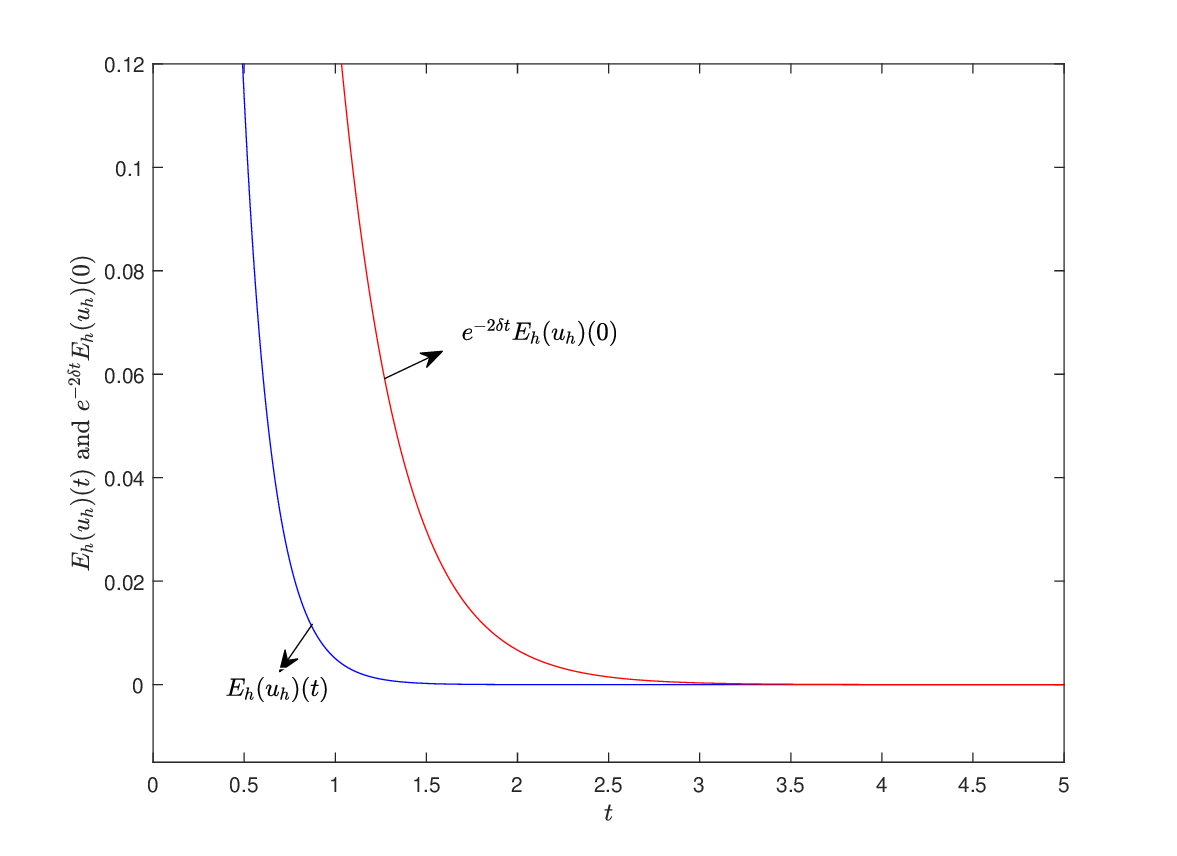}
  \caption{The decay behaviour of discrete Energies}
  \label{fig:Ex2.1}
\end{subfigure}
\begin{subfigure}{.45\textwidth}
  \centering
  \includegraphics[width=7.3cm, height=5.8cm]{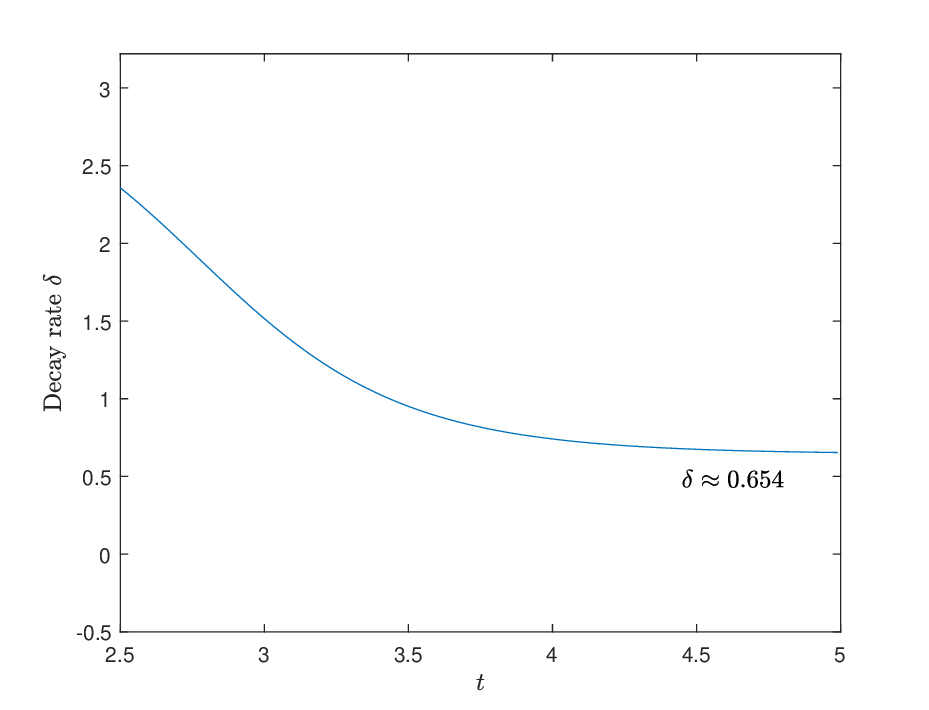}
  \caption{Numerically computed decay rate $\delta$ }
  \label{fig:Ex2.2}
\end{subfigure}
\caption{Example \ref{Exm2}: Discrete energy decay profiles and estimate of decay rate.  }
\label{ex2-f2}
\end{figure}

%%%%%%%%%%%%%%%%%%%%%%%%%%%%%%%%%%%%%%%%%%%%%%%%%%%%%%%%%%%%%%%%%%%%%
%%%%%%%%%%%%%%%%%%%%%%%%%%%%%%%%%%%%%%%%%%%%%%%%%%%%%%%%%%%%%%%%%%%%%
\noindent
In the next example, we discuss the errors and convergence rates for the two cases: 
\begin{enumerate}
	\item[(i)] when $\alpha \neq 0$ and $\beta = 0$, 
	\item[(ii)] when $\alpha = 0$ and $\beta \neq 0$.
\end{enumerate} 
\begin{Exm}\label{Exm3}
    The third example has $\Omega = (0, \pi) \times (0, \pi)$ and for the $\alpha,\beta$ the following two cases: 
    \begin{enumerate}[label=(\roman*)]
	\item $\alpha = \displaystyle{\frac{\left(\pi^2 + 2 \right)}        {\pi}}$ and $\beta = 0$. 
	\item $\alpha = 0$ and $\beta = \displaystyle{\frac{\left(\pi^2     + 2 \right)}{2\pi}}$.
    \end{enumerate} 
    For both cases, the exact solution is
    $$
    u(x, y, t) = e^{-\pi t} \sin ( x) \sin( y).
    $$
    The boundary condition is
    \beas
    u = 0, \; (x, y) \in \partial \Omega, \; t > 0,
    \eeas
    and the initial conditions are
    \beas
    u(x, y, 0) = \sin ( x) \sin( y), \quad u'(x, y, 0) = -  \pi \sin ( x) \sin( y), \; (x, y) \in \Omega. 
    \eeas
\end{Exm}

In Table \ref{tab:exm31}(i), we show the errors and convergence rates in the $L^2, \; L^{\infty}$ and $H^1$ norms, which validates the theoretical findings established in the previous sections. \\

\begin{table}[h!]
    \centering
    \begin{tabular}{|c|cccccc|}
         \hline
         & & $\alpha = \displaystyle{\frac{\left(\pi^2 + 2 \right)}{\pi}}$  && $\beta = 0$ && \\ \cline{2-7}
		$N$ & $\|u-U\|_{L^{2}}$ & Rate & $\|u-U\|_{L^{\infty}}$  &  Rate & $\|u-U\|_{H^{1}}$ & Rate
		\\ \hline
		5 & 1.1860(-2)& &          2.8532(-2)  &       & 5.1691(-2)&  \\
		10 & 2.3853(-3)& 2.3139 &  5.7294(-3) & 2.3162  & 2.3811(-2)& 1.1183 \\
		15 & 1.0850(-3)& 1.9429 & 2.6289(-3) & 1.9214 & 1.6598(-2)& 0.8901  \\
		20 & 5.9501(-4)& 2.0882 & 1.4474(-3) & 2.0745 & 1.2468(-2)& 0.9945 \\ 
		25 & 3.8420(-4)& 1.9603 & 9.3636(-4) & 1.9518 & 1.0120(-2)& 0.9349  \\
		30 & 2.6438(-4)& 2.0499 & 6.4515(-4) & 2.0432 & 8.4439(-3)& 0.9933  \\
		\hline
    \end{tabular}
    \caption{Example \ref{Exm3}$(i)$:  $L^2$, $L^{\infty}$, $H^1$ errors and convergence rates for $U$.}
    \label{tab:exm31}
\end{table}

\begin{figure}[h!]
\centering
  \includegraphics[width=16.2cm, height=5.8cm]{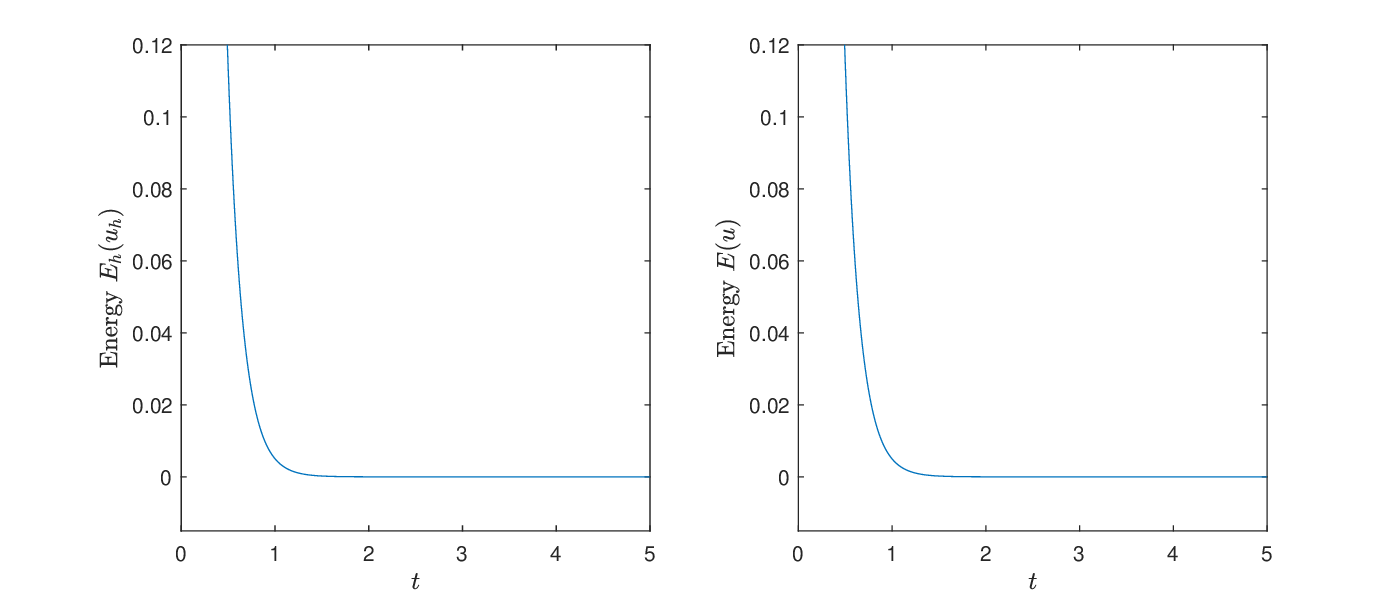}
  \caption{Example \ref{Exm3}$(i)$: The plots of discrete energy $E_h(u_h)$ and continuous energy $E(u)$.}
  \label{ex3-f1}
\end{figure}

\begin{figure}[h!]
\centering
\begin{subfigure}{.45\textwidth}
  \centering
  \includegraphics[width=7.3cm, height=5.8cm]{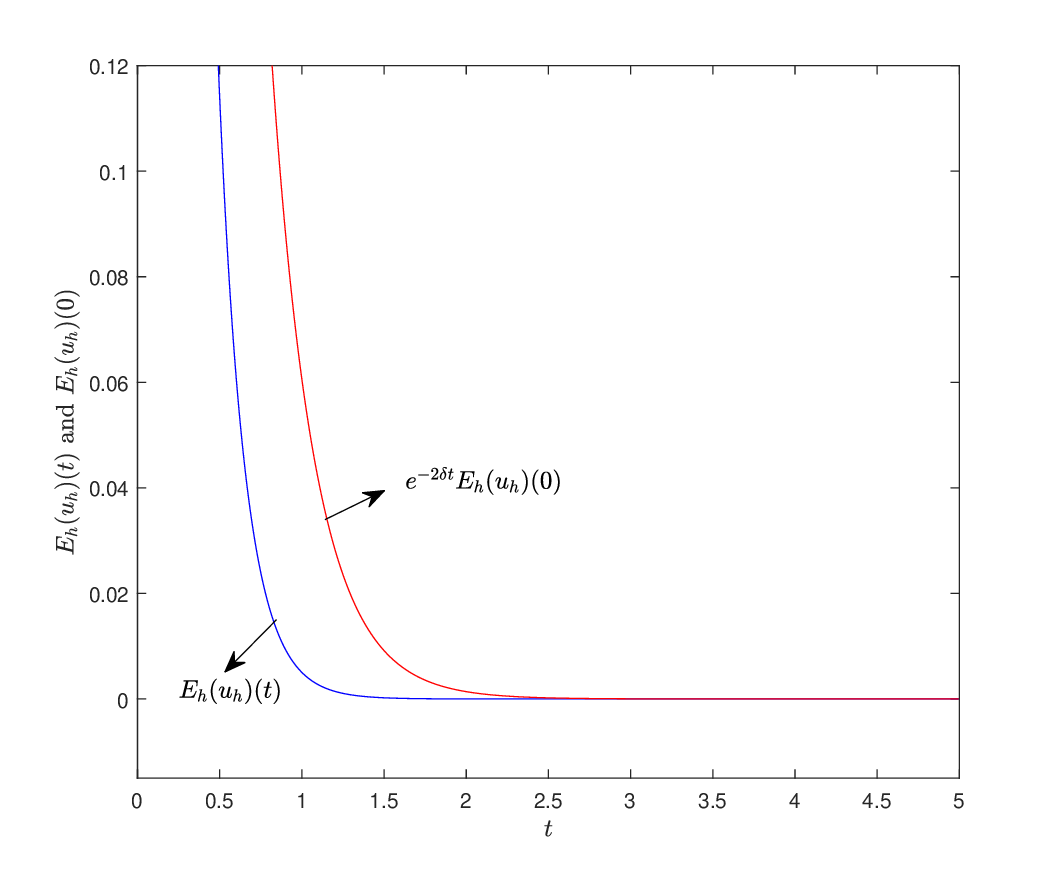}
  \caption{The decay behaviour of discrete Energies}
  \label{fig:Ex3.1}
\end{subfigure}
\begin{subfigure}{.45\textwidth}
  \centering
  \includegraphics[width=7.3cm, height=5.8cm]{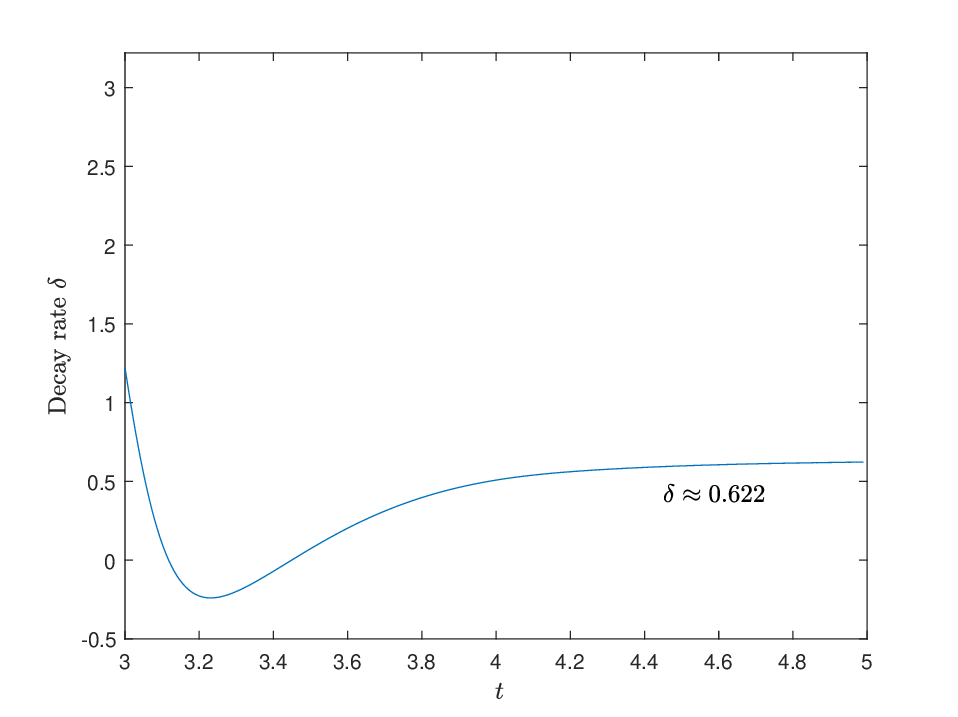}
  \caption{Numerically computed decay rate $\delta$ }
  \label{fig:Ex3.2}
\end{subfigure}
\caption{Example \ref{Exm3}$(i)$: Discrete energy decay profiles and estimate of decay rate for $\alpha \neq 0$ and $\beta = 0$.}
\label{ex3-f2}
\end{figure}

%%%%%%%%%%%%%%%%%%%%%%%%%%%%%%%%%%%%%%%%%%%%%%%%%%%%%%%%%%%%%%%%%%%%
%%%%%%%%%%%%%%%%%%%%%%%%%%%%%%%%%%%%%%%%%%%%%%%%%%%%%%%%%%%%%%%%%%%%

\begin{table}[h!]
    \centering
    \begin{tabular}{|c|cccccc|}
         \hline
	   & & $\alpha = 0$  && $\beta = \displaystyle{\frac{\left(\pi^2    + 2 \right)}{2\pi}}$ && \\ \cline{2-7}
		$N$ & $\|u-U\|_{L^{2}}$ & Rate & $\|u-U\|_{L^{\infty}}$  &    Rate & $\|u-U\|_{H^{1}}$ & Rate
		\\ \hline
		5 & 1.0330(-1)& &          2.6260(-1)  &       &              1.8845(-1)&  \\
		10 & 2.8634(-2)& 1.8511 &  7.2732(-2) & 1.8522  &             5.6827(-2)& 1.7295 \\
		15 & 1.3045(-2)& 1.9390 & 3.3108(-2) & 1.9410 & 2.8944(-2)&   1.6639  \\
		20 & 7.3836(-3)& 1.9784 & 1.8733(-2) & 1.9796 & 1.8353(-2)&   1.5836 \\ 
		25 & 4.7464(-3)& 1.9802 & 1.2040(-2) & 1.9809 & 1.3331(-2)&   1.4328  \\
		30 & 3.3007(-3)& 1.9925 & 8.3719(-3) & 1.9930 & 1.0382(-2)&   1.3710  \\
		\hline
    \end{tabular}
    \caption{Example \ref{Exm3}$(ii)$:  $L^2$, $L^{\infty}$, $H^1$ errors and convergence rates for $U$.}
    \label{tab:exm32}
\end{table}

\begin{figure}[h!]
\centering
  \includegraphics[width=16.2cm, height=5.3cm]{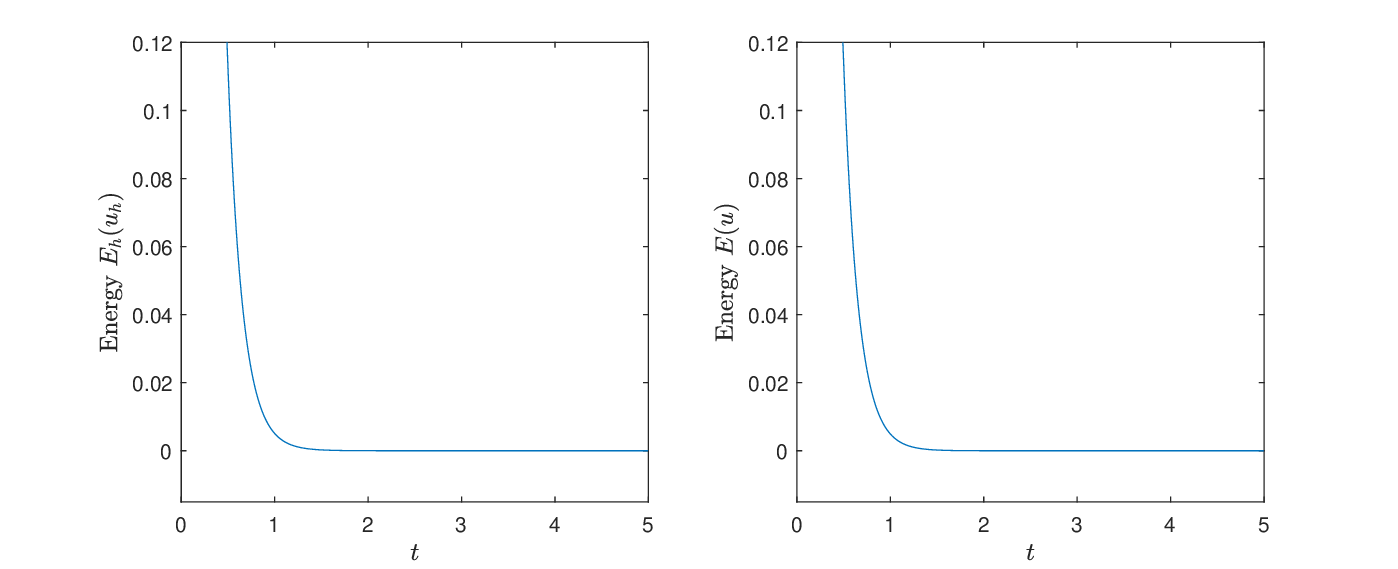}
  \caption{Example \ref{Exm3}$(ii)$: The plots of discrete energy $E_h(u_h)$ and continuous energy $E(u)$.}
  \label{ex4-f1}
\end{figure}

\begin{figure}[h!]
\centering
\begin{subfigure}{.45\textwidth}
  \centering
  \includegraphics[width=6.9cm, height=5.3cm]{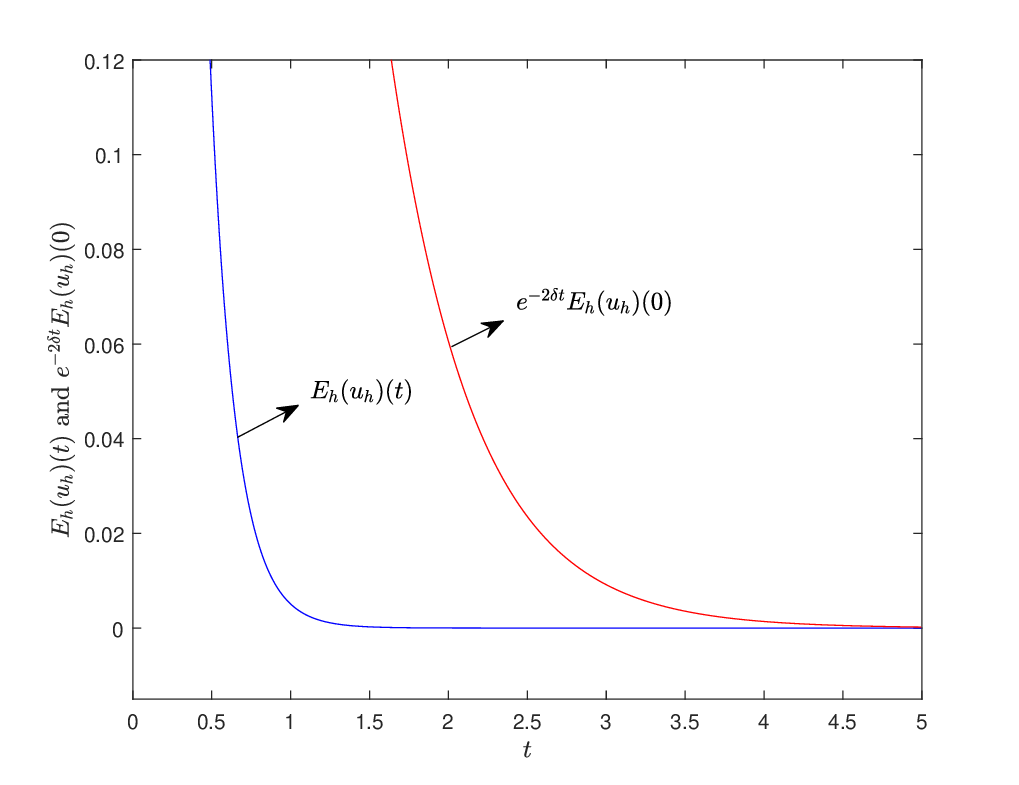}
  \caption{The decay behaviour of discrete Energies}
  \label{fig:Ex4.1}
\end{subfigure}
\begin{subfigure}{.45\textwidth}
  \centering
  \includegraphics[width=6.9cm, height=5.3cm]{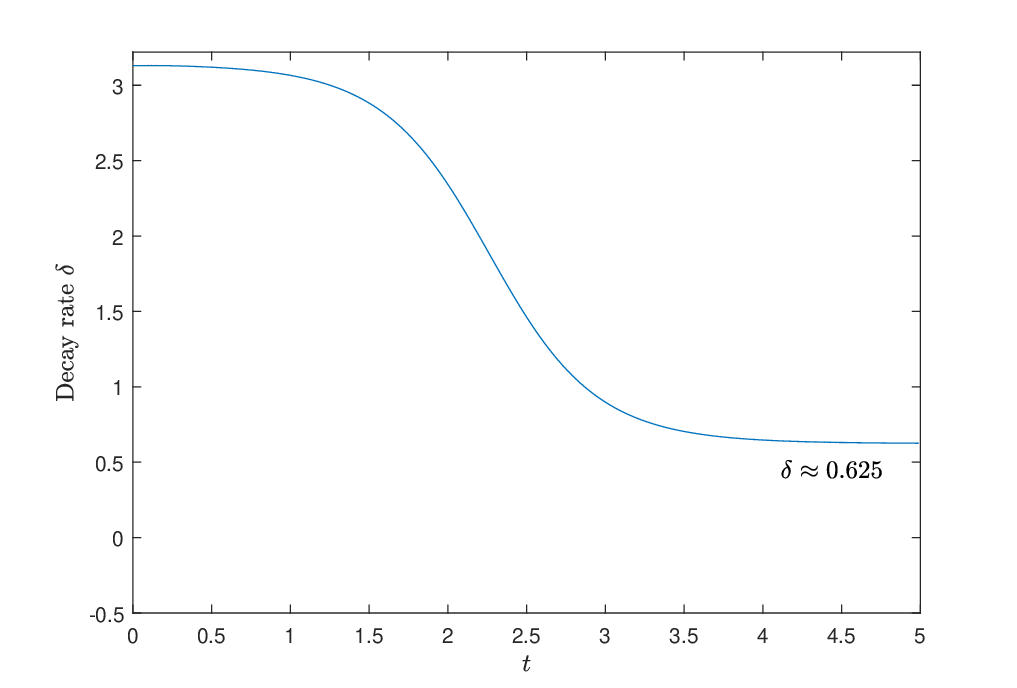}
  \caption{Numerically computed decay rate $\delta$ }
  \label{fig:Ex4.2}
\end{subfigure}
\caption{Example \ref{Exm3}$(ii)$: Discrete energy decay profiles and estimate of decay rate for $\alpha = 0$ and $\beta \neq 0$.  }
\label{ex4-f2}
\end{figure}

\textbf{Observations:} Based on the numerical experiments, we have the following observations.
\begin{enumerate}[label=(\roman*)]
    \item From Tables \ref{tab:ex1}-\ref{tab:exm32} one can observe that by choosing different values of the $\alpha$ and $\beta$, we obtain the order of convergence $2$ for the $L^2$-norm and $1$ for the $H^1$-norm for linear polynomials which confirms  our results in Theorems \ref{thm:H1est}-\ref{thm:L2est}.
    
    \item We present graphs of the energies obtained from the discrete solution, see the Figures \ref{ex1-f1} (Left), \ref{ex2-f1} (Left), \ref{ex3-f1} (Left) and \ref{ex4-f1} (Left) and the exact solution, see the Figures \ref{ex1-f1} (Right), \ref{ex2-f1} (Right), \ref{ex3-f1} (Right) and \ref{ex4-f1} (Right) by taking the different values of $\alpha$ and $\beta$ and degree of polynomial is $1$. It is observed that the continuous energy and discrete energy decay exponentially, and this validates Theorems \ref{thm:eng} and \ref{thm:diseng}.
    
    \item In Figures \ref{ex1-f2}(Left), \ref{ex2-f2}(Left), \ref{ex3-f2}(Left) and \ref{ex4-f2}(Left), we compute the discrete energy for the different time and exponential power minus two delta times into the discrete initial energy for different times, which validates our findings in  Theorem \ref{thm:fulene}.
\end{enumerate}

\section{Conclusion}
This paper deals with a detailed, mathematically rigorous study of semidiscrete and completely discrete approximations for damped wave equations involving weak and strong damping parameters, which preserve uniform exponential decay and optimal error estimates. The analysis involves energy-based techniques to establish consistency between continuous and discrete decay rates and explicit rates that depend on damping parameters and the principal eigenvalue of the associated linear elliptic operator. The rigorous statements and careful estimates significantly improve our results. Including multiple scenarios, such as space, time-dependent damping parameters, and nonhomogeneous source terms, is a major highlight of our approach. The set of numerical experiments, the results of which confirm our theoretical findings, is presented. The generalizations given in the Section $5$ include the problem with nonhomogeneous forcing function, with time and space varying damping coefficients, vibration of beam with damping and an abstract discrete problem with application to both finite difference and spectral approximations showing uniform decay behavior.

\section*{Acknowledgments}
 The authors acknowledge  the financial support by SERB, Govt. India,  via the Project No. CRG/2023/000721.

%%%%%%%%%%%%%%%%%%%%%%%%%
%%%%%%%%%%% BIBLIOGRAPHY %%%%%%%%%%%%%%%%%%%%%%%%%%%%%%%%%%%%%%%%%%%%%
\addcontentsline{toc}{section}{References}
\bibliographystyle{plain}
\bibliography{references-strongly-damped}
%%%%%%%%%%%%%%%%%%%%%%%%%%%%%%%%%%%%%%%%%%%%%
		
\end{document}